\chardef\bslash=`\\ 
\newtheorem{thm}{Theorem}[section]
\newtheorem{cor}[thm]{Corollary}
\newtheorem{lem}[thm]{Lemma}
\newtheorem{prop}[thm]{Proposition}
\newtheorem{conjecture}[thm]{Conjecture}
\newtheorem{example}[thm]{Example}
\theoremstyle{definition}
\newtheorem{defn}{Definition}[section]
\newtheorem{rem}{Remark}[section]
\theoremstyle{remark}
\newcommand{\eval}[2][\right]{\relax
  \ifx#1\right\relax \left.\fi#2#1\rvert}
\begin{document}
\title[Existence of Invariant Norms]{Existence of Invariant Norms in $p$-adic Representations of $GL_{2}(F)$ of Large Weights\thanks{Version 1.0, 2017/05/29}}
\author[E. Assaf]{Eran Assaf}
\address{Hebrew University of Jerusalem\\
Israel}
\email{eran.assaf@mail.huji.ac.il}

\begin{abstract}
In \cite{breuil2007first} Breuil and Schneider formulated a conjecture
on the equivalence of the existence of invariant norms on certain
$p$-adically locally algebraic representations of $GL_{n}(F)$ and
the existence of certain de-Rham representations of $Gal(\overline{F}/F)$,
where $F$ is a finite extension of $\mathbb{Q}_{p}$. In \cite{breuil2003quelques2,de2013existence}
Breuil and de Ieso proved that in the case $n=2$ and under some restrictions,
the existence of certain admissible filtrations on the $\phi$-module
associated to the two-dimensional de-Rham representation of $Gal(\overline{F}/F)$
implies the existence of invariant norms on the corresponding locally
algebraic representation of $GL_{2}(F)$. In \cite{breuil2003quelques2,de2013existence},
there is a significant restriction on the weight - it must be small
enough. In \cite{caraiani2013patching} the conjecture is proved
in greater generality, but the weights are still restricted to the
extended Fontaine-Laffaille range. In this paper we prove that in
the case $n=2$, even with larger weights, under some restrictions,
the existence of certain admissible filtrations implies the existence
of invariant norms.
\end{abstract}
\maketitle
\tableofcontents

\section{Introduction, Notation and Main Results}

\subsection{Introduction}

Let $p$ be a prime number. Let $F$ be a finite extension of $\mathbb{Q}_{p}$,
and let $C$ be a finite extension of $\mathbb{Q}_{p}$ which is ``large
enough'' in a precise way to be defined in Section 2. This paper
lies in the framework of the $p$-adic local Langlands programme,
whose goal is to associate to certain $n$-dimensional continuous
$p$ -adic representations of $Gal(\overline{F}/F)$, certain representations
of $G=GL_{n}(F)$.

If $F=\mathbb{Q}_{p}$ and $n=2$, then this is essentially well understood
- one has a correspondence $V\mapsto\Pi(V)$ (\cite{colmez2010representations},\cite{pavskunas2013image},\cite{colmez2013p})
associating to a $2$-dimensional $C$-representation $V$ of $Gal(\overline{\mathbb{Q}}_{p}/\mathbb{Q}_{p})$,
a unitary admissible representation of $GL_{2}(\mathbb{Q}_{p})$.
This correspondence is compatible with the classical local Langlands
correspondence and with completed étale cohomology (\cite{emerton2010local}).

Other cases seem somewhat more delicate. In particular, Breuil and
Schneider have formulated in \cite{breuil2007first} a conjecture,
generalizing a previous conjecture of Schneider and Teitelbaum \cite{schneider2006banach},
which reveals a deep connection between the category of $n$-dimensional
continuous de-Rham representations of $Gal(\overline{F}/F)$, and
certain locally algebraic representations of $GL_{n}(F)$.

By the theory of Colmez and Fontaine (\cite{colmez2000construction}),
one knows that a de-Rham representation of $Gal(\overline{F}/F)$,
$V$, is equivalent to a vector space, $D=D_{dR}(V)$, equipped with
an action of the Weil-Deligne group of $F$ and a filtration, such
that the filtration and the action satisfy a certain relation called
\emph{weak admissibility}. To this object, called the filtered $(\phi,N)$-module
attached to $V$, one can associate a smooth representation $\pi$
of $GL_{n}(F)$ by a slight modification of the classical local Langlands
correspondence (\cite{breuil2007first}, p. 16-17). On the other
hand, the Hodge-Tate weights of the filtration give rise to an irreducible
algebraic representation of $GL_{n}(F)$, which we denote by $\rho$.
The Breuil-Schneider conjecture essentially says that the existence
of a weakly admissible filtration on $D$ must be equivalent to the
existence of a $GL_{n}(F)$-invariant norm on the locally algebraic
representation $\rho\otimes\pi$. We mention that partial results,
in this generality, have been obtained by Hu (\cite{hu2009normes}),
who proved that the existence of an invariant norm on $\rho\otimes\pi$
implies the existence of a weakly admissinble filtration on $D$,
and Sorensen (\cite{sorensen2013proof}), who proved the equivalence
when $\pi$ is essentially discrete series.

In this paper we consider the particular case where $n=2$, and the
representation of the Galois group is crystalline.

Let $D$ be a $\phi$-module of rank $2$ over $F\otimes_{\mathbb{Q}_{p}}C$,
equipped with a weakly admissible filtration. Imposing some additional
technical restrictions on the weights of the filtration and on the
smooth part, we show in this paper that the locally algebraic representation
$\Pi(D)$ associated to $D$ according to the above process admits
a $G$-invariant norm. The methods we employ in order to prove this
result are well-known and were previously employed by Breuil (\cite{breuil2003quelques2})
and de Ieso (\cite{de2013existence}). The novelty of this paper
is the extension of these methods to larger weights, even though this
is accompanied by a substantial restriction on the smooth representation,
$\pi$.

We remark that in \cite{caraiani2013patching}, the authors have
proved many cases of the conjecture formulated by Breuil and Schneider,
using global methods. However, the results we obtain in this paper
are not included in their work, as they restrict the weights to be
in the extended Fontaine-Laffaille range, which, for $n=2$, means
that the weight is small.

\subsection{Notation}

Let $p$ be a prime number. Fix an algebraic closure $\overline{\mathbb{Q}}_{p}$
of $\mathbb{Q}_{p}$, and a finite extension $F$ of $\mathbb{Q}_{p}$,
contained in $\overline{\mathbb{Q}}_{p}$. Denote by $\mathcal{O}_{F}$
the ring of integers of $F$, by $\mathfrak{p}_{F}$ its maximal ideal,
and by $\kappa_{F}=\mathcal{O}_{F}/\mathfrak{p}_{F}$ its residue
field. We also fix a uniformizer $\varpi=\varpi_{F}\in\mathfrak{p}_{F}$.

Denote by $C$ a finite extension of $\mathbb{Q}_{p}$ satisfying
$\left|S\right|=[F:\mathbb{Q}_{p}]$, where $S:=Hom_{alg}(F,C)$,
and containing a square root of $\sigma(\varpi)$ for every $\sigma\in S$.

Denote by $\mathcal{O}_{C}$ the ring of integers of $C$, by $\mathfrak{p}_{C}$
its maximal ideal, and by $\kappa_{C}=\mathcal{O}_{C}/\mathfrak{p}_{C}$
its residue field. We also fix a uniformizer $\varpi=\varpi_{C}\in\mathfrak{p}_{C}$.

We denote $f=[\kappa_{F}:\mathbb{F}_{p}]$, $q=p^{f}$ the size of
the residue field, and by $e$ we denote the ramification index of
$F$ over $\mathbb{Q}_{p}$, so that $[F:\mathbb{Q}_{p}]=ef$ and
$\kappa_{F}\simeq\mathbb{F}_{q}$. We denote by $F_{0}=\mbox{Frac}(W(\kappa_{F}))$
the maximal unramified subfield of $F$, and by $\varphi_{0}$ the
absolute Frobenius of degree $p$ in $Gal(F_{0}/\mathbb{Q}_{p})$.
We denote by $Gal(\overline{F}/F)$ the Galois group of $F$ and by
$W(\overline{F}/F)$ its Weil group. Class field theory gives rise
to a homomorphism $\mbox{rec}:W(\overline{F}/F)^{ab}\rightarrow F^{\times}$
(Artin reciprocity map) which we normalize by sending the coset of
the arithmetic Frobenius to $\varpi^{-1}\mathcal{O}_{F}^{\times}$.

Denote by $v=v_{F}$ the $p$-adic valuation on $\overline{\mathbb{Q}}_{p}$
normalized by $v_{F}(\varpi)=1$. If $x\in\overline{F}$, we let $\left|x\right|=q^{-v_{F}(x)}$.
If $\lambda\in\kappa_{F}$, we denote by $[\lambda]$ the Teichmüller
representative of $\lambda$ in $\mathcal{O}_{F}$. If $\mu\in C^{\times}$,
we denote by $\mbox{nr}(\mu):F^{\times}\rightarrow C^{\times}$ the
unramified character sending $\varpi$ to $\mu$.

Denote by $\mathbf{G}$ the algebraic group $\mathbf{GL}_{2}$ defined
over $\mathcal{O}_{F}$, and let $G=\mathbf{G}(F)$ be its $F$-points.

Let $\mathbf{B}$ be the Borel subgroup of $\mathbf{G}$ consisting
of upper triangular matrices, and let $B=\mathbf{B}(F)$ be its $F$-points.

Let $\mathbf{N}$ be the unipotent radical of $\mathbf{B}$, and let
$N=\mathbf{N}(F)$ be its $F$-points.

Let $K$ be the group $GL_{2}(\mathcal{O}_{F})$, which is, up to
conjugation, the unique maximal compact subgroup of $G$. Let $I$
be the Iwahori subgroup of $K$ corresponding to $B$, and let $I(1)$
be its pro-$p$-Iwahori.

Recall that the reduction mod $\mathfrak{p}_{F}$ induces a surjective
homomorphism
\[
\mbox{red}:K\rightarrow\mathbf{G}(\kappa_{F})
\]
and that $I=\mbox{red}^{-1}(\mathbf{B}(\kappa_{F}))$ and $I(1)=\mbox{red}^{-1}(\mathbf{N}(\kappa_{F}))$.

We denote by $Z\simeq F^{\times}$ the center of $G$, and denote

\[
\alpha=\left(\begin{array}{cc}
1 & 0\\
0 & \varpi
\end{array}\right),\quad w=\left(\begin{array}{cc}
0 & 1\\
1 & 0
\end{array}\right),\quad\beta=\alpha w=\left(\begin{array}{cc}
0 & 1\\
\varpi & 0
\end{array}\right).
\]

If $\lambda\in\mathcal{O}_{F}$, we denote
\[
w_{\lambda}=\left(\begin{array}{cc}
0 & 1\\
1 & -\lambda
\end{array}\right).
\]

If $\underline{n}=(n_{\sigma})_{\sigma\in S},\underline{m}=(m_{\sigma})_{\sigma\in S}$
are elements of $\mathbb{Z}_{\ge0}^{S}$, we write:

$(i)$ $\underline{n}!=\prod_{\sigma\in S}n_{\sigma}!$

$(ii)$ $\left|\underline{n}\right|=\sum_{\sigma\in S}n_{\sigma}$

$(iii)$ $\underline{n}-\underline{m}=(n_{\sigma}-m_{\sigma})_{\sigma\in S}$

$(iv)$ $\underline{n}\le\underline{m}$ if $n_{\sigma}\le m_{\sigma}$
for all $\sigma\in S$

$(v)$ ${\underline{n} \choose \underline{m}}=\frac{\underline{n}!}{\underline{m}!(\underline{n}-\underline{m})!}$

$(vi)$ If $z\in\mathcal{O}_{F}$, we write $z^{\underline{n}}=\prod_{\sigma\in S}\sigma(z)^{n_{\sigma}}$.

\subsection{Main Results}

We fix $(\lambda_{1},\lambda_{2})\in C^{\times}\times C^{\times}$
such that $\lambda_{1}\lambda_{2}^{-1}\notin\{q^{2},1\}$ and $\underline{k}\in\mathbb{Z}_{\ge0}^{S}$.
Denote
\[
S^{+}=\{\sigma\in S\mid k_{\sigma}\ne0\}\subseteq S
\]
We also fix some $\iota\in S$, and partition $S^{+}$ according to
the action of $\sigma\in S^{+}$ on the residue field. More precisely,
for each $l\in\{0,\ldots,f-1\}$, denote
\[
J_{l}=\{\sigma\in S^{+}\mid\sigma([\zeta])=\iota\circ\varphi_{0}^{l}([\zeta])\quad\forall\zeta\in\kappa_{F}\}.
\]

For example, if $F$ is unramified, then $\left|J_{l}\right|\le1$
for all $l$.

If $i\in\mathbb{Z}$, we denote by $\overline{i}$ the unique representative
of $i\mod f$ in $\{0,\ldots,f-1\}$. For $\sigma\in J_{l}$, we denote
\[
v_{\sigma}=\inf\left\{ i\mid1\le i\le f,\quad J_{\overline{l+i}}\ne\emptyset\right\}
\]
that is, the smallest power of Frobenius $\varphi_{0}$ that is needed
to pass from $J_{l}$ to another, nonempty $J_{k}$.

We denote by $\chi:GL_{2}(F)\rightarrow F^{\times}$ the character
defined by
\[
\left(\begin{array}{cc}
a & b\\
c & d
\end{array}\right)\mapsto\varpi^{-v_{F}\left(ad-bc\right)}
\]

For $k\in\mathbb{Z}_{\ge0}$, we denote by $\rho_{k}$ the irreducible
algebraic representation of $\mathbf{G}$ of highest weight $\mbox{diag}(x_{1},x_{2})\mapsto x_{2}^{k}$
with respect to $\mathbf{B}$, the Borel subgroup of upper triangular
matrices.

We regard it also as a representation of $G=\mathbf{G}(F)$, and for
any $\sigma\in S$, denote by $\rho_{k}^{\sigma}$ the base change
of $\rho_{k}$ to a representation of $G\otimes_{F,\sigma}C$.

Also, for any $\sigma\in S$, we fix a square root of $\sigma(\varpi)$
and write $\underline{\rho}_{k}^{\sigma}=\rho_{k}^{\sigma}\otimes_{C}(\sigma\circ\chi)^{\frac{k}{2}}$.

For $\underline{k}\in\mathbb{Z}_{\ge0}^{S}$, we write

\[
\rho_{\underline{k}}=\bigotimes_{\sigma\in S}\rho_{k_{\sigma}}^{\sigma},\quad\underline{\rho}_{\underline{k}}=\bigotimes_{\sigma\in S}\underline{\rho}_{k_{\sigma}}^{\sigma}
\]

Let $\mathbf{T}$ be the standard maximal torus of $\mathbf{B}$ consisting
of diagonal matrices, and let $T=\mathbf{T}(F)$.
\begin{defn}
Let $\theta:T\rightarrow C^{\times}$ be a $C$-character of $T$
inflated to $B$, via $T\simeq B/N$. The smooth \emph{principal series
representation }corresponding to $\theta$ is
\[
Ind_{B}^{G}(\theta)=\left\{ f:G\rightarrow C\mid\begin{array}{c}
\exists U_{f}\mbox{ open\,\ s.t. }f(bgk)=\theta(b)f(g)\\
\forall g\in G,\quad b\in B,\quad k\in U_{f}
\end{array}\right\}
\]
with the group $G$ acting by right translations, namely $(gf)(x)=f(xg)$
for all $x,g\in G$ and $f\in Ind_{B}^{G}(\theta)$ .
\end{defn}
Finally, we denote by
\[
\pi=Ind_{B}^{G}(\mbox{nr}(\lambda_{1}^{-1})\otimes\mbox{nr}(\lambda_{2}^{-1}))
\]
the smooth unramified parabolic induction.

Note that the hypothesis on $(\lambda_{1},\lambda_{2})$ assures us
that $\pi$ is irreducible.

We shall from now on consider the irreducible locally algebraic representations
of the form $\underline{\rho}_{\underline{k}}\otimes\pi$.

Note that $\rho_{k}$ is not the most general irreducible algebraic
representation of $\mathbf{G}$, as it can be twisted by a power of
the determinant.

However, for the purpose of existence of $G$-invariant norms, a twist
by a power of the determinant is equivalent to a twist by a power
of $\chi$, which can be then absorbed by $\pi$ into the values of
$\lambda_{1},\lambda_{2}$.

The Breuil-Schneider conjecture can be reformulated as follows (see
\cite{de2013existence})
\begin{conjecture}
\label{conj:The-following-two}The following two statements are equivalent:

$(i)$ The representation $\rho_{\underline{k}}\otimes\pi$ admits
a $G$-invariant norm, i.e. a $p$-adic norm such that $\left\Vert gv\right\Vert =\left\Vert v\right\Vert $
for all $g\in G$ and $v\in\rho_{\underline{k}}\otimes\pi$.

$(ii)$ The following inequalities are satisfied:
\end{conjecture}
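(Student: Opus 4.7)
The conjecture as stated is an equivalence, but as indicated in the introduction the paper only proves the direction (ii) $\Rightarrow$ (i); the reverse direction is due to Hu. I will therefore describe how to build a $G$-invariant norm on $\underline{\rho}_{\underline{k}} \otimes \pi$ from the weak admissibility inequalities, following the strategy of Breuil and de Ieso but adapted to allow larger weights.

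The plan is to produce an explicit $\mathcal{O}_C[G]$-stable lattice $L \subseteq \underline{\rho}_{\underline{k}} \otimes \pi$, whose gauge then gives the sought $G$-invariant norm. The starting point is the compact induction model: viewing $\pi$ (after a suitable twist) as the unique irreducible quotient of $\mathrm{Ind}_B^G(\mathrm{nr}(\lambda_1^{-1}) \otimes \mathrm{nr}(\lambda_2^{-1}))$, one realizes $\underline{\rho}_{\underline{k}} \otimes \pi$ as a quotient of a compact induction $\mathrm{c}\text{-}\mathrm{Ind}_{KZ}^G(\sigma)$ for a suitable representation $\sigma$ of $KZ$ built from $\rho_{\underline{k}}$. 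The Hecke algebra of $\sigma$ is generated by a single operator $T$, and the quotient is cut out by the eigenequation $T - \lambda$ for $\lambda$ determined by $\lambda_1, \lambda_2$ and $\underline{k}$. A natural candidate lattice is the image in $\underline{\rho}_{\underline{k}} \otimes \pi$ of $\mathrm{c}\text{-}\mathrm{Ind}_{KZ}^G(\sigma^0)$ for an $\mathcal{O}_C$-lattice $\sigma^0 \subseteq \sigma$ stable under $KZ$.

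First I would fix the $KZ$-stable $\mathcal{O}_C$-lattice $\sigma^0 \subseteq \rho_{\underline{k}}$ given by the integral structure on the algebraic representation, and describe explicitly its decomposition into weight components under the action of the maximal torus and its transform under $\beta = \alpha w$. Second, I would use the Iwahori-type decomposition of $G$ via the tree of $\mathrm{PGL}_2(F)$ to reduce $G$-stability to stability under $K$, under $Z$, and under a single element such as $\beta$ (or $\alpha$); the former two are built in, so the whole question becomes the assertion that $\beta$ does not introduce denominators worse than those accounted for by $\lambda_1, \lambda_2$. Third, I would expand $\beta \cdot v$ for $v$ in a basis of monomials $X^{\underline{n}} Y^{\underline{k}-\underline{n}}$ of $\sigma^0$ in terms of $K$-translates using the Cartan-type decomposition, and extract from the resulting combinatorial identity a system of numerical inequalities on the valuations of the coefficients parametrized by $\underline{n}$ and the partition data $J_l, v_\sigma$. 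Fourth, I would show that these combinatorial inequalities are implied precisely by the weak admissibility inequalities of (ii), completing the proof that $L$ is $G$-stable.

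The main obstacle, and the reason earlier work required small weights, is the final step: when $|\underline{k}|$ is large, the denominators coming from expanding $\beta \cdot v$ involve binomial coefficients and products $\varpi^{-n_\sigma v_\sigma}$ whose $p$-adic valuations must be compared term-by-term with the linear bounds coming from weak admissibility, and a naive bound loses too much. The key technical input that allows the extension to larger weights is the fine partition $S^+ = \bigsqcup J_l$ together with the invariants $v_\sigma$: grouping the monomials according to the $J_l$ and using that only the first Frobenius jump within each orbit contributes denominators should convert the otherwise hopeless global estimate into a sum of local ones, each of which can be matched against the corresponding admissibility inequality. The auxiliary restriction on $\pi$ alluded to in the introduction presumably enters here to make the matching exact.
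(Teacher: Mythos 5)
Your proposal misidentifies the obstruction. The candidate lattice you describe is exactly $\Theta_{\underline{k},a}$, the image of $ind_{KZ}^{G}\underline{\rho}_{\underline{k}}^{0}$ in the quotient $\Pi_{\underline{k},a}$, and this lattice is $G$-stable for free: $ind_{KZ}^{G}\underline{\rho}_{\underline{k}}^{0}$ is an $\mathcal{O}_{C}[G]$-submodule by construction (the $G$-action $g\cdot[g',v]=[gg',v]$ never touches the values in $\underline{\rho}_{\underline{k}}^{0}$, so $\beta$ introduces no denominators whatsoever), and its image under the $G$-equivariant projection is therefore $G$-stable. Your steps two through four are thus aimed at a statement that holds trivially. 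The genuine difficulty, which your plan never addresses, is that this lattice may fail to be \emph{separated}: one has
\[
\Theta_{\underline{k},a}=\frac{ind_{KZ}^{G}\underline{\rho}_{\underline{k}}^{0}}{ind_{KZ}^{G}\underline{\rho}_{\underline{k}}^{0}\cap(T-a)\left(ind_{KZ}^{G}\underline{\rho}_{\underline{k}}\right)},
\]
and the intersection in the denominator can be large enough that the quotient contains a $C$-line, in which case the gauge of your lattice is only a $G$-invariant \emph{seminorm}. By Emerton's criterion the existence of an invariant norm is equivalent to this finitely generated lattice being separated, and that is where all the work lies.

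Concretely, the paper proves separatedness only under strong additional hypotheses ($\left|S^{+}\right|=1$, $k=dq+r\le q^{2}/2$, and conditions relating $r$, $d$ and $v_{F}(a)$), via a sphere-by-sphere descending induction on the Bruhat--Tits tree: one shows that $(T-a)f\in B_{N}+\varpi^{n}ind_{KZ}^{G}\underline{\rho}_{\underline{k}}^{0}$ forces $f\in B_{N-1}+\varpi^{n-\epsilon}ind_{KZ}^{G}\underline{\rho}_{\underline{k}}^{0}$, by establishing uniform lower bounds on the valuations of the coefficients $c_{i,\mu}^{m}$ of $f$ on each circle. The technical inputs are not the partition $S^{+}=\coprod_{l}J_{l}$ and the invariants $v_{\sigma}$ that you single out --- those govern de Ieso's injectivity criterion for $\theta$, which precisely \emph{fails} in the new cases treated here --- but rather divisibility of binomial coefficients via Kummer's and Lucas' theorems, congruences of polynomials modulo $(x^{q}-x)^{d+1}$, and invertibility mod $p$ of certain matrices of binomial coefficients, all fed into a bootstrapping argument on the statements $\mathscr{A}_{m},\mathscr{B}_{m},\mathscr{C}_{m},\mathscr{D}_{m}$. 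Without an argument of this kind your outline produces only a $G$-invariant seminorm, and no matching of ``denominator'' estimates against the weak admissibility inequalities will close that gap; note also that the statement is a conjecture, and neither the paper nor your outline proves the full equivalence --- the paper's contribution is new cases of $(ii)\Rightarrow(i)$ under the weight and valuation restrictions above, with $(i)\Rightarrow(ii)$ due to Hu.
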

\begin{itemize}
\item $v_{F}(\lambda_{1}^{-1})+v_{F}(\lambda_{2}^{-1})+\left|\underline{k}\right|=0$
\item $v_{F}(\lambda_{2}^{-1})+\left|\underline{k}\right|\ge0$
\item $v_{F}(q\lambda_{1}^{-1})+\left|\underline{k}\right|\ge0$
\end{itemize}
The implication $(i)\Rightarrow(ii)$ of Conjecture \ref{conj:The-following-two}
follows from the work of Hu, which shows it in full generality (for
$GL_{n}(F)$) in \cite{hu2009normes}, using a result of Emerton
(\cite{emerton2005p}, Lemma 1.6).

It remains to show $(ii)\Rightarrow(i)$.

The case $\lambda_{1}\in\mathcal{O}_{C}^{\times}$ (resp. $q\lambda_{2}\in\mathcal{O}_{C}^{\times}$)
is treated in \cite[Prop. 4.10]{de2013existence} hence we may assume
that $\lambda_{1},q\lambda_{2}\notin\mathcal{O}_{C}^{\times}$.

In \cite{breuil2003quelques2,de2013existence} Breuil and de Ieso
represent $\underline{\rho}_{\underline{k}}\otimes\pi$ as a quotient
of a compact induction.

We briefly recall the definition of locally algebraic compact induction.
\begin{defn}
Let $G$ be a topological group, and let $H$ be a closed subgroup.
Let $R$ be either $\mathcal{O}_{C}$ or $C$. Let $(\pi,V)$ be an
$R$-linear representation of $H$ over a free $R$-module of finite
rank $V$. We denote by $ind_{H}^{G}\pi$ or by $ind_{H}^{G}V$ the
locally algebraic compact induction of $(\pi,V)$ from $H$ to $G$.
The space of the representation is
\begin{multline}
ind_{H}^{G}\pi= \\
\left\{ f:G\rightarrow V\mid\begin{array}{c}
f(hg)=\pi(h)f(g)\quad\forall h\in H\\
\mbox{ \ensuremath{f}\,\ has compact support mod }H,\quad f\mbox{ is locally algebraic}
\end{array}\right\}
\end{multline}
and $G$ acts on $ind_{H}^{G}\pi$ by right translation, i.e. $(gf)(x)=f(xg)$
for all $g,x\in G$.
\end{defn}
Then
\[
\underline{\rho}_{\underline{k}}\otimes\pi\simeq\frac{ind_{KZ}^{G}\underline{\rho}_{\underline{k}}}{(T-a)ind_{KZ}^{G}\underline{\rho}_{\underline{k}}}=:\Pi_{\underline{k},a}
\]
where $a=\lambda_{1}+q\lambda_{2}\in\mathfrak{p}_{C}$ , $\underline{\rho}_{\underline{k}}^{0}$
is an $\mathcal{O}_{C}$-lattice in $\underline{\rho}_{\underline{k}}$,
$ind_{KZ}^{G}$ denotes the compact induction, and $T$ is the usual
Hecke operator \cite{barthel1994irreducible}.

We then have a natural map
\[
\theta:\frac{ind_{KZ}^{G}\underline{\rho}_{\underline{k}}^{0}}{(T-a)(ind_{KZ}^{G}\underline{\rho}_{\underline{k}}^{0})}\rightarrow\Pi_{\underline{k},a}
\]
whose image is denoted by $\Theta_{\underline{k},a}$.

This is a sub-$\mathcal{O}_{C}[K]$-module of finite type which generates
$\underline{\rho}_{\underline{k}}\otimes\pi$ over $C$.

Proving Conjecture \ref{conj:The-following-two} is then equivalent
to proving that $\Theta_{\underline{k},a}$ is separated, i.e. does
not contain a $C$-line (see \cite[Prop. 1.17]{emerton2005p}) .
In this paper, we prove that this is the case, for some additional
values of $\underline{k}$ and $a$.

This generalizes the previous works of Breuil and de Ieso in \cite{breuil2003quelques2,de2013existence},
using similar methods.

In fact, de Ieso proves the following theorem:
\begin{thm}
We follow the preceding notations. The morphism $\theta$ is injective
if and only if the following two conditions are satisfied:

$(i)$ For all $l\in\{0,\ldots,f-1\},\quad\left|J_{l}\right|\le1$
.

$(ii)$ For all $\sigma\in J_{l}$
\[
k_{\sigma}+1\le p^{v_{\sigma}}.
\]
\end{thm}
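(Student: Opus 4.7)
The plan is to prove both implications of the biconditional, working with the explicit description $M := ind_{KZ}^{G}\underline{\rho}_{\underline{k}}^{0}$ so that the question becomes whether $\theta : M/(T-a)M \to \Pi_{\underline{k},a}$ has trivial kernel, equivalently whether $\Theta_{\underline{k},a}$ contains no $C$-line in the sense of Emerton \cite{emerton2005p}. The basic tool is the Bruhat--Tits tree presentation of $M$: write $M = \bigcup_{n} M_{\le n}$, where $M_{\le n}$ is the $\mathcal{O}_C$-submodule generated by $[g,v]$ with $g \in KZ \cdot \{1,\beta,\ldots,\beta^{n}\} \cdot K$ and $v \in \underline{\rho}_{\underline{k}}^{0}$, and decompose $T = T^{+} + T^{-}$ with $T^{\pm}$ raising/lowering the tree radius. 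A choice of basis $e_{\underline{j}}$ ($0 \le \underline{j} \le \underline{k}$) of $\underline{\rho}_{\underline{k}}^{0}$ of Mahler type makes the matrix entries of $T^{\pm}$ explicit polynomial expressions whose coefficients are (products of) binomial coefficients $\binom{k_\sigma}{j_\sigma}$ twisted by powers of $\sigma(\varpi)$.

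For necessity I would exhibit explicit obstructions. If $|J_l| \ge 2$ for some $l$, there are two embeddings $\sigma_1, \sigma_2$ that get mixed in a single $T^{+}$-step because they are indistinguishable after reducing mod $\mathfrak{p}_F$; a direct computation of $T[1,e_{\underline{j}}]$ for judiciously chosen $\underline{j}$ then produces a coefficient in $\mathcal{O}_C^\times$ where the $(T-a)$-relation would force something in $\mathfrak{p}_C$, so the element generates a non-separated $C$-line. If instead $k_\sigma + 1 > p^{v_\sigma}$ for some $\sigma \in J_l$, one constructs a similar witness by iterating $T$ exactly $v_\sigma$ times (the period in which Frobenius returns to a nonempty $J_\bullet$) and observing that the binomial term $\binom{k_\sigma}{p^{v_\sigma}}$ is a unit modulo $\mathfrak{p}_C$, again producing a $C$-line.

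For sufficiency, assuming both (i) and (ii), I would show by induction on the tree radius $n$ that any element $x \in M$ whose image in $M/(T-a)M$ maps to zero in $\Pi_{\underline{k},a}$ already lies in $(T-a)M_{\le n-1} + \mathfrak{p}_C^N M_{\le n}$ for every $N$. The inductive step unwinds $x$ by solving for its leading $M_n$-component as $(T^{+} - a)^{-1}$ applied to lower terms; this inversion is legitimate because the $T^{+}$-matrix has diagonal part divisible exactly by $\prod_{\sigma \in J_l} \sigma(\varpi)^{k_\sigma}$ after $v_\sigma$ iterations, while (i) guarantees there is only one relevant $\sigma$ per residue class and (ii) guarantees that the multinomial expansion coefficients carry no hidden unit factor obstructing the division. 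Combined with the hypothesis $\lambda_1, q\lambda_2 \notin \mathcal{O}_C^\times$, which controls the base case, one concludes that no $C$-line survives.

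The main obstacle is the sufficiency direction, and inside it the precise valuation bookkeeping for the iterated Hecke action. The bound $k_\sigma + 1 \le p^{v_\sigma}$ is exactly sharp for Kummer-type congruences on $\binom{k_\sigma}{j_\sigma}$ that ensure the $(T-a)$-recursion is uniformly contracting on the Mahler coefficients; any slack there collapses the argument. I would expect most of the work to go into making this combinatorial estimate uniform over $\underline{j}$ and compatible with the tensor decomposition over $\sigma \in S^{+}$, with condition (i) playing the role of decoupling the estimates embedding-by-embedding.
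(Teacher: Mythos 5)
The theorem you are proving is not proved in this paper at all: it is quoted verbatim from de Ieso \cite{de2013existence}, so your argument has to stand on its own, and it does not, because of a conflation at the very first step. You declare that injectivity of $\theta$ is ``equivalently whether $\Theta_{\underline{k},a}$ contains no $C$-line''. These are genuinely different conditions. Injectivity of $\theta$ means $ind_{KZ}^{G}\underline{\rho}_{\underline{k}}^{0}\cap(T-a)(ind_{KZ}^{G}\underline{\rho}_{\underline{k}})=(T-a)(ind_{KZ}^{G}\underline{\rho}_{\underline{k}}^{0})$, an integrality statement about preimages under $T-a$, whereas separatedness of $\Theta_{\underline{k},a}$ (absence of a $C$-line) is a strictly weaker property of the image. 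The entire point of the present paper is that the two can diverge: with $\left|S^{+}\right|=1$ one has $v_{\sigma}=f$, so condition $(ii)$ reads $k+1\le q$, and yet Theorem \ref{thm:Main Result} proves that $\Theta_{\underline{k},a}$ is separated for $k$ as large as $\tfrac{1}{2}q^{2}$, i.e. in a range where $\theta$ is \emph{not} injective. Consequently your necessity direction, which proposes to show that failure of $(i)$ or $(ii)$ ``produces a non-separated $C$-line'', is aimed at a statement that is false in exactly the regime this paper studies. What non-injectivity actually requires is a witness $f\in ind_{KZ}^{G}\underline{\rho}_{\underline{k}}$ which is not integral modulo $(T-a)(ind_{KZ}^{G}\underline{\rho}_{\underline{k}}^{0})$ but with $(T-a)f\in ind_{KZ}^{G}\underline{\rho}_{\underline{k}}^{0}$; this is a statement about denominators and is perfectly compatible with $\Theta_{\underline{k},a}$ being separated, so no ``$C$-line'' can be extracted from it.

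The sufficiency sketch is closer to the right shape --- a sphere-by-sphere analysis of $T=T^{+}+T^{-}$ using the explicit formulas (as in Lemmas \ref{lem: T action on left side} and \ref{lem:T action on right side}) together with valuation estimates on the factors $\varpi^{\underline{j}}$, $\varpi^{\underline{k}-\underline{i}}$ and binomial coefficients --- but it inherits the same misidentified target and leaves the decisive step unargued. Proving that an integral $x$ lies in $(T-a)M_{\le n-1}+\mathfrak{p}_{C}^{N}M_{\le n}$ for every $N$ only places $x$ in a closure; you still owe the passage to $x\in(T-a)M$. Since any $g$ with $x=(T-a)g$ has finite support on the tree, the correct formulation is a descending induction on the outermost sphere of $g$ showing that each spherical component of $g$ is itself integral, and the whole content of the theorem is the quantitative claim that the divisions occurring in that induction are legitimate precisely when $\left|J_{l}\right|\le1$ and $k_{\sigma}+1\le p^{v_{\sigma}}$, and fail otherwise. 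Your proposal asserts this (``the multinomial expansion coefficients carry no hidden unit factor'') but carries out none of it, and in particular never explains how two embeddings in the same $J_{l}$ actually obstruct the inversion, nor how the bound $p^{v_{\sigma}}$ arises from iterating Frobenius $v_{\sigma}$ steps between nonempty $J_{\bullet}$'s. As it stands, neither direction of the biconditional is established.
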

As a corollary, it follows that under these conditions $\Theta_{\underline{k},a}$
is separated.

In this paper, we prove that even in some cases where $\theta$ is
not injective, the lattice $\Theta_{\underline{k},a}$ is still separated.
Namely, we prove the following theorem:
\begin{thm}
\label{thm:Main Result}We follow the preceding notations. Assume
that $\left|S^{+}\right|=1$, denote by $\sigma$ the unique element
in $S^{+}$, and let $k=k_{\sigma}=d\cdot q+r$, with $0\le r<q$.
Assume that one of the following three conditions is satisfied:

$(i)$ $k\le\frac{1}{2}q^{2}$ with $r<q-d$ and $v_{F}(a)\in[0,1]$.

$(ii)$ $k\le\frac{1}{2}q^{2}$ with $2v_{F}(a)-1\le r<q-d$ and $v_{F}(a)\in[1,e]$.

$(ii)$ $k\le\min\left(p\cdot q-1,\frac{1}{2}q^{2}\right)$ , $d-1\le r$
and $v_{F}(a)\ge d$.

Then $\Theta_{\underline{k},a}$ is separated.
\end{thm}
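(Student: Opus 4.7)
The plan is to follow the framework of Breuil \cite{breuil2003quelques2} and de Ieso \cite{de2013existence}, proving separation of $\Theta_{\underline{k},a}$ by constructing an explicit integer-valued weight function on $ind_{KZ}^{G}\underline{\rho}_{\underline{k}}^{0}$ that is preserved (up to a controlled shift) by the $K$-action and by $T-a$, and that descends to a genuine norm on the quotient $\Theta_{\underline{k},a}$. The key simplification available here is that the hypothesis $|S^{+}|=1$ collapses the multi-index combinatorics of the general case to a one-variable problem: $\underline{\rho}_{\underline{k}}^{0}$ is a single symmetric power $\rho_{k}$ base-changed by $\sigma$, which we identify with the $\mathcal{O}_{C}$-module $\mathcal{O}_{C}[X,Y]_{k}$ of homogeneous polynomials of degree $k$. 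The price to pay is that $k$ is pushed past de Ieso's range $k\le p^{v_{\sigma}}-1=q-1$, so the binomial coefficients $\binom{k}{i}$ cease to be $p$-adic units.

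Concretely, I would fix a system of double coset representatives $\{g_{n,\mu}\}$ for $KZ\backslash G$ indexed by vertices of the Bruhat--Tits tree and write each element of $ind_{KZ}^{G}\underline{\rho}_{\underline{k}}^{0}$ uniquely as a finite sum $\sum_{n,\mu}[g_{n,\mu},P_{n,\mu}]$ with $P_{n,\mu}\in\mathcal{O}_{C}[X,Y]_{k}$. Breuil's explicit formula expresses $T([g_{n,\mu},X^{i}Y^{k-i}])$ as a controlled linear combination of standard basis vectors at adjacent tree vertices, with coefficients built from Teichm\"uller lifts in $\mathcal{O}_{F}$ and binomial coefficients $\binom{k}{i}$. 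The relation $(T-a)f\equiv 0$ in the quotient becomes a recurrence pushing ``deep'' data at level $n$ back into ``shallow'' data at level $n-1$ modulo an $a$-correction; separation is then the assertion that this recurrence cannot manufacture elements of arbitrarily small valuation from a non-zero input.

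I would then define the weight
\[
w\bigl([g_{n,\mu},X^{i}Y^{k-i}]\bigr)=v_{F}(a)\cdot n-\phi_{k}(i),
\]
where $\phi_{k}:\{0,\dots,k\}\to\mathbb{Z}_{\ge 0}$ is a concave envelope dominating $v_{F}\binom{k}{i}$, calibrated differently in each of the three cases. Case (i), with $v_{F}(a)\in[0,1]$ and $r<q-d$, is a mild enlargement of de Ieso's argument: the constraint $r<q-d$ forces the large binomial valuations to sit away from the indices where $T$-action can amplify them. Case (ii) rescales $\phi_{k}$ to accommodate the larger depth penalty $v_{F}(a)\in[1,e]$, which in turn forces the sharper lower bound $r\ge 2v_{F}(a)-1$. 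Case (iii) exploits $v_{F}(a)\ge d$: the factor $a^{n}$ overwhelms the loss $n\cdot v_{F}\binom{k}{i}$ picked up along the tree, while the restriction $k\le pq-1$ keeps the base-$q$ expansion of $k$ to at most two digits, so that Kummer's theorem supplies usable valuation estimates and the bound $d-1\le r$ pins down the worst carry.

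The hardest part will be the verification that $w\bigl((T-a)x\bigr)\ge w(x)+1$ on each graded piece in the range $q-1<k\le pq-1$ (respectively $k\le \frac{1}{2}q^{2}$), where $\binom{k}{i}$ interacts non-trivially with the $a$-correction. This is a delicate $q$-adic combinatorial estimate and is precisely the point at which the three-way case split is forced: the upper bound $k\le\frac{1}{2}q^{2}$ in (i)--(ii) is what prevents $v_{F}\binom{k}{i}$ from accumulating fast enough to outpace the linear-in-$n$ gain $v_{F}(a)\cdot n$, while the two-digit restriction $k\le pq-1$ is what makes the valuation bookkeeping tractable in (iii). Once this $(T-a)$-stability of $w$ is established, $w$ descends to an $\mathcal{O}_{C}$-valued norm on $\Theta_{\underline{k},a}$, and the standard argument of \cite[Prop.~1.17]{emerton2005p} yields $\bigcap_{n}\varpi_{C}^{n}\Theta_{\underline{k},a}=0$, which is the desired separation.
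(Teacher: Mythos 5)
There is a genuine gap: your proposal defers exactly the step that constitutes the entire proof. You never define the envelope $\phi_{k}$, and you explicitly postpone ``the verification that $w\bigl((T-a)x\bigr)\ge w(x)+1$''; but beyond being unverified, a monomial-wise weight $w\bigl([g_{n,\mu},X^{i}Y^{k-i}]\bigr)=v_{F}(a)\cdot n-\phi_{k}(i)$ is structurally too coarse to prove separation once $k\ge q$. Separation requires the converse-type bound: if $(T-a)f$ is divisible by $\varpi^{n}$ outside a fixed ball, then $f$ itself (a priori with \emph{non-integral} coefficients, since $\ker\theta\ne 0$ in this range) is divisible by $\varpi^{n-\epsilon}$, with $\epsilon$ independent of $n$. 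The paper proves this by a descending induction on spheres in which the propagated information is \emph{not} a valuation bound on each coefficient $c^{m}_{i,\mu}$ separately: the inductive statements $\mathscr{A}_{m},\mathscr{B}_{m}$ give different bounds near $i\le d$ and near $i\ge k-d$ than in the middle range, and, crucially, $\mathscr{C}_{m}$ controls linear combinations $\sum_{s}\binom{s}{j}c^{m}_{i+s(q-1),\mu}$ across monomial degrees congruent modulo $q-1$. These cross-monomial constraints, recovered via the divisibility lemmas for $(x^{q}-x)^{d+1}$ and the reduction identity for $x^{t}$ modulo $(x^{q}-x)^{d+1}$, cannot be encoded by any function of the single index $i$, so no choice of $\phi_{k}$ can make your weight carry the induction. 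Likewise, the improvement step (``bootstrapping'') hinges on the invertibility modulo $p$ of explicit binomial-coefficient matrices (Lemma \ref{lem:nonsingular matrix}, Corollaries \ref{cor:nonsingular modified matrix} and \ref{cor:nonsingular matrix large valuation}) together with Kummer/Lucas divisibilities under the precise hypotheses $r<q-d$, $2v_{F}(a)-1\le r$, or $d-1\le r$ with $d<p$; your sketch gestures at Kummer's theorem but contains none of this, and the role you assign to $k\le pq-1$ (``two-digit base-$q$ expansion'') is not the actual reason it is needed (any $k<q^{2}$ has two digits; the point is $d<p$ so that $\binom{d}{l}\not\equiv 0\bmod p$).

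Two further points. First, your claimed inequality goes in the easy direction: showing that $T-a$ does not \emph{decrease} the weight (it has integral matrix coefficients) is essentially automatic and only shows the weight is well defined on cosets; separation needs an upper bound on how much dividing by $T-a$ can cost, i.e. the $\epsilon$-loss statement above, which is where all the work lies. Second, the paper must treat the subcase $r=0$, $0<v_{F}(a)\le 1$ of condition $(i)$ by a genuinely different argument (a separate induction with the hypotheses (\ref{eq:contribution of outer sums}), using sums $\sum_{\lambda}c^{m+1}_{k,\mu+\varpi^{m}[\lambda]}[\lambda]^{l}$ over the residue field), and the reduction from arbitrary $f$ to $f\in B^{0}$ via $\beta$-equivariance; neither appears in your plan. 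As it stands the proposal is a plausible framing of the problem, not a proof.
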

Therefore, these conditions on $\underline{k},a$ ensure the existence
of a $G$-invariant norm on $\underline{\rho}_{\underline{k}}\otimes\pi$,
establishing new cases of Conjecture \ref{conj:The-following-two}.
\begin{example}
Here are a couple of explicit examples for the established new cases:
\end{example}
\begin{enumerate}
\item Let $p\ne2$, $k=\frac{1}{2}(q^{2}-1)$ and $v_{F}(a)\in[0,\min(e,\frac{q+1}{4})]$.
Then, as $k=\frac{1}{2}(q-1)q+\frac{1}{2}(q-1)$, we see that $d=r=\frac{1}{2}(q-1)$,
hence
\[
2v_{F}(a)-1\le2\cdot\frac{q+1}{4}-1=\frac{1}{2}(q-1)=r<q-d=\frac{1}{2}(q+1)
\]
so either $(i)$ or $(ii)$ in Theorem \ref{thm:Main Result} is satisfied,
showing that the lattice $\Theta_{\underline{k},a}$ is separated
in this case.
\item Let $q=p\ne2$, $k=\frac{1}{2}(p^{2}-1)$ and $v_{F}(a)\ge\frac{1}{2}(p-1)$.
As in the previous example, $d=r=\frac{1}{2}(p-1)$, hence $d-1\le r$,
and $v_{F}(a)\ge d$. This shows that condition $(iii)$ in Theorem
\ref{thm:Main Result} is satisfied, showing that the lattice $\Theta_{\underline{k},a}$
is separated in this case.
\end{enumerate}

\section{Preliminaries}

\subsection{The Bruhat-Tits Tree}

We refer to \cite{breuil2003quelques1} and \cite{serre1980trees}
for further details concerning the construction and properties of
the Bruhat-Tits tree of $G$.

Let $\mathcal{T}$ be the Bruhat-Tits tree of $G$: its vertices are
in equivariant bijection with the left cosets $G/KZ$.

The tree $\mathcal{T}$ is equipped with a combinatorial distance,
and $G$ acts on it by isometries.

We denote by $s_{0}$ the \emph{standard vertex}, corresponding to
the trivial class $KZ$.

Equivalently, as the vertices are in equivariant bijection with homothety
classes of lattices in $F^{2}$, $s_{0}$ corresponds to the homothety
class of the lattice $\mathcal{O}_{F}\oplus\mathcal{O}_{F}$.

For $n\ge0$, we call the collection of vertices in $\mathcal{T}$
at distance $n$ from the standard vertex $s_{0}$, \emph{the circle
of radius $n$.}

Recall that we have the Cartan decomposition
\begin{equation}
G=\coprod_{n\in\mathbb{N}}KZ\alpha^{-n}KZ=\left(\coprod_{n\in\mathbb{N}}IZ\alpha^{-n}KZ\right)\coprod\left(\coprod_{n\in\mathbb{N}}IZ\beta\alpha^{-n}KZ\right).\label{eq:Cartan Decomposition}
\end{equation}
In particular, for any $n\in\mathbb{N}$, the classes of $KZ\alpha^{-n}KZ/KZ$
correspond to vertices $s_{i}$ of $\mathcal{T}$ such that $d(s_{i},s_{0})=n$.
Denote $I_{0}=\{0\}$, and for any $n\in\mathbb{N}_{>0}$
\[
I_{n}=\left\{ [\mu_{0}]+\varpi[\mu_{1}]+\ldots+\varpi^{n-1}[\mu_{n-1}]\mid(\mu_{0},\ldots,\mu_{n-1})\in\kappa_{F}^{n}\right\} \subseteq\mathcal{O}_{F}
\]
is a set of representatives for $\mathcal{O}_{F}/\varpi^{n}\mathcal{O}_{F}$.

For $n\in\mathbb{N}$ and $\mu\in I_{n}$, we denote :
\[
g_{n,\mu}^{0}=\left(\begin{array}{cc}
\varpi^{n} & \mu\\
0 & 1
\end{array}\right),\quad g_{n.\mu}^{1}=\left(\begin{array}{cc}
1 & 0\\
\varpi\mu & \varpi^{n+1}
\end{array}\right).
\]

We note that $g_{0,0}^{0}$ is the identity matrix, $g_{0,0}^{1}=\alpha$
and that, for all $n\in\mathbb{N}$ and any $\mu\in I_{n}$ , we have
$g_{n,\mu}^{1}=\beta g_{n,\mu}^{0}w$. Then, $g_{n,\mu}^{0}$ and
$g_{n,\mu}^{1}$ define a system of representatives for $G/KZ$:
\begin{equation}
G=\left(\coprod_{n\in\mathbb{N},\mu\in I_{n}}g_{n,\mu}^{0}KZ\right)\coprod\left(\coprod_{n\in\mathbb{N},\mu\in I_{n}}g_{n,\mu}^{1}KZ\right).\label{eq:system of representatives for G/KZ}
\end{equation}

For $n\in\mathbb{N}$ we denote
\[
S_{n}^{0}=IZ\alpha^{-n}KZ=\coprod_{\mu\in I_{n}}g_{n,\mu}^{0}KZ,\quad S_{n}^{1}=IZ\beta\alpha^{-n}KZ=\coprod_{\mu\in I_{n}}g_{n,\mu}^{1}KZ
\]
and we let $S_{n}=S_{n}^{0}\coprod S_{n}^{1}$ and $B_{n}=B_{n}^{0}\coprod B_{n}^{1}$,
where $B_{n}^{0}=\coprod_{m\le n}S_{m}^{0}$ and $B_{n}^{1}=\coprod_{m\le n}S_{m}^{1}$.

In particular, we have $S_{0}=KZ\coprod\alpha KZ$.
\begin{rem}
Recall, as in \cite{breuil2003quelques1,de2013existence} that $S_{n}^{0}\coprod S_{n-1}^{1}$
(resp. $B_{n}^{0}\coprod B_{n-1}^{1}$) is the collection of vertices
in $\mathcal{T}$ at distance $n$ (resp. at most $n$) from $s_{0}$.
Similarly, $S_{n}^{1}\coprod S_{n-1}^{0}$ (resp. $B_{n}^{1}\coprod B_{n-1}^{0}$)
is the collection of vertices in $\mathcal{T}$ at distance $n$ (resp.
at most $n$) from $\alpha s_{0}$.
\end{rem}
We denote by $R$ either the field $C$ or its ring of integers $\mathcal{O}_{C}$.
Let $\sigma$ be a continuous $R$-linear representation of $KZ$
on a free $R$-module of finite rank $V_{\sigma}$. We denote by $ind_{KZ}^{G}\sigma$
the $R$-module of functions $f:G\rightarrow V_{\sigma}$ compactly
supported modulo $Z$, such that
\[
f(\kappa g)=\sigma(\kappa)f(g)\quad\forall\kappa\in KZ,g\in G
\]

with $G$ acting by right translations, i.e. $(g\cdot f)(g')=f(g'g)$.

As in \cite{barthel1994irreducible}, for $g\in G$, $v\in V_{\sigma}$,
we denote by $[g,v]$ the element of $ind_{KZ}^{G}\sigma$ supported
on $KZg^{-1}$ and such that $[g,v](g^{-1})=v$.

Then we have
\begin{equation}
\begin{split}
\forall g,g'\in G,v\in V_{\sigma}\quad g\cdot[g',v]=[gg',v] \\
\forall g\in G,\kappa\in KZ,v\in V_{\sigma}\quad[g\kappa,v]=[g,\sigma(\kappa)v]
\end{split}
\end{equation}

We can think of $ind_{KZ}^{G}\sigma$ as a vertex coefficient system
on $\mathcal{T}$, having $\sigma$ as the module on each vertex,
identifying $[g,v]$ with the vector $v$ at the vertex corresponding
to $g$, i.e. identifying vertex $g$ with $KZg^{-1}$. Note that
the choice of representative for $gKZ$ affects the choice of vector
$v\in\sigma$.

Recall the following result (\cite[$\mathcal{x}$2]{barthel1994irreducible}),
which gives a basis for the $R[G]$-module $ind_{KZ}^{G}\sigma$.
\begin{prop}
\label{prop:basis for compact induction}Let $\mathcal{B}$ be a basis
for $V_{\sigma}$ over $R$, and let $\mathcal{G}$ be a system of
representatives for left cosets of $G/KZ$. Then the family of functions
$\mathcal{I}:=\left\{ [g,v]\mid g\in\mathcal{G},v\in\mathcal{B}\right\} $
forms a basis for $ind_{KZ}^{G}\sigma$ over $R$.
\end{prop}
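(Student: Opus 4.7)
The plan is to establish the two defining properties of a basis---spanning and linear independence---by exploiting the key fact that the supports of the functions $[g,v]$ are pairwise disjoint for distinct $g \in \mathcal{G}$. Specifically, $[g,v]$ is supported on the right coset $KZg^{-1}$, and these right cosets partition $G$ as $g$ ranges over $\mathcal{G}$, since $KZg^{-1} = KZg'^{-1}$ is equivalent to $g'KZ = gKZ$ and $\mathcal{G}$ was chosen as a system of representatives for $G/KZ$. This reduces the whole problem to a coset-by-coset analysis.

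For spanning, I would take $f \in ind_{KZ}^G \sigma$ and use the fact that its support is compact modulo $Z$ to conclude that $f$ vanishes outside a finite union of cosets $KZg^{-1}$, indexed by a finite subset $\mathcal{G}_f \subseteq \mathcal{G}$. On each such coset, the equivariance $f(\kappa g^{-1}) = \sigma(\kappa)f(g^{-1})$ for $\kappa \in KZ$ shows that the restriction of $f$ to $KZg^{-1}$ is entirely determined by the single vector $f(g^{-1}) \in V_\sigma$. Expanding $f(g^{-1}) = \sum_{v \in \mathcal{B}} c_{g,v}\, v$ in the basis $\mathcal{B}$ with coefficients $c_{g,v} \in R$, one checks that $f = \sum_{g \in \mathcal{G}_f} \sum_{v \in \mathcal{B}} c_{g,v}\,[g,v]$, since both sides share the same support and agree at each $g^{-1}$, hence on all of $G$ by equivariance.

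For linear independence, suppose $\sum_{g,v} c_{g,v}\,[g,v] = 0$ is a finite relation. Fix $g_0 \in \mathcal{G}$; evaluating at $g_0^{-1}$ kills every term with $g \ne g_0$ by the disjointness of supports, leaving $\sum_{v \in \mathcal{B}} c_{g_0,v}\, v = 0$ in $V_\sigma$. Since $\mathcal{B}$ is an $R$-basis of the free $R$-module $V_\sigma$, this forces $c_{g_0,v} = 0$ for all $v \in \mathcal{B}$, and varying $g_0$ completes the argument.

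This proposition is essentially combinatorial bookkeeping and presents no real obstacle; the only subtlety to be careful about is the distinction between the \emph{left} cosets $gKZ$ parametrising $\mathcal{G}$ and the \emph{right} cosets $KZg^{-1}$ on which the functions $[g,v]$ are supported. The result is standard (cf.\ \cite{barthel1994irreducible}) and will underlie the identification of $ind_{KZ}^G\sigma$ with a vertex coefficient system on the Bruhat-Tits tree used throughout the sequel.
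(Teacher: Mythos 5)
Your proof is correct and is exactly the standard argument the paper has in mind: the paper itself gives no proof of this proposition, recalling it directly from \cite{barthel1994irreducible}, and your coset-by-coset analysis (pairwise disjointness of the supports $KZg^{-1}$ for $g\in\mathcal{G}$, reconstruction of $f$ from its finitely many values $f(g^{-1})$ via $KZ$-equivariance for spanning, and evaluation at $g_{0}^{-1}$ for linear independence) is precisely the intended justification, with the left-versus-right coset point handled correctly. No gaps.
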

\begin{rem}
\label{rem:induction as tensor product}The representation $ind_{KZ}^{G}\sigma$
is isomorphic to the representation of $G$ given by the $R[G]$-module
$R[G]\otimes_{R[KZ]}V_{\sigma}$. More precisely, if $g\in G$ and
$v\in V_{\sigma}$ , then the element $g\otimes v$ corresponds to
the function $[g,v]$.

From proposition \ref{prop:basis for compact induction} and the decomposition
(\ref{eq:system of representatives for G/KZ}), any function $f\in ind_{KZ}^{G}\sigma$
can be written uniquely as a finite sum of the form
\[
f=\sum_{n=0}^{n_{0}}\sum_{\mu\in I_{n}}\left([g_{n,\mu}^{0},v_{n,\mu}^{0}]+[g_{n,\mu}^{1},v_{n,\mu}^{1}]\right)
\]
with $v_{n.\mu}^{0},v_{n,\mu}^{1}\in V_{\sigma}$, and where $n_{0}$
is a non-negative integer, which depends on $f$. We call the \emph{support}
of $f$ the collection of $g_{n,\mu}^{i}$ such that $v_{n,\mu}^{i}\ne0$.
We write $f\in S_{n}$ (resp. $B_{n},S_{n}^{0}$, etc. ) if the support
of $f$ is contained in $S_{n}$ (resp. $B_{n},S_{n}^{0}$, etc. ).
We write $f\in B^{0}$ if the support of $f$ is contained in $B_{n}^{0}$
for some $n$, and $f\in B^{1}$ if the support of $f$ is contained
in $B_{n}^{1}$ for some $n$.

Let $\pi$ be a continuous $R$-linear representation of $G$ over
an $R$-module. From \cite{barthel1994irreducible}, we have a canonical
isomorphism of $R$-modules
\[
Hom_{R[G]}(ind_{KZ}^{G}\sigma,\pi)\simeq Hom_{R[KZ]}(\sigma,\pi\mid_{KZ})
\]
which translates to the fact that the functor of compact induction
$ind_{KZ}^{G}$ is left adjoint to the restriction functor, and is
called \emph{compact Frobenius reciprocity}.
\end{rem}

\subsection{Hecke Algebras}

Let $\sigma$ be a continuous $R$-linear representation of $KZ$
over a free $R$-module $V_{\sigma}$ of finite rank. The Hecke algebra
$\mathcal{H}(KZ,\sigma)$ associated to $KZ$ and $\sigma$ is the
$R$-algebra defined by
\[
\mathcal{H}(KZ,\sigma)=End_{R[G]}(ind_{KZ}^{G}\sigma).
\]
We can interpret $\mathcal{H}(KZ,\sigma)$ as a convolution algebra.
In fact, denote by $\mathcal{H}_{KZ}(\sigma)$ the $R$-module of
functions $\varphi:G\rightarrow End_{R}(V_{\sigma})$ compactly supported
modulo $Z$, such that
\[
\forall\kappa_{1},\kappa_{2}\in KZ,\quad\forall g\in G,\quad\varphi(\kappa_{1}g\kappa_{2})=\sigma(\kappa_{1})\circ\varphi(g)\circ\sigma(\kappa_{2}).
\]
This is a unitary $R$-algebra with the convolution product defined,
for all $\varphi_{1},\varphi_{2}\in\mathcal{H}_{KZ}(\sigma)$ and
all $g\in G$, by the following formula:
\[
\varphi_{1}*\varphi_{2}(g)=\sum_{xKZ\in G/KZ}\varphi_{1}(x)\circ\varphi_{2}(x^{-1}g).
\]

It admits as a unit element the function $\varphi_{e}=[1,id]$ defined
by
\[
\varphi_{e}(g)=\begin{cases}
\sigma(g) & g\in KZ\\
0 & else
\end{cases}.
\]
One may verify that the bilinear map
\begin{eqnarray*}
\mathcal{H}_{KZ}(\sigma)\times ind_{KZ}^{G}\sigma & \rightarrow & ind_{KZ}^{G}\sigma\\
(\varphi,f) & \mapsto & T_{\varphi}(f)(g):=\sum_{xKZ\in G/KZ}\varphi(x)\left(f(x^{-1}g)\right)
\end{eqnarray*}
equips $ind_{KZ}^{G}\sigma$ with the structure of a left $\mathcal{H}_{KZ}(\sigma)$-module,
which commutes with the action of $G$.

The following Lemma is well known, see e.g. \cite[Lemma 2.4]{de2013existence}.
\begin{lem}
\label{lem:hecke algebra is convolution}The map
\begin{eqnarray*}
\mathcal{H}_{KZ}(\sigma) & \rightarrow & \mathcal{H}(KZ,\sigma)\\
\varphi & \mapsto & T_{\varphi}(f)
\end{eqnarray*}
is an isomorphism of $R$-algebras. In particular, if $g\in G$, and
if $v\in V_{\sigma}$, the action of $T_{\varphi}$ on $[g,v]$ is
given by
\begin{equation}
T_{\varphi}([g,v])=\sum_{xKZ\in G/KZ}[gx,\varphi(x^{-1})(v)].\label{eq:T action Frobenius reciprocity}
\end{equation}
\end{lem}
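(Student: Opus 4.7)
The plan is the standard four-step verification for convolution realizations of Hecke algebras, following \cite{barthel1994irreducible}: (a) verify that $T_\varphi$ lies in $\mathrm{End}_{R[G]}(ind_{KZ}^G\sigma)$; (b) check multiplicativity of $\varphi\mapsto T_\varphi$; (c) establish bijectivity; (d) derive the explicit formula (\ref{eq:T action Frobenius reciprocity}). The lemma is essentially folklore, so I expect no substantive obstacle; the only care required is keeping consistent conventions for coset representatives, for the right $G$-action, and for the identification $[g,v]\leftrightarrow g\otimes v$ from Remark \ref{rem:induction as tensor product}.

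For (a), the compact supports mod $Z$ of $\varphi$ and $f$ make the sum $T_\varphi(f)(g)=\sum_{xKZ}\varphi(x)(f(x^{-1}g))$ finite; independence of coset representatives follows from the right $KZ$-equivariance of $\varphi$ combined with the left $KZ$-equivariance of $f$. The change of variable $x\mapsto\kappa x$ yields $T_\varphi(f)(\kappa g)=\sigma(\kappa)T_\varphi(f)(g)$, and commutation with the right $G$-translation $(g_0\cdot f)(g)=f(gg_0)$ is immediate. For (b), the substitution $z=xy$ in the iterated sum turns $T_{\varphi_1}\circ T_{\varphi_2}$ into $T_{\varphi_1\ast\varphi_2}$, and $T_{\varphi_e}=\mathrm{id}$ holds by inspection. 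For (c), injectivity follows from the identity $T_\varphi([1,v])(g^{-1})=\varphi(g)(v)$, which shows $T_\varphi=0$ forces $\varphi\equiv 0$. For surjectivity, compact Frobenius reciprocity determines $T\in\mathrm{End}_{R[G]}(ind_{KZ}^G\sigma)$ by the $KZ$-equivariant map $v\mapsto T([1,v])$, so setting $\varphi(g)(v):=T([1,v])(g^{-1})$ yields the sought element of $\mathcal{H}_{KZ}(\sigma)$; the bi-equivariance $\varphi(\kappa_1 g\kappa_2)=\sigma(\kappa_1)\varphi(g)\sigma(\kappa_2)$ is checked using $[1,\sigma(\kappa)v]=[\kappa,v]$ together with the $G$-equivariance of $T$.

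Finally, the explicit formula (\ref{eq:T action Frobenius reciprocity}) is obtained by specializing to $f=[g_0,v]$: since $[g_0,v]$ is supported on $KZg_0^{-1}$, the sum $T_\varphi([g_0,v])(h)=\sum_{xKZ}\varphi(x)([g_0,v](x^{-1}h))$ collapses to the single term $xKZ=hg_0KZ$, giving the pointwise value $\varphi(hg_0)(v)$. The right-hand side of (\ref{eq:T action Frobenius reciprocity}), evaluated at $h$, similarly reduces to the unique contributing coset $xKZ=(hg_0)^{-1}KZ$ and produces $\varphi(hg_0)(v)$ as well, which finishes the proof.
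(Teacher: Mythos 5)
Your overall plan is the standard one, and in fact the paper offers no proof of this lemma at all (it simply quotes it from \cite[Lemma 2.4]{de2013existence} and \cite{barthel1994irreducible}), so a full verification is reasonable; steps (a), (b) and your derivation of (\ref{eq:T action Frobenius reciprocity}) are correct as written, and the collapse of both sides to the value $\varphi(hg_0)(v)$ at $h$ is exactly the right computation under the paper's conventions.

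There is, however, one concrete inversion slip in step (c). With the paper's normalizations ($[1,v]$ supported on $KZ$ with $[1,v](1)=v$, and $T_\varphi(f)(g)=\sum_{xKZ\in G/KZ}\varphi(x)(f(x^{-1}g))$), the evaluation identity is $T_\varphi([1,v])(g)=\varphi(g)(v)$, not $T_\varphi([1,v])(g^{-1})=\varphi(g)(v)$. For injectivity this is harmless, since $g\mapsto g^{-1}$ is a bijection of $G$ and the conclusion $\varphi\equiv 0$ still follows. But in the surjectivity step the definition $\varphi(g)(v):=T([1,v])(g^{-1})$ is the wrong one: carrying out the very check you describe gives $\varphi(\kappa_1 g\kappa_2)=\sigma(\kappa_2)^{-1}\circ\varphi(g)\circ\sigma(\kappa_1)^{-1}$, so this $\varphi$ does not lie in $\mathcal{H}_{KZ}(\sigma)$ (and the sum defining $T_\varphi$ is then not even independent of the coset representatives), nor does it recover $T$. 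The repair is immediate: set $\varphi(g)(v):=T([1,v])(g)$. Then bi-equivariance follows from $T([1,v])(\kappa_1 g\kappa_2)=\sigma(\kappa_1)\,T(\kappa_2\cdot[1,v])(g)=\sigma(\kappa_1)\,T([1,\sigma(\kappa_2)v])(g)$, compact support mod $Z$ follows because $V_\sigma$ has finite rank, and $T_\varphi=T$ on the spanning elements $[g,v]=g\cdot[1,v]$ by $G$-equivariance, as you intended. With that single correction your argument is complete and is precisely the folklore proof the paper points to.
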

We assume now that $R=C$. Denote by $\mathbf{1}$ the trivial representation
of $KZ$ and assume that $\sigma$ is the restriction to $KZ$ of
a locally analytic representation (in the sense of \cite{schneider2003algebras,schneider2002locally})
of $G$ on $V_{\sigma}$. By \cite{schneider2006banach}, the map
\begin{eqnarray*}
\iota_{\sigma}:\mathcal{H}_{KZ}(\mathbf{1}) & \rightarrow & \mathcal{H}_{KZ}(\sigma)\\
\varphi & \mapsto & (\varphi\cdot\sigma)(g):=\varphi(g)\sigma(g)
\end{eqnarray*}
is then an injective homomorphism of $C$-algebras. Before we state
a condition assuring the bijectivity of $\iota_{\sigma}$, we recall
the existence of a $\mathbb{Q}_{p}$-linear action of the Lie algebra
$\mathfrak{g}$ of $G$ on the space $V_{\sigma}$ defined by
\[
\forall\mathfrak{x}\in\mathfrak{g},\forall v\in V_{\sigma},\quad\mathfrak{x}v=\frac{d}{dt}\mbox{exp}(t\mathfrak{x})v\mid_{t=0}
\]
where $\mbox{exp}:\mathfrak{g}\dasharrow G$ denotes the exponential
map defined locally in the neighbourhood of $0$ (\cite[$\mathcal{x}$2]{schneider2002locally}).

This action is extended to an action of the Lie algebra $\mathfrak{g}\otimes_{\mathbb{Q}_{p}}C$,
and allows de Ieso to obtain the following result: (see \cite[Lemma 4.2.5]{de2013existence})
\begin{lem}
\label{lem:isomorphism of Hecke algebras in the abs irred case}If
the $\mathfrak{g}\otimes_{\mathbb{Q}_{p}}C$-module $V_{\sigma}$
is absolutely irreducible, then the map $\iota_{\sigma}$ is bijective.
\end{lem}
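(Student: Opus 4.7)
The plan is to establish surjectivity of $\iota_\sigma$ (injectivity is already given), so given $\psi \in \mathcal{H}_{KZ}(\sigma)$ I need to produce a scalar-valued $\varphi \in \mathcal{H}_{KZ}(\mathbf{1})$ with $\psi(g) = \varphi(g)\sigma(g)$ for all $g \in G$. Since $\psi$ is bi-$KZ$-equivariant with compact support modulo $Z$, I first reduce to specifying $\varphi$ on a finite set of $KZ$-double coset representatives $g_{0}$ in the support of $\psi$ (one may take the Cartan representatives $\alpha^{-n}$), and then extend by bi-invariance.

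For each such $g_{0}$, introduce the endomorphism $A_{g_{0}} := \psi(g_{0})\sigma(g_{0})^{-1} \in \mathrm{End}_{C}(V_\sigma)$. The key claim is that for every $\kappa$ in the open subgroup $U_{g_{0}} := KZ \cap g_{0}\,KZ\,g_{0}^{-1}$ of $KZ$, the operator $A_{g_{0}}$ commutes with $\sigma(\kappa)$. Indeed, writing $\kappa = g_{0}\kappa' g_{0}^{-1}$ with $\kappa' \in KZ$, the two identities $\psi(\kappa g_{0}) = \sigma(\kappa)\psi(g_{0})$ and $\psi(g_{0}\kappa') = \psi(g_{0})\sigma(\kappa')$ combine, together with $\sigma(g_{0}^{-1}\kappa g_{0}) = \sigma(g_{0})^{-1}\sigma(\kappa)\sigma(g_{0})$ (valid because $\sigma$ extends to a representation of $G$), to yield $\sigma(\kappa)A_{g_{0}} = A_{g_{0}}\sigma(\kappa)$.

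Next, because $\sigma$ is locally analytic, I differentiate the commutation $\sigma(\exp(t\mathfrak{x}))A_{g_{0}} = A_{g_{0}}\sigma(\exp(t\mathfrak{x}))$ at $t=0$ to conclude that $A_{g_{0}}$ commutes with the action of every $\mathfrak{x} \in \mathfrak{g}$ on $V_\sigma$. Since $A_{g_{0}}$ is $C$-linear, this commutation extends to all of $\mathfrak{g}\otimes_{\mathbb{Q}_p}C$. The absolute irreducibility hypothesis, combined with Schur's lemma over $C$, then forces $A_{g_{0}}$ to be a scalar $c_{g_{0}} \in C$. Setting $\varphi(g) := c_{g_{0}}$ on the double coset $KZ\,g_{0}\,KZ$ (and repeating over the finitely many cosets meeting the support of $\psi$) defines a bi-$KZ$-invariant scalar function compactly supported mod $Z$, and the computation $\varphi(\kappa_{1}g_{0}\kappa_{2})\sigma(\kappa_{1}g_{0}\kappa_{2}) = \sigma(\kappa_{1})(c_{g_{0}}\sigma(g_{0}))\sigma(\kappa_{2}) = \sigma(\kappa_{1})\psi(g_{0})\sigma(\kappa_{2}) = \psi(\kappa_{1}g_{0}\kappa_{2})$ confirms $\varphi \cdot \sigma = \psi$.

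The main obstacle will be the passage from commutation with $\sigma(U_{g_{0}})$ to commutation with $\mathfrak{g}\otimes_{\mathbb{Q}_p}C$: one must invoke local analyticity to differentiate the orbit map at the identity, and then verify that the a priori $\mathbb{Q}_p$-linear conclusion is in fact compatible with the $C$-algebra structure on $\mathrm{End}_{C}(V_\sigma)$. The hypothesis of \emph{absolute} irreducibility over $\mathfrak{g}\otimes_{\mathbb{Q}_p}C$ (as opposed to mere $\mathfrak{g}$-irreducibility) is exactly what is needed at the final step to rule out nontrivial elements in the commutant and thereby deliver the scalar conclusion.
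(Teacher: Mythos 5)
Your argument is correct: the paper itself does not prove this lemma but cites it from de Ieso (Lemma 4.2.5 of \cite{de2013existence}), and your proof is essentially the standard one given there --- show $\psi(g_{0})\sigma(g_{0})^{-1}$ commutes with $\sigma$ on the open subgroup $KZ\cap g_{0}KZg_{0}^{-1}$, differentiate (using local analyticity and the finite rank of $V_{\sigma}$) to get commutation with $\mathfrak{g}\otimes_{\mathbb{Q}_{p}}C$, and conclude by Schur's lemma for absolutely irreducible modules that it is a scalar, which yields the desired scalar-valued $\varphi$ double coset by double coset.
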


\section{Representations of $GL_{2}(F)$}

\subsection{\label{subsec:-algebraic-representations-of G}$\mathbb{Q}_{p}$-algebraic
representations of $GL_{2}(F)$}

For $k\in\mathbb{N}$, we denote by $\rho_{k}$ the irreducible algebraic
representation of $\mathbf{G}$ of highest weight $\mbox{diag}(x_{1},x_{2})\mapsto x_{2}^{k}$
with respect to $\mathbf{B}$, and we consider it also as a representation
of $G=\mathbf{G}(F)$.

For $\sigma\in S$, we denote by $\rho_{k}^{\sigma}$ the base change
of $\rho_{k}$ to a representation of $G\otimes_{F,\sigma}C$.

We denote by $\chi:GL_{2}(F)\rightarrow F^{\times}$ the character
defined by
\[
\left(\begin{array}{cc}
a & b\\
c & d
\end{array}\right)\mapsto\varpi^{-v_{F}(ad-bc)}.
\]

Also, choose a square root of $\sigma(\pi)$ in $C$, and let
\[
\underline{\rho}_{k}^{\sigma}=\rho_{k}^{\sigma}\otimes_{C}(\sigma\circ\chi)^{\frac{k}{2}}.
\]

For $\sigma\in S$ and $k\in\mathbb{N}$, we identify $\underline{\rho}_{k}^{\sigma}$
with the representation of $G$ given by the $C$-vector space
\[
\bigoplus_{i=0}^{k}C\cdot x_{\sigma}^{k-i}y_{\sigma}^{i}
\]
of homogeneous polynomials of degree $k$ in $x_{\sigma},y_{\sigma}$
with coefficients in $C$, on which $G$ acts by the following formula:
\begin{equation}
\begin{split}
&\left(\begin{array}{cc}
a & b\\
c & d
\end{array}\right)(x_{\sigma}^{k-i}y_{\sigma}^{i})=\\
&=\left(\sigma\circ\chi\left(\begin{array}{cc}
a & b\\
c & d
\end{array}\right)\right)^{\frac{k}{2}}(\sigma(a)x_{\sigma}+\sigma(c)y_{\sigma})^{k-i}(\sigma(b)x_{\sigma}+\sigma(d)y_{\sigma})^{i}.\label{eq:formula for G algebraic action}
\end{split}
\end{equation}

If $w_{\sigma}\in\underline{\rho}_{k}^{\sigma}$ and if $g\in G$,
we denote simply $gw_{\sigma}$ for the vector obtained from letting
$g$ act on $w_{\sigma}$.
\begin{rem}
The formula (\ref{eq:formula for G algebraic action}) assures, in
particular, that for every $w_{\sigma}\in\underline{\rho}_{k}^{\sigma}$
\[
\left(\begin{array}{cc}
\varpi & 0\\
0 & \varpi
\end{array}\right)w_{\sigma}=w_{\sigma}.
\]

Fix $\underline{k}=(k_{\sigma})_{\sigma\in S}\in\mathbb{N}^{S}$,
and let
\[
I_{\underline{k}}=\left\{ \underline{i}=(i_{\sigma})_{\sigma\in S}\in\mathbb{N}^{S},\quad0\le i_{\sigma}\le k_{\sigma}\quad\forall\sigma\in S\right\} .
\]

We denote by $\rho_{\underline{k}}$ (resp. $\underline{\rho}_{\underline{k}}$
) the representation of $G$ on the following vector space
\[
V_{\rho_{\underline{k}}}:=\bigotimes_{\sigma\in S}\rho_{k_{\sigma}}^{\sigma}\quad\left(\mbox{resp.}\quad V_{\underline{\rho}_{\underline{k}}}:=\bigotimes_{\sigma\in S}\underline{\rho}_{k_{\sigma}}^{\sigma}\right)
\]

on which an element $\left(\begin{array}{cc}
a & b\\
c & d
\end{array}\right)\in G$ acts componentwise. In particular, for all $\bigotimes_{\sigma\in S}w_{\sigma}\in V_{\underline{\rho}_{\underline{k}}}$
we have:
\[
\underline{\rho}_{\underline{k}}\left(\begin{array}{cc}
a & b\\
c & d
\end{array}\right)\left(\bigotimes_{\sigma\in S}w_{\sigma}\right)=\bigotimes_{\sigma\in S}\left(\left(\begin{array}{cc}
a & b\\
c & d
\end{array}\right)w_{\sigma}\right).
\]

These are two absolutely irreducible representations of $G$ which
remain absolutely irreducible even when we restrict them to the action
of an open subgroup of $G$ (\cite[$\mathcal{x}$2]{breuil2007first}).

For all $\underline{i}\in I_{\underline{k}}$, we let:
\[
e_{\underline{k},\underline{i}}:=\bigotimes_{\sigma\in S}e_{k_{\sigma},i_{\sigma}}
\]
where, for any $\sigma\in S$, $e_{k_{\sigma},i_{\sigma}}$ denotes
the monomial $x_{\sigma}^{k_{\sigma}-i_{\sigma}}y_{\sigma}^{i_{\sigma}}$.
We then denote by $U_{\underline{k}}$ the endomorphism of $V_{\underline{\rho}_{\underline{k}}}$
defined by
\[
U_{\underline{k}}:=\bigotimes_{\sigma\in S}U_{k_{\sigma}}^{\sigma}
\]
where $U_{k}^{\sigma}$ denotes, for all $\sigma\in S$ and $k\in\mathbb{N}$,
the endomorphism of $\underline{\rho}_{k}^{\sigma}$ given, with respect
to the basis $\left(e_{k,i}\right)_{i=0}^{k}$ by the diagonal matrix
\[
U_{k}^{\sigma}=\left(\begin{array}{cccc}
\sigma(\varpi)^{k} & 0 & \cdots & 0\\
0 & \sigma(\varpi)^{k-1} & \ddots & \vdots\\
\vdots & \ddots & \ddots & 0\\
0 & \cdots & 0 & 1
\end{array}\right).
\]
In \cite[Lemma 3.2]{de2013existence}, de Ieso proves the following
Lemma.
\end{rem}
\begin{lem}
\label{lem:existence of the Hecke operator}There exists a unique
function $\psi:G\rightarrow End_{C}(V_{\underline{\rho}_{\underline{k}}})$
supported in $KZ\alpha^{-1}KZ$ such that:

$(i)$ For any $\kappa_{1},\kappa_{2}\in KZ$ we have $\psi(\kappa_{1}\alpha^{-1}\kappa_{2})=\underline{\rho}_{\underline{k}}(\kappa_{1})\circ\psi(\alpha^{-1})\circ\underline{\rho}_{\underline{k}}(\kappa_{2})$.

$(ii)$ $\psi(\alpha^{-1})=U_{\underline{k}}$.
\end{lem}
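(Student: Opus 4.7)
The uniqueness is immediate: any $g \in KZ\alpha^{-1}KZ$ has the form $g = \kappa_{1}\alpha^{-1}\kappa_{2}$ with $\kappa_{1},\kappa_{2} \in KZ$, so properties $(i)$ and $(ii)$ force $\psi(g) = \underline{\rho}_{\underline{k}}(\kappa_{1}) \circ U_{\underline{k}} \circ \underline{\rho}_{\underline{k}}(\kappa_{2})$, and $\psi$ vanishes off the double coset by hypothesis. The substantive content of the lemma is that this prescription is well-defined, independent of the chosen decomposition.

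Suppose $\kappa_{1}\alpha^{-1}\kappa_{2} = \kappa_{1}'\alpha^{-1}\kappa_{2}'$, and set $\mu = (\kappa_{1}')^{-1}\kappa_{1} \in KZ$. Then $\kappa_{2}'\kappa_{2}^{-1} = \alpha\mu\alpha^{-1}$, which must therefore also lie in $KZ$. A short manipulation shows that the equality of the two candidate values of $\psi(g)$ is equivalent to the single identity
\[
\underline{\rho}_{\underline{k}}(\mu) \circ U_{\underline{k}} = U_{\underline{k}} \circ \underline{\rho}_{\underline{k}}(\alpha\mu\alpha^{-1})
\]
for every $\mu \in KZ \cap \alpha^{-1}(KZ)\alpha$. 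A direct computation with $\alpha = \mathrm{diag}(1,\varpi)$ identifies this intersection, modulo $Z$, with the opposite Iwahori subgroup of $K$: concretely, those $\left(\begin{smallmatrix}a&b\\c&d\end{smallmatrix}\right) \in K$ with $b \in \varpi\mathcal{O}_{F}$.

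Since both sides of the desired identity are multiplicative in $\mu$, it suffices to verify it on a set of topological generators of $KZ \cap \alpha^{-1}(KZ)\alpha$: the center $Z$; the diagonal units $\mathrm{diag}(a,d)$ with $a,d \in \mathcal{O}_{F}^{\times}$; the lower unipotents $\left(\begin{smallmatrix}1&0\\c&1\end{smallmatrix}\right)$ with $c \in \mathcal{O}_{F}$; and the upper $\varpi$-unipotents $\left(\begin{smallmatrix}1&\varpi b\\0&1\end{smallmatrix}\right)$ with $b \in \mathcal{O}_{F}$. Furthermore, since $\underline{\rho}_{\underline{k}} = \bigotimes_{\sigma} \underline{\rho}_{k_{\sigma}}^{\sigma}$ and $U_{\underline{k}} = \bigotimes_{\sigma} U_{k_{\sigma}}^{\sigma}$, the check reduces factor by factor to each $\sigma \in S$.

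The verification is then a routine calculation from formula (\ref{eq:formula for G algebraic action}) on the monomial basis $\{x_{\sigma}^{k_{\sigma}-i}y_{\sigma}^{i}\}$. On $Z$, absolute irreducibility forces $\underline{\rho}_{\underline{k}}|_{Z}$ to act by scalars and $\alpha z\alpha^{-1} = z$, so the identity is trivial; the diagonal-unit case is similar, the action being scalar on each monomial. The two substantive cases are the unipotents: conjugation by $\alpha$ multiplies the $c$-entry of a lower unipotent by $\varpi$ and removes the factor of $\varpi$ from the upper entry of an upper $\varpi$-unipotent. In each case, the entries $\sigma(\varpi)^{k_{\sigma}-i}$ of $U_{k_{\sigma}}^{\sigma}$ absorb the resulting shift of $\sigma(\varpi)$ under binomial expansion, yielding equality coefficient by coefficient on the monomial basis. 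I do not anticipate any essential obstacle: the endomorphism $U_{k_{\sigma}}^{\sigma}$ is designed precisely so that this absorption occurs.
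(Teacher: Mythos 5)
Your argument is correct, but it takes a more laborious route than the paper needs. You reduce existence to well-definedness of $\psi(\kappa_{1}\alpha^{-1}\kappa_{2})=\underline{\rho}_{\underline{k}}(\kappa_{1})\circ U_{\underline{k}}\circ\underline{\rho}_{\underline{k}}(\kappa_{2})$, which you correctly translate into the intertwining identity $\underline{\rho}_{\underline{k}}(\mu)\circ U_{\underline{k}}=U_{\underline{k}}\circ\underline{\rho}_{\underline{k}}(\alpha\mu\alpha^{-1})$ for $\mu\in KZ\cap\alpha^{-1}KZ\alpha$; your identification of that intersection (mod $Z$) with the opposite Iwahori is right, the set of $\mu$ satisfying the identity is indeed a closed subgroup, and the checks on your generators do go through on the monomial basis (I verified the two unipotent cases, where the diagonal entries $\sigma(\varpi)^{k_{\sigma}-i}$ of $U_{k_{\sigma}}^{\sigma}$ absorb the $\sigma(\varpi)$-shift exactly as you predict), so the sketched verification closes. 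The paper, however, disposes of the lemma in one line via the observation recorded right after it (equation (\ref{eq:U is rho})): $U_{\underline{k}}$ is nothing but $\underline{\rho}_{\underline{k}}(\alpha^{-1})$ (up to the harmless central normalization coming from the $\chi^{k/2}$-twist, a scalar), so one may simply take $\psi$ to be the restriction of $\underline{\rho}_{\underline{k}}$ to $KZ\alpha^{-1}KZ$ extended by zero; then $(i)$ is immediate from multiplicativity of $\underline{\rho}_{\underline{k}}$ and $(ii)$ is the definition, and uniqueness is the same trivial remark you make. This observation would also have shortcut your own computation: since $U_{\underline{k}}$ is a scalar multiple of $\underline{\rho}_{\underline{k}}(\alpha^{-1})$, your intertwining identity holds for \emph{every} $\mu\in G$ by associativity, with no need for the Iwahori factorization or the generator-by-generator check. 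What your approach buys is self-containedness without noticing that identity; what it costs is the case analysis, which the structure of $U_{\underline{k}}$ renders unnecessary.
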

We remark that in fact, $\psi=\underline{\rho}_{\underline{k}}\mid_{KZ\alpha^{-1}KZ}$,
since
\begin{equation}
U_{\underline{k}}=\underline{\rho}_{\underline{k}}(\alpha^{-1})\label{eq:U is rho}
\end{equation}

By Lemma \ref{lem:hecke algebra is convolution}, we know that the
Hecke algebra $\mathcal{H}(KZ,\underline{\rho}_{\underline{k}})$
is naturally isomorphic to the convolution algebra $\mathcal{H}_{KZ}(\underline{\rho}_{\underline{k}})$
of functions $\varphi:G\rightarrow End_{C}(V_{\underline{\rho}_{\underline{k}}})$
compactly supported modulo $Z$, such that
\[
\forall\kappa_{1},\kappa_{2}\in KZ,g\in G,\quad\varphi(\kappa_{1}g\kappa_{2})=\underline{\rho}_{\underline{k}}(\kappa_{1})\circ\varphi(g)\circ\underline{\rho}_{\underline{k}}(\kappa_{2}).
\]

It follows that the map $\psi$ from Lemma \ref{lem:existence of the Hecke operator}
corresponds to an operator $T\in\mathcal{H}(KZ,\underline{\rho}_{\underline{k}})$
whose action on the elements $[g,v]$ for $g\in G$ and $v\in V_{\underline{\rho}_{\underline{k}}}$
is given by the formula (\ref{eq:T action Frobenius reciprocity}).

Moreover,
\begin{rem}
A simple argument using the Bruhat-Tits tree of $G$ shows that $T$
is injective on $ind_{KZ}^{G}\underline{\rho}_{\underline{k}}$.
\end{rem}

\subsection{\label{subsec:Lattices}Lattices}

We keep the notations of Section \ref{subsec:-algebraic-representations-of G}
and denote by $\underline{\rho}_{k}^{\sigma,0}$, for $\sigma\in S$
and $k\in\mathbb{N}$, the representation of the group $KZ$ on the
$\mathcal{O}_{C}$-module
\[
\bigoplus_{i=0}^{k}\mathcal{O}_{C}\cdot x_{\sigma}^{k-i}y_{\sigma}^{i}
\]
 of homogeneous polynomials of degree $k$, on which an element $\left(\begin{array}{cc}
a & b\\
c & d
\end{array}\right)\in K$ acts by
\[
\left(\begin{array}{cc}
a & b\\
c & d
\end{array}\right)\left(x_{\sigma}^{k-i}y_{\sigma}^{i}\right)=\left(\sigma(a)x_{\sigma}+\sigma(c)y_{\sigma}\right)^{k-i}\left(\sigma(b)x_{\sigma}+\sigma(d)y_{\sigma}\right)^{i}
\]
and the matrix $\left(\begin{array}{cc}
\varpi & 0\\
0 & \varpi
\end{array}\right)\in Z$ acts as the identity. If $w_{\sigma}\in\underline{\rho}_{k}^{\sigma,0}$
and if $g\in G$, we simply denote by $gw_{\sigma}$ the vector obtained
from letting $g$ act on $w_{\sigma}$.
\begin{defn}
Let $V$ be a $C$-vector space. A \emph{lattice }$\mathcal{L}$ in
$V$ is a sub-$\mathcal{O}_{C}$-module of $V$, such that, for any
$v\in V$, there exists a nonzero element $a\in C^{\times}$ such
that $av\in\mathcal{L}$. A lattice $\mathcal{L}$ is called \emph{separated}
if $\bigcap_{n\in\mathbb{N}}\varpi^{n}\mathcal{L}=0$, which is equivalent
to demanding that it contains no $C$-line.
\end{defn}
\begin{example}
\label{exa:separated lattice}The $\mathcal{O}_{C}$-module $\underline{\rho}_{k}^{\sigma,0}$
is a separated lattice of $\underline{\rho}_{k}^{\sigma}$, which
is moreover stable under the action of $KZ$.
\end{example}
\begin{rem}
There are many choices of possible separated lattices in $\underline{\rho}_{k}^{\sigma}$,
which are stable under the action of $KZ$. Another natural choice
(and in some sense even more natural than ours), as pointed out by
C. Breuil, is the lattice
\[
\bigoplus_{i=0}^{k}\mathcal{O}_{C}\cdot\frac{x_{\sigma}^{k-i}y_{\sigma}^{i}}{(k-i)!\cdot i!}
\]
which, in the case $q>p$, is different from $\underline{\rho}_{k}^{\sigma,0}$.
However, as using this lattice facilitates some of the technical aspects,
others become more difficult. In particular, we strongly use the divisibilty
by powers of $p$ of certain binomial coefficients, which is not possible
when using this alternative lattice. Therefore, we have not been able
to use different lattices in order to prove more cases of the conjecture.
We have further hypothesized the possibility of using different lattices
for different values of $v_{F}(a)$, but this as well did not yield
any results.
\end{rem}
\begin{example}
We denote by $\underline{\rho}_{\underline{k}}^{0}$ the representation
of $KZ$ on the following space
\[
V_{\underline{\rho}_{\underline{k}}^{0}}=\bigotimes_{\sigma\in S}\underline{\rho}_{k_{\sigma}}^{\sigma,0}
\]

on which an element $\left(\begin{array}{cc}
a & b\\
c & d
\end{array}\right)\in KZ$ acts via
\begin{equation}
\underline{\rho}_{\underline{k}}^{0}\left(\begin{array}{cc}
a & b\\
c & d
\end{array}\right)\left(\bigotimes_{\sigma\in S}w_{\sigma}\right)=\bigotimes_{\sigma\in S}\left(\left(\begin{array}{cc}
a & b\\
c & d
\end{array}\right)w_{\sigma}\right)\label{eq:tensor product action}
\end{equation}

The example \ref{exa:separated lattice} assures us that the $\mathcal{O}_{C}$-module
$V_{\underline{\rho}_{\underline{k}}^{0}}$ is a separated lattice
of the space $V_{\underline{\rho}_{\underline{k}}}$ constructed in
Section \ref{subsec:-algebraic-representations-of G}. Therefore,
the $\mathcal{O}_{C}$-module $ind_{KZ}^{G}\underline{\rho}_{\underline{k}}^{0}$
is also a separated lattice of $ind_{KZ}^{G}\underline{\rho}_{\underline{k}}$
and is, by construction, stable under the action of $G$.

By Remark \ref{rem:induction as tensor product}, we can deduce the
existence of an injective map $\mathcal{H}(KZ,\underline{\rho}_{\underline{k}}^{0})\rightarrow\mathcal{H}(KZ,\underline{\rho}_{\underline{k}})$.
Moreover, one verifies that the operator $T\in\mathcal{H}(KZ,\underline{\rho}_{\underline{k}})$
defined in Section \ref{subsec:-algebraic-representations-of G},
induces by restriction a $G$-equivariant endomorphism of $ind_{KZ}^{G}\underline{\rho}_{\underline{k}}^{0}$
, which we again denote by $T$.

The following Lemma is proved in \cite[Lemma 3.3]{de2013existence},
but for sake of completeness we include here a proof of both isomorphisms.
\end{example}
\begin{lem}
\label{lem:Hecke algebra is polynomials}There are isomorphisms of
$\mathcal{O}_{C}$-algebras $\mathcal{H}_{\underline{\rho}_{\underline{k}}^{0}}(KZ,G)\simeq\mathcal{O}_{C}[T]$
and $\mathcal{H}_{\underline{\rho}_{\underline{k}}}(KZ,G)\simeq C[T]$.
\end{lem}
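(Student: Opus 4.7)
The plan is to derive both isomorphisms from a single structural picture: identify Hecke operators with bi-equivariant functions through Lemma~\ref{lem:hecke algebra is convolution}, decompose their support using the Cartan decomposition (\ref{eq:Cartan Decomposition}), and show that each double-coset component is pinned down up to a scalar by the bi-equivariance constraint. The $C$-isomorphism falls out cleanly from results already in hand; the $\mathcal{O}_C$-isomorphism then reduces to an integrality check.

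For the $C$-algebra statement I would invoke Lemma~\ref{lem:isomorphism of Hecke algebras in the abs irred case}: the representation $V_{\underline{\rho}_{\underline{k}}}$ is absolutely irreducible as a $\mathfrak{g}\otimes_{\mathbb{Q}_p}C$-module (recalled in Section~\ref{subsec:-algebraic-representations-of G}), so the map $\iota_{\underline{\rho}_{\underline{k}}}: \mathcal{H}_{KZ}(\mathbf{1}) \to \mathcal{H}_{KZ}(\underline{\rho}_{\underline{k}})$ is a $C$-algebra isomorphism. It then remains to identify $\mathcal{H}_{KZ}(\mathbf{1})$ with $C[T']$ for a generator $T'$ supported on $KZ\alpha^{-1}KZ$; this is the standard spherical Hecke algebra computation, where a direct convolution calculation using a chosen set of coset representatives produces a triangular relation of the form $\mathbf{1}_{KZ\alpha^{-1}KZ}^{*n} = \mathbf{1}_{KZ\alpha^{-n}KZ} + (\text{terms supported on } KZ\alpha^{-m}KZ,\ m<n),$ so the powers of $T'$ span the natural double-coset basis triangularly and generate $\mathcal{H}_{KZ}(\mathbf{1})$ freely.

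For the $\mathcal{O}_C$-statement the inclusion $\mathcal{O}_C[T] \hookrightarrow \mathcal{H}_{KZ}(\underline{\rho}_{\underline{k}}^0)$ is immediate, since $T$ has already been shown to preserve the integral lattice $ind_{KZ}^{G}\underline{\rho}_{\underline{k}}^0$. For the reverse inclusion I would take $\varphi \in \mathcal{H}_{KZ}(\underline{\rho}_{\underline{k}}^0)$, view it inside $\mathcal{H}_{KZ}(\underline{\rho}_{\underline{k}})$, and write $\varphi = \sum_{n=0}^{n_0} c_n T^n$ with $c_n \in C$ using the $C$-case. Evaluating at $\alpha^{-n_0}$ isolates the top-degree coefficient: by (\ref{eq:U is rho}) this value equals $c_{n_0}\cdot U_{\underline{k}}^{n_0} = c_{n_0}\cdot \underline{\rho}_{\underline{k}}(\alpha^{-n_0})$, and $U_{\underline{k}}^{n_0}$ is diagonal in the basis $\{e_{\underline{k},\underline{i}}\}$ with a $1$ in the entry corresponding to $\underline{i}=\underline{k}$. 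Demanding that $\varphi(\alpha^{-n_0})$ preserve $V_{\underline{\rho}_{\underline{k}}^0}$ then forces $c_{n_0}\in \mathcal{O}_C$, and subtracting $c_{n_0}T^{n_0}$ strictly lowers the top degree, closing an induction on $n_0$.

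The main technical point to watch is this integrality extraction: one needs a coordinate in which $U_{\underline{k}}^{n_0}$ acts as a unit of $\mathcal{O}_C$ rather than as a positive power of $\varpi$, and the entry $1$ at $\underline{i}=\underline{k}$ is exactly the leverage that makes this possible. A secondary, essentially bookkeeping, check is that iterated convolution $T\ast T^{n_0-1}$ introduces no denominators at the integral level, so that the induction step honestly takes place inside $\mathcal{O}_C[T]$ and not merely inside $C[T]$.
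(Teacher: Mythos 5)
Your proposal is correct and follows essentially the same route as the paper: the $C$-case via Lemma~\ref{lem:isomorphism of Hecke algebras in the abs irred case} together with the triangular (monic) expression of the double-coset basis elements in powers of the degree-one operator (the paper phrases this through the $(q+1)$-regular tree recursions $T_1T_{n-1}=T_n+qT_{n-2}$), and the $\mathcal{O}_C$-case by extracting the leading coefficient on the deepest coset, where $U_{\underline{k}}$ acts by $1$ on $e_{\underline{k},\underline{k}}$, followed by induction on the degree. Evaluating the kernel at $\alpha^{-n_0}$ rather than the function $p(T)([1,v])$ at $\alpha^{n_0}$ is only a cosmetic difference, equivalent under Lemma~\ref{lem:hecke algebra is convolution}.
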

\begin{proof}
The space $V_{\underline{\rho}_{\underline{k}}}$ is an absolutely
irreducible $\mathfrak{g}\otimes_{\mathbb{Q}_{p}}C$-module, hence
by Lemma \ref{lem:isomorphism of Hecke algebras in the abs irred case},
$\iota_{\underline{\rho}_{\underline{k}}}$ is an isomorphism of $C$-algebras.
Lemma \ref{lem:hecke algebra is convolution} shows that there exists
a unique morphism of $C$-algebras $u_{\underline{\rho}_{\underline{k}}}:\mathcal{H}_{C}(KZ,G)\rightarrow\mathcal{H}_{\underline{\rho}_{\underline{k}}}(KZ,G)$
making the following diagram commute
\begin{equation}
\xymatrix{\mathcal{H}_{KZ}(C)\ar[r]^{\sim}\ar[d]^{\iota_{\underline{\rho}_{\underline{k}}}} & \mathcal{H}_{C}(KZ,G)\ar[d]^{u_{\underline{\rho}_{\underline{k}}}}\\
\mathcal{H}_{KZ}(\underline{\rho}_{\underline{k}})\ar[r]^{\sim} & \mathcal{H}_{\underline{\rho}_{\underline{k}}}(KZ,G)
}
\label{eq:hecke isomorphism}
\end{equation}
By construction, this morphism is an isomorphism of $C$-algebras.
Denote by $T_{1}\in\mathcal{H}_{C}(KZ,G)$ the element corresponding
to $\mathbf{1}_{KZ\alpha^{-1}KZ}\in\mathcal{H}_{KZ}(C)$ by Frobenius
reciprocity.

If $\varphi\in\mathcal{H}_{KZ}(C)$, then as it has compact support,
by the Cartan decomposition (\ref{eq:Cartan Decomposition}), it is
supported on $\coprod_{i=0}^{n}KZ\alpha^{-i}KZ$ for some integer
$n$. As $\varphi$ is $KZ$-bi-invariant (recall that $C$ is the
trivial representation), its restriction to each $KZ\alpha^{-i}KZ$
is constant, hence we may write $\varphi=\sum_{i=0}^{n}\varphi_{i}\cdot\mathbf{1}_{KZ\alpha^{-i}KZ}$.
Let $T_{i}\in\mathcal{H}_{C}(KZ,G)$ be the operator corresponding
to $\mathbf{1}_{KZ\alpha^{-i}KZ}$ by Frobenius reciprocity. Then
we see that the $T_{n}$'s span $\mathcal{H}_{C}(KZ,G)$ over $C$.
Geometrically, $T_{n}$ is the operator associating to a vertex $s$
the sum of the vertices at distance $n$ from $s$: this is because
\[
\mathbf{1}_{KZ\alpha^{-n}KZ}=\sum_{KZx\in KZ\backslash KZ\alpha^{-n}KZ}\mathbf{1}_{KZx}=
\]
\[
=\sum_{KZx\in KZ\backslash KZ\alpha^{-n}KZ}[x^{-1},1]=\sum_{KZx\in KZ\backslash KZ\alpha^{-n}KZ}x^{-1}\cdot[1,1]
\]
and then the $x^{-1}s_{0}$ are all distinct and give all vertices
$s'\in\mathcal{T}_{0}$ such that $s'$ is $KZ$-equivalent to $s_{n}=\alpha^{-n}s_{0}$.
This means that $(s_{0},s')$ is equivalent to $(s_{0},s_{n})$, which
is precisely our assertion. From the geometrical description of $T_{n}$,
one gets directly, since the tree $\mathcal{T}$ is $(q+1)$-regular,
that
\begin{eqnarray*}
T_{1}^{2} & = & T_{2}+(q+1)Id\\
T_{1}T_{n-1} & = & T_{n}+qT_{n-2}\quad\quad\forall n\ge3
\end{eqnarray*}

It follows that for all $n$, $T_{n}\in\mathcal{O}_{C}[T_{1}]$ is
monic of degree $n$. In particular, $\mathcal{H}_{C}(KZ,G)\simeq C[T_{1}]$.
Since $u_{\underline{\rho}_{\underline{k}}}(T_{1})=T$, it follows
that $\mathcal{H}_{\underline{\rho}_{\underline{k}}}(KZ,G)\simeq C[T]$.

Let us show that the restriction of this isomorphism to $\mathcal{H}_{\underline{\rho}_{\underline{k}}^{0}}(KZ,G)$
has image $\mathcal{O}_{C}[T]$.

As $T\in\mathcal{H}_{\underline{\rho}_{\underline{k}}^{0}}(KZ,G)$,
clearly $\mathcal{O}_{C}[T]$ is contained in the image. Let $p(T)\in C[T]$
be a polynomial corresponding to an element in $\mathcal{H}_{\underline{\rho}_{\underline{k}}^{0}}(KZ,G)$.

Assume $\deg(p)=n$, and let $a_{n}$ be the leading coefficient,
i.e. $p(T)=a_{n}T^{n}+p_{n-1}(T)$, where $\deg(p_{n-1})=n-1$. It
follows that $p(T)=a_{n}T_{n}+q_{n-1}(T)$, for some $q$ with $\deg(q_{n-1})=n-1$.

We recall that $T_{n}$ is the image under the natural isomorphisms
of $\mathbf{1}_{KZ\alpha^{-n}KZ}\in H_{KZ}(C)$, which maps to $\mathbf{1}_{KZ\alpha^{-n}KZ}\cdot\underline{\rho}_{\underline{k}}\in H_{KZ}(\underline{\rho}_{\underline{k}})$,
finally mapping to
\[
T_{n}([g,v])=\sum_{xKZ\in G/KZ}[gx,\mathbf{1}_{KZ\alpha^{-n}KZ}(x^{-1})\underline{\rho}_{\underline{k}}(x^{-1})(v)]=
\]
\[
=\sum_{xKZ\in KZ\alpha^{-n}KZ/KZ}[gx,\underline{\rho}_{\underline{k}}(x^{-1})(v)]
\]

Since $\alpha^{n}\in KZ\alpha^{-n}KZ$, and polynomials of order less
than $n$ are supported on $\coprod_{i=0}^{n-1}KZ\alpha^{-i}KZ$,
it follows that for any $v\in\underline{\rho}_{\underline{k}}$, one
has
\[
(p(T)([1,v]))(\alpha^{n})=\left(a_{n}T_{n}([1,v])\right)(\alpha^{n})=a_{n}\underline{\rho}_{\underline{k}}(\alpha^{-n})(v)=a_{n}U_{\underline{k}}^{n}(v)
\]

where the right most equality follows from (\ref{eq:U is rho}).

In particular, taking $v=\bigotimes_{\sigma:F\hookrightarrow C}y_{\sigma}^{k_{\sigma}}$,
we see that $v\in\underline{\rho}_{\underline{k}}^{0}$, hence $[1,v]\in ind_{KZ}^{G}\underline{\rho}_{\underline{k}}^{0}$.
As we assume $p(T)\in\mathcal{H}_{\underline{\rho}_{\underline{k}}^{0}}(KZ,G)=End_{\mathcal{O}_{C}[G]}(ind_{KZ}^{G}\underline{\rho}_{\underline{k}}^{0})$,
it follows that $p(T)([1,v])\in ind_{KZ}^{G}\underline{\rho}_{\underline{k}}^{0}$,
hence $a_{n}U_{\underline{k}}^{n}(v)=(p(T)([1,v]))(\alpha^{n})\in\underline{\rho}_{\underline{k}}^{0}$.
But, by definition of $U$, we see that $U_{\underline{k}}(v)=v$,
hence $a_{n}v\in\underline{\rho}_{\underline{k}}^{0}$.

However, by definition of $\underline{\rho}_{\underline{k}}^{0}$,
this is possible if and only if $a_{n}\in\mathcal{O}_{C}$. Therefore,
we see that $a_{n}T^{n}\in\mathcal{O}_{C}[T]$, and it suffices to
prove the claim for $p(T)-a_{n}T^{n}=p_{n-1}(T)$, which is a polynomial
of degree less than $n$.

Proceeding by induction, where the induction basis consists of constant
polynomials, which can be integral if and only if they belong to $\mathcal{O}_{C}$,
we conclude that $p(T)\in\mathcal{O}_{C}[T]$.
\end{proof}

\subsection{\label{subsec:Formulas}Formulas}

We keep the notations of Sections \ref{subsec:-algebraic-representations-of G}
and \ref{subsec:Lattices}. For $0\le m\le n$, we denote by $[\cdot]_{m}:I_{n}\rightarrow I_{m}$
the ``truncation'' map, defined by:
\[
\left[\sum_{i=0}^{n-1}\varpi^{i}[\mu_{i}]\right]_{m}=\begin{cases}
\sum_{i=0}^{m-1} & \varpi^{i}[\mu_{i}]\quad m\ge1\\
0 & m=0
\end{cases}
\]

For $\mu\in I_{n}$, we denote
\[
\lambda_{\mu}=\frac{\mu-[\mu]_{n-1}}{\varpi^{n-1}}\in I_{1}
\]

so that if $\mu=\sum_{i=0}^{n-1}\varpi^{i}[\mu_{i}]$, then $\lambda_{\mu}=[\mu_{n-1}]$.

We then have the following two results (see \cite{breuil2003quelques2,de2013existence}),
where $\psi$ denotes the function defined in Lemma \ref{lem:existence of the Hecke operator}.
\begin{lem}
\label{lem: T action on left side}Let $n\in\mathbb{N},\mu\in I_{n}$,
and let $v\in V_{\underline{\rho}_{\underline{k}}^{0}}$. We have:
\[
T\left([g_{n,\mu}^{0},v]\right)=T^{+}\left([g_{n,\mu}^{0},v]\right)+T^{-}\left([g_{n,\mu}^{0},v]\right)
\]

where
\[
T^{+}\left([g_{n,\mu}^{0},v]\right):=\sum_{\lambda\in I_{1}}\left[g_{n+1,\mu+\varpi^{n}\lambda,}^{0}\left(\underline{\rho}_{\underline{k}}(w)\circ\psi(\alpha^{-1})\circ\underline{\rho}_{\underline{k}}(w_{\lambda})\right)(v)\right]
\]
and
\[
T^{-}\left([g_{n,\mu}^{0},v]\right):=\begin{cases}
\left[g_{n-1,[\mu]_{n-1}}^{0},\left(\underline{\rho}_{\underline{k}}(ww_{-\lambda_{\mu}})\circ\psi(\alpha^{-1})\right)(v)\right] & n\ge1\\{}
[\alpha,\psi(\alpha^{-1})(v)] & n=0
\end{cases}
\]
\end{lem}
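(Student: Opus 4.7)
The approach is to apply the Frobenius-reciprocity formula (\ref{eq:T action Frobenius reciprocity}) to $f = [g_{n,\mu}^0, v]$, which gives
\[
T([g_{n,\mu}^0, v]) = \sum_{xKZ \in G/KZ} [g_{n,\mu}^0 x,\, \psi(x^{-1})(v)].
\]
Since $\psi$ is supported on $KZ\alpha^{-1}KZ$, only the $x$'s with $x \in KZ\alpha KZ$ contribute. The Bruhat--Tits tree being $(q+1)$-regular at $s_0$, a convenient system of coset representatives for $KZ\alpha KZ / KZ$ is the set of $q+1$ neighbors of $s_0$, namely $\{\alpha\} \cup \{g_{1,\lambda}^0 : \lambda \in I_1\}$ (this is exactly $S_0^1 \coprod S_1^0$ in the paper's notation). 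The $\lambda$-summation will produce $T^+$, while the single term coming from $x = \alpha$ will produce $T^-$.

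For the $T^+$ contribution, a direct $2\times 2$ matrix product gives $g_{n,\mu}^0\, g_{1,\lambda}^0 = g_{n+1,\, \mu + \varpi^n \lambda}^0$, and another direct computation yields the factorization $(g_{1,\lambda}^0)^{-1} = w\,\alpha^{-1}\,w_\lambda$. Since $w, w_\lambda \in K \subset KZ$, part (i) of Lemma \ref{lem:existence of the Hecke operator} rewrites
\[
\psi((g_{1,\lambda}^0)^{-1}) = \underline{\rho}_{\underline{k}}(w) \circ \psi(\alpha^{-1}) \circ \underline{\rho}_{\underline{k}}(w_\lambda),
\]
and summing over $\lambda \in I_1$ produces exactly the claimed formula for $T^+$.

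For the $T^-$ contribution, the case $n = 0$ is immediate since $g_{0,0}^0$ is the identity, so the surviving term is $[\alpha, \psi(\alpha^{-1})(v)]$. For $n \geq 1$, decompose $\mu = [\mu]_{n-1} + \varpi^{n-1}\lambda_\mu$, note the one-line identity $w\,w_{-\lambda} = \begin{pmatrix} 1 & \lambda \\ 0 & 1 \end{pmatrix}$, and verify
\[
g_{n,\mu}^0\, \alpha = g_{n-1,\, [\mu]_{n-1}}^0 \cdot (w\, w_{-\lambda_\mu}) \cdot \begin{pmatrix}\varpi & 0 \\ 0 & \varpi\end{pmatrix}.
\]
Since the central factor lies in $Z$ and acts trivially on $V_{\underline{\rho}_{\underline{k}}}$, the rule $[gk, v] = [g, \underline{\rho}_{\underline{k}}(k)(v)]$ delivers the stated $T^-$ formula. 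The whole argument is essentially mechanical once the two factorizations $(g_{1,\lambda}^0)^{-1} = w\,\alpha^{-1}\,w_\lambda$ and $g_{n,\mu}^0\, \alpha = g_{n-1,[\mu]_{n-1}}^0\, (ww_{-\lambda_\mu})\, (\varpi I)$ are in hand; identifying the correct coset representatives and carrying out these two matrix identities is the only real obstacle.
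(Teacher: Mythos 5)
Your proof is correct, and it is essentially the standard argument: the paper itself omits the proof and refers to Breuil and de Ieso, where the lemma is established by exactly this computation — apply the convolution formula (\ref{eq:T action Frobenius reciprocity}), restrict to the $q+1$ coset representatives $\{\alpha\}\cup\{g_{1,\lambda}^{0}:\lambda\in I_{1}\}$ of $KZ\alpha^{-1}KZ/KZ$, and use the two factorizations $(g_{1,\lambda}^{0})^{-1}=w\alpha^{-1}w_{\lambda}$ and $g_{n,\mu}^{0}\alpha=g_{n-1,[\mu]_{n-1}}^{0}\,(ww_{-\lambda_{\mu}})\,(\varpi I)$ together with $[g\kappa,v]=[g,\underline{\rho}_{\underline{k}}(\kappa)v]$ and the triviality of the central action. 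Both matrix identities and the coset bookkeeping in your write-up check out, so nothing is missing.
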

~
\begin{lem}
\label{lem:T action on right side}Let $n\in\mathbb{N},\mu\in I_{n}$,
and let $v\in V_{\underline{\rho}_{\underline{k}}^{0}}$. We have:
\[
T\left([g_{n,\mu}^{1},v]\right)=T^{+}\left([g_{n,\mu}^{1},v]\right)+T^{-}\left([g_{n,\mu}^{1},v]\right)
\]

where
\[
T^{+}\left([g_{n,\mu}^{1},v]\right):=\sum_{\lambda\in I_{1}}\left[g_{n+1,\mu+\varpi^{n}\lambda,}^{1}\left(\psi(\alpha^{-1})\circ\underline{\rho}_{\underline{k}}(w_{\lambda}w)\right)(v)\right]
\]
and
\[
T^{-}\left([g_{n,\mu}^{1},v]\right):=\begin{cases}
\left[g_{n-1,[\mu]_{n-1}}^{1},\left(\underline{\rho}_{\underline{k}}(w_{-\lambda_{\mu}})\circ\psi(\alpha^{-1})\circ\underline{\rho}_{\underline{k}}(w)\right)(v)\right] & n\ge1\\
\left[Id,\left(\underline{\rho}_{\underline{k}}(w)\circ\psi(\alpha^{-1})\circ\underline{\rho}_{\underline{k}}(w)\right)(v)\right] & n=0
\end{cases}
\]
\end{lem}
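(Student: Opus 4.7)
The plan is to deduce Lemma \ref{lem:T action on right side} from Lemma \ref{lem: T action on left side} by exploiting the identity $g_{n,\mu}^{1} = \beta g_{n,\mu}^{0} w$ together with the $G$-equivariance of $T$. Since $w \in KZ$, one has $[g_{n,\mu}^{1}, v] = [\beta g_{n,\mu}^{0} w, v] = \beta \cdot [g_{n,\mu}^{0}, v']$, where $v' := \underline{\rho}_{\underline{k}}(w)(v)$. As $T$ commutes with the action of $G$, this yields $T([g_{n,\mu}^{1}, v]) = \beta \cdot T([g_{n,\mu}^{0}, v'])$, and I then apply Lemma \ref{lem: T action on left side} to expand the right-hand side into its $T^{+}$ and $T^{-}$ components, transporting each summand through the action of $\beta$.

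For the forward contribution, each summand $[g_{n+1, \mu + \varpi^{n}\lambda}^{0}, \underline{\rho}_{\underline{k}}(w \alpha^{-1} w_{\lambda})(v')]$ becomes $[\beta g_{n+1, \mu + \varpi^{n}\lambda}^{0}, \underline{\rho}_{\underline{k}}(w \alpha^{-1} w_{\lambda})(v')]$ after applying $\beta$. Using the identity $\beta g_{m,\nu}^{0} = g_{m,\nu}^{1} w$, which follows from $g_{m,\nu}^{1} = \beta g_{m,\nu}^{0} w$ together with $w^{2} = I$, and the formula $[g\kappa, u] = [g, \underline{\rho}_{\underline{k}}(\kappa)(u)]$ for $\kappa \in KZ$, I rewrite this as $[g_{n+1, \mu + \varpi^{n}\lambda}^{1}, \underline{\rho}_{\underline{k}}(w \cdot w \alpha^{-1} w_{\lambda} \cdot w)(v)]$. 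Cancelling $w^{2} = I$ and invoking $\psi(\alpha^{-1}) = \underline{\rho}_{\underline{k}}(\alpha^{-1})$ from equation (\ref{eq:U is rho}), the asserted formula for $T^{+}([g_{n,\mu}^{1}, v])$ drops out. The backward contribution for $n \geq 1$ is handled analogously: the term $[g_{n-1, [\mu]_{n-1}}^{0}, \underline{\rho}_{\underline{k}}(w w_{-\lambda_{\mu}} \alpha^{-1})(v')]$ converts to $[g_{n-1, [\mu]_{n-1}}^{1}, \underline{\rho}_{\underline{k}}(w_{-\lambda_{\mu}} \alpha^{-1} w)(v)]$ by the same manipulation.

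The main obstacle, though still essentially computational, is the boundary case $n = 0$, since $g_{0,0}^{0}$ is the identity and Lemma \ref{lem: T action on left side} specializes to $T^{-}([1, v']) = [\alpha, \underline{\rho}_{\underline{k}}(\alpha^{-1})(v')]$. Applying $\beta$ yields $[\beta \alpha, \underline{\rho}_{\underline{k}}(\alpha^{-1})(v')]$, and a direct matrix computation gives $\beta \alpha = \varpi w \in KZ$, so this can be rewritten as $[\mathrm{Id}, \underline{\rho}_{\underline{k}}(\beta w)(v)]$. To close the loop I verify that $\beta w = \alpha$ and that $w \alpha^{-1} w$ agrees with $\alpha$ up to the scalar $\varpi^{-1} I_{2}$, which acts trivially on $V_{\underline{\rho}_{\underline{k}}}$ since $\varpi I_{2}$ does. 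This reconciles $\beta \cdot T^{-}([1, v'])$ with the claimed expression $[\mathrm{Id}, (\underline{\rho}_{\underline{k}}(w) \circ \psi(\alpha^{-1}) \circ \underline{\rho}_{\underline{k}}(w))(v)]$ and completes the proof.
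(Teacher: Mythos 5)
Your proposal is correct. The paper itself does not prove this lemma --- both Lemma \ref{lem: T action on left side} and Lemma \ref{lem:T action on right side} are quoted from \cite{breuil2003quelques2,de2013existence} --- but your argument is exactly the mechanism the paper records right after the two lemmata in the identities $T^{\pm}([g_{n,\mu}^{1},v])=\beta T^{\pm}([g_{n,\mu}^{0},\underline{\rho}_{\underline{k}}(w)(v)])$, run in the reverse logical direction: transport through $g_{n,\mu}^{1}=\beta g_{n,\mu}^{0}w$, the $G$-equivariance of $T$, the relation $[g\kappa,u]=[g,\underline{\rho}_{\underline{k}}(\kappa)(u)]$ for $\kappa\in KZ$, and $\psi(\alpha^{-1})=\underline{\rho}_{\underline{k}}(\alpha^{-1})$ from (\ref{eq:U is rho}). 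Your handling of the boundary case $n=0$ is also right: $\beta\alpha=\varpi w\in KZ$, and since the central scalar $\varpi I_{2}$ (hence also $\varpi^{-1}I_{2}$) acts trivially on $V_{\underline{\rho}_{\underline{k}}}$, the element $[\beta\alpha,\psi(\alpha^{-1})(\underline{\rho}_{\underline{k}}(w)v)]$ indeed coincides with $\left[\mathrm{Id},\left(\underline{\rho}_{\underline{k}}(w)\circ\psi(\alpha^{-1})\circ\underline{\rho}_{\underline{k}}(w)\right)(v)\right]$, as you verified via $\beta w=\alpha$ and $w\alpha^{-1}w=\varpi^{-1}\alpha$.
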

By using the equality $g_{n,\mu}^{1}=\beta g_{n,\mu}^{0}w$, these
two Lemmata yield the following two equalities:
\begin{eqnarray*}
T^{+}([g_{n,\mu}^{1},v]) & = & \beta T^{+}([g_{n,\mu}^{0},\underline{\rho}_{\underline{k}}(w)(v)])\\
T^{-}([g_{n,\mu}^{1},v]) & = & \beta T^{-}([g_{n,\mu}^{0},\underline{\rho}_{\underline{k}}(w)(v)])
\end{eqnarray*}
and also the following result
\begin{cor}
Let $n\in\mathbb{N}$, $\mu,\lambda\in I_{n}$, $i,j\in\{0,1\}$ and
$v_{1},v_{2}\in V_{\underline{\rho}_{\underline{k}}^{0}}$. If $i\ne j$
or if $\mu\ne\lambda$, then $T^{+}([g_{n,\mu}^{i},v_{1}])$ and $T^{+}([g_{n,\lambda}^{j},v_{2}])$
have disjoint supports.
\end{cor}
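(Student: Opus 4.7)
The plan is to read off the supports directly from the explicit formulas in Lemmata \ref{lem: T action on left side} and \ref{lem:T action on right side}, and then invoke the uniqueness of the coset decomposition (\ref{eq:system of representatives for G/KZ}) of $G$ into classes $g_{m,\nu}^{j}KZ$. The argument breaks into two combinatorial cases which I would handle in parallel.

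First I would observe that, by Lemma \ref{lem: T action on left side}, the support of $T^{+}([g_{n,\mu}^{0},v_{1}])$ is contained in the finite set $\{g_{n+1,\mu+\varpi^{n}[\zeta]}^{0}KZ : \zeta\in\kappa_{F}\}\subseteq S_{n+1}^{0}$, and symmetrically, by Lemma \ref{lem:T action on right side}, the support of $T^{+}([g_{n,\mu}^{1},v_{1}])$ lies in the analogous set inside $S_{n+1}^{1}$. The case $i\ne j$ then follows immediately, since $S_{n+1}^{0}\cap S_{n+1}^{1}=\emptyset$ by the Cartan decomposition (\ref{eq:Cartan Decomposition}).

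For the case $i=j$ with $\mu\ne\lambda$ in $I_{n}$, I would compare the two support sets digit-by-digit in the Teichm\"uller expansion. Writing $\mu=\sum_{s=0}^{n-1}\varpi^{s}[\mu_{s}]$ and $\lambda=\sum_{s=0}^{n-1}\varpi^{s}[\lambda_{s}]$, the hypothesis $\mu\ne\lambda$ gives a digit $\mu_{s_{0}}\ne\lambda_{s_{0}}$ with $s_{0}\le n-1$. Appending any further term $\varpi^{n}[\zeta]$ cannot alter this discrepancy, so $\mu+\varpi^{n}[\zeta]$ and $\lambda+\varpi^{n}[\zeta']$ remain distinct as elements of $I_{n+1}$ for every choice of $\zeta,\zeta'\in\kappa_{F}$. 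The uniqueness in (\ref{eq:system of representatives for G/KZ}) then translates this into distinct left cosets $g_{n+1,\ast}^{i}KZ$, yielding disjoint supports.

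I do not anticipate any real obstacle here: the statement is essentially a bookkeeping consequence of the explicit $T^{+}$-formulas together with the uniqueness of the parameterization of $G/KZ$ by the representatives $g_{n,\mu}^{i}$, and the two cases above are almost orthogonal. The only thing one must be slightly careful about is that passing from $I_{n}$ to $I_{n+1}$ genuinely only adds one new Teichm\"uller digit at position $n$, which is what makes the digit-comparison argument clean.
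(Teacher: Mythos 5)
Your argument is correct and is essentially the paper's own (implicit) justification: the corollary is stated there as an immediate consequence of Lemmata \ref{lem: T action on left side} and \ref{lem:T action on right side}, whose $T^{+}$-formulas place the supports in $\{g_{n+1,\mu+\varpi^{n}[\zeta]}^{i}KZ\}\subseteq S_{n+1}^{i}$, and disjointness then follows from the uniqueness of the representatives in (\ref{eq:system of representatives for G/KZ}) exactly as you argue. Your two-case split ($i\ne j$ via $S_{n+1}^{0}\cap S_{n+1}^{1}=\emptyset$, and $\mu\ne\lambda$ via the Teichm\"uller-digit comparison) is the same bookkeeping the paper leaves to the reader.
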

The following Lemma is a simple generalization of \cite{breuil2003quelques2},
Lemma 2.2.2.
\begin{lem}
Let $v=\sum_{\underline{0}\le\underline{i}\le\underline{k}}c_{\underline{i}}e_{\underline{k},\underline{i}}\in V_{\rho_{\underline{k}}^{0}}$
and $\lambda\in\mathcal{O}_{F}$. We have:
\begin{equation}
\left(\rho_{\underline{k}}(w)\circ\psi(\alpha^{-1})\circ\rho_{\underline{k}}(w_{\lambda})\right)(v)=\sum_{\underline{0}\le\underline{j}\le\underline{k}}\left(\varpi^{\underline{j}}\sum_{\underline{j}\le\underline{i}\le\underline{k}}c_{\underline{i}}{\underline{i} \choose \underline{j}}(-\lambda)^{\underline{i}-\underline{j}}\right)e_{\underline{k},\underline{j}}\label{eq:T+ formula}
\end{equation}

\begin{equation}
\left(\rho_{\underline{k}}(ww_{\lambda})\circ\psi(\alpha^{-1})\right)(v)=\sum_{\underline{0}\le\underline{j}\le\underline{k}}\left(\sum_{\underline{j}\le\underline{i}\le\underline{k}}\varpi^{\underline{k}-\underline{i}}{\underline{i} \choose \underline{j}}c_{\underline{i}}(-\lambda)^{\underline{i}-\underline{j}}\right)e_{\underline{k},\underline{j}}\label{eq:T- formula}
\end{equation}
\begin{equation}
\psi(\alpha^{-1})(v)=\sum_{\underline{0}\le\underline{j}\le\underline{k}}\varpi^{\underline{k}-\underline{j}}c_{\underline{j}}e_{\underline{k},\underline{j}}\label{eq:psi formula}
\end{equation}
\end{lem}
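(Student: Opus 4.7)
The proof will be a direct calculation using the explicit action formula (\ref{eq:formula for G algebraic action}) and decomposing each step factor by factor over $\sigma \in S$. A preliminary observation is that $\chi(w)=\chi(w_\lambda)=1$ (since both matrices have determinant of $p$-adic valuation zero), so $\rho_{\underline{k}}(w)=\underline{\rho}_{\underline{k}}(w)$ and $\rho_{\underline{k}}(w_\lambda)=\underline{\rho}_{\underline{k}}(w_\lambda)$; combined with $\psi(\alpha^{-1})=U_{\underline{k}}$ from Lemma \ref{lem:existence of the Hecke operator}(ii), this justifies the mixed notation in the statement. Formula (\ref{eq:psi formula}) is then immediate: since $U_{\underline{k}}$ is diagonal on the monomial basis and sends $e_{\underline{k},\underline{j}}$ to $\varpi^{\underline{k}-\underline{j}}e_{\underline{k},\underline{j}}$, applying it to $v$ and collecting terms gives the claim.

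For formula (\ref{eq:T+ formula}), I would first compute $\rho_{\underline{k}}(w_\lambda)(e_{\underline{k},\underline{i}})$ factor by factor: each $\sigma$-component is $y_\sigma^{k_\sigma-i_\sigma}(x_\sigma-\sigma(\lambda)y_\sigma)^{i_\sigma}$, which the binomial theorem expands as $\sum_{n_\sigma=0}^{i_\sigma}{i_\sigma \choose n_\sigma}(-\sigma(\lambda))^{i_\sigma-n_\sigma}e_{k_\sigma,k_\sigma-n_\sigma}$. Tensoring over $\sigma$ and setting $\underline{l}=\underline{k}-\underline{n}$ rewrites this as $\sum_{\underline{k}-\underline{i}\le\underline{l}\le\underline{k}}{\underline{i} \choose \underline{k}-\underline{l}}(-\lambda)^{\underline{i}-\underline{k}+\underline{l}}e_{\underline{k},\underline{l}}$. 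Applying $\psi(\alpha^{-1})$ then multiplies each $e_{\underline{k},\underline{l}}$ by $\varpi^{\underline{k}-\underline{l}}$, and applying $\rho_{\underline{k}}(w)$ swaps $x_\sigma\leftrightarrow y_\sigma$, i.e., sends $e_{\underline{k},\underline{l}}\mapsto e_{\underline{k},\underline{k}-\underline{l}}$. Substituting $\underline{j}=\underline{k}-\underline{l}$ collapses the exponents into the clean form ${\underline{i} \choose \underline{j}}(-\lambda)^{\underline{i}-\underline{j}}\varpi^{\underline{j}}$, and the desired formula results after expanding $v=\sum c_{\underline{i}} e_{\underline{k},\underline{i}}$ linearly and interchanging the order of summation over $\underline{i}$ and $\underline{j}$.

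Formula (\ref{eq:T- formula}) is slightly simpler: a direct multiplication gives $ww_\lambda=\left(\begin{array}{cc}1&-\lambda\\0&1\end{array}\right)$, which acts by a single binomial expansion, so $\rho_{\underline{k}}(ww_\lambda)(e_{\underline{k},\underline{i}})=\sum_{\underline{j}\le\underline{i}}{\underline{i} \choose \underline{j}}(-\lambda)^{\underline{i}-\underline{j}}e_{\underline{k},\underline{j}}$. Composing with $\psi(\alpha^{-1})$ applied first (which contributes the scalar $\varpi^{\underline{k}-\underline{i}}$ on each $e_{\underline{k},\underline{i}}$) and swapping the sums over $\underline{i}$ and $\underline{j}$ yields the claim. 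The only real obstacle is the index bookkeeping in (\ref{eq:T+ formula}), where one must carefully track the substitution $\underline{j}=\underline{k}-\underline{l}$ induced by the action of $w$ before swapping the summations; everything else amounts to routine binomial manipulation performed separately in each tensor factor.
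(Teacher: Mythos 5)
Your proposal is correct and takes essentially the same approach as the paper: a factor-by-factor application of the explicit action formula, the diagonality of $\psi(\alpha^{-1})=U_{\underline{k}}$, and a binomial expansion — the paper's own proof of the $T^-$ formula likewise identifies $ww_\lambda=\left(\begin{smallmatrix}1&-\lambda\\0&1\end{smallmatrix}\right)$ and expands, and treats the $\psi$ formula as immediate. The only difference is that the paper cites de Ieso for the $T^+$ formula while you verify it directly by the same routine computation (your index substitutions check out), and your remark that $\chi(w)=\chi(w_\lambda)=1$, so the twisted and untwisted actions agree on these matrices, is a correct clarification.
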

\begin{proof}
Equation (\ref{eq:T+ formula}) is proved in \cite{de2013existence}
and equation (\ref{eq:psi formula}) is immediate. For equation (\ref{eq:T- formula}),
we note that by equation (\ref{eq:formula for G algebraic action}),
we have for any $\sigma\in S$ and any $0\le i_{\sigma}\le k_{\sigma}$:
\[
\left(w\circ w_{\lambda}\circ U_{k_{\sigma}}\right)(e_{k_{\sigma},i_{\sigma}})=\left(\begin{array}{cc}
1 & -\lambda\\
0 & 1
\end{array}\right)\left(\sigma(\varpi)^{k_{\sigma}-i_{\sigma}}e_{k_{\sigma},i_{\sigma}}\right)=
\]
\[
=\sigma(\varpi)^{k_{\sigma}-i_{\sigma}}\cdot x^{k_{\sigma}-i_{\sigma}}(y+\sigma(-\lambda)x)^{i_{\sigma}}=
\]
\[
=\sigma(\varpi)^{k_{\sigma}-i_{\sigma}}\cdot\sum_{j_{\sigma}=0}^{i_{\sigma}}{i_{\sigma} \choose j_{\sigma}}\sigma(-\lambda)^{i_{\sigma}-j_{\sigma}}x^{k_{\sigma}-j_{\sigma}}y^{j_{\sigma}}=
\]
\[
=\sigma(\varpi)^{k_{\sigma}-i_{\sigma}}\cdot\sum_{j_{\sigma}=0}^{i_{\sigma}}{i_{\sigma} \choose j_{\sigma}}\sigma(-\lambda)^{i_{\sigma}-j_{\sigma}}e_{k_{\sigma},j_{\sigma}}
\]

Using equation (\ref{eq:tensor product action}), we deduce that
\[
\left(\rho_{\underline{k}}(ww_{\lambda})\circ\psi(\alpha^{-1})\right)(v)=\sum_{\underline{0}\le i\le\underline{k}}c_{\underline{i}}\cdot\bigotimes_{\sigma\in S}\left(w\circ w_{\lambda}\circ U_{k_{\sigma}}\right)(e_{k_{\sigma},i_{\sigma}})=
\]
\[
=\sum_{\underline{0}\le\underline{i}\le\underline{k}}c_{\underline{i}}\cdot\bigotimes_{\sigma\in S}\left(\sigma(\varpi)^{k_{\sigma}-i_{\sigma}}\cdot\sum_{j_{\sigma}=0}^{i_{\sigma}}{i_{\sigma} \choose j_{\sigma}}\sigma(-\lambda)^{i_{\sigma}-j_{\sigma}}e_{k_{\sigma},j_{\sigma}}\right)=
\]
\[
=\sum_{\underline{0}\le\underline{i}\le\underline{k}}c_{\underline{i}}\cdot\prod_{\sigma\in S}\sigma(\varpi)^{k_{\sigma}-i_{\sigma}}\cdot\sum_{\underline{0}\le\underline{j}\le\underline{i}}\prod_{\sigma\in S}{i_{\sigma} \choose j_{\sigma}}\cdot\prod_{\sigma\in S}\sigma(-\lambda)^{i_{\sigma}-j_{\sigma}}e_{\underline{k},\underline{j}}=
\]
\[
=\sum_{\underline{0}\le\underline{j}\le\underline{k}}\left(\sum_{\underline{j}\le\underline{i}\le\underline{k}}\varpi^{\underline{k}-\underline{i}}\cdot{\underline{i} \choose \underline{j}}\cdot c_{\underline{i}}\cdot(-\lambda)^{\underline{i}-\underline{j}}\right)e_{\underline{k},\underline{j}}
\]
\end{proof}
This leads to the following corollary, which is a simple generalization
of \cite{breuil2003quelques2}, Corollary 2.2.3.
\begin{cor}
\label{cor:definition of coefficients}Let $m\in\mathbb{Z}_{>0}$,
$a\in C$, and for any $\mu\in I_{m}$ (resp. $\mu\in I_{m-1}$, resp.
$\mu\in I_{m+1}$), $v_{\mu}^{m}=\sum_{\underline{0}\le\underline{i}\le\underline{k}}c_{\underline{i},\mu}^{m}\cdot e_{\underline{k},\underline{i}}$
(resp. $v_{\mu}^{m-1}=\sum_{\underline{0}\le\underline{i}\le\underline{k}}c_{\underline{i},\mu}^{m-1}\cdot e_{\underline{k},\underline{i}}$,
resp. $v_{\mu}^{m+1}=\sum_{\underline{0}\le\underline{i}\le\underline{k}}c_{\underline{i},\mu}^{m+1}\cdot e_{\underline{k},\underline{i}}$)
an element of $\underline{\rho}_{\underline{k}}$. We denote
\[
f_{m}=\sum_{\mu\in I_{m}}[g_{m,\mu}^{0},v_{\mu}^{m}]
\]
\[
f_{m-1}=\sum_{\mu\in I_{m-1}}[g_{m-1,\mu}^{0},v_{\mu}^{m-1}]
\]
\[
f_{m+1}=\sum_{\mu\in I_{m+1}}[g_{m+1,\mu}^{0},v_{\mu}^{m+1}]
\]

Then
\[
T^{-}(f_{m+1})+T^{+}(f_{m-1})-af_{m}=\sum_{\mu\in I_{m}}\left[g_{m,\mu}^{0},\sum_{\underline{0}\le\underline{j}\le\underline{k}}C_{\underline{j},\mu}^{m}\cdot e_{\underline{k},\underline{j}}\right]
\]
where
\begin{equation}
\begin{split}
C_{\underline{j},\mu}^{m}=\sum_{\underline{j}\le\underline{i}\le\underline{k}}\varpi^{\underline{k}-\underline{i}}{\underline{i} \choose \underline{j}}\cdot\sum_{\lambda\in k_{F}}c_{\underline{i},\mu+\varpi^{m}[\lambda]}^{m+1}\cdot[\lambda]^{\underline{i}-\underline{j}}+\\
+\varpi^{\underline{j}}\cdot\sum_{\underline{j}\le\underline{i}\le\underline{k}}c_{\underline{i},[\mu]_{m-1}}^{m-1}{\underline{i} \choose \underline{j}}\left(-\lambda_{\mu}\right)^{\underline{i}-\underline{j}}-ac_{\underline{j},\mu}^{m}\label{eq:formula for the coefficients}
\end{split}
\end{equation}
\end{cor}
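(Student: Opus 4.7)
The plan is a direct computation: expand each of $T^{-}(f_{m+1})$, $T^{+}(f_{m-1})$, and $af_{m}$ in the basis of $ind_{KZ}^{G}\underline{\rho}_{\underline{k}}^{0}$ afforded by Proposition~\ref{prop:basis for compact induction}, restricted to the vertices $g_{m,\nu}^{0}$ for $\nu \in I_{m}$, and then collect the coefficients of each monomial $e_{\underline{k},\underline{j}}$.

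First I would regroup each summand of $T^{+}(f_{m-1})$ and $T^{-}(f_{m+1})$ according to the vertex on which it is supported, using \lemref{lem: T action on left side}. For $T^{+}(f_{m-1})$, each term $[g_{m-1,\mu}^{0}, v_{\mu}^{m-1}]$ contributes to the vertices $g_{m, \mu + \varpi^{m-1}\lambda}^{0}$ for $\lambda \in I_{1}$; reindexing by $\nu = \mu + \varpi^{m-1}\lambda \in I_{m}$ gives the unique description $\mu = [\nu]_{m-1}$, $\lambda = \lambda_{\nu}$. For $T^{-}(f_{m+1})$, each term $[g_{m+1,\mu}^{0}, v_{\mu}^{m+1}]$ contributes only at $g_{m, [\mu]_{m}}^{0}$; grouping by $\nu = [\mu]_{m} \in I_{m}$, the indices $\mu$ with $[\mu]_{m} = \nu$ are exactly $\nu + \varpi^{m}[\lambda]$ for $\lambda \in \kappa_{F}$, and then $\lambda_{\mu} = [\lambda]$. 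This produces exactly the single sum over $\lambda \in \kappa_{F}$ that appears in (\ref{eq:formula for the coefficients}).

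Once the expressions are consolidated at each vertex $g_{m,\nu}^{0}$, I would substitute the expansions $v_{\cdot}^{\cdot} = \sum_{\underline{i}} c_{\underline{i},\cdot}^{\cdot}\, e_{\underline{k},\underline{i}}$ and apply the explicit formulas of the previous lemma: formula (\ref{eq:T- formula}), with $\lambda$ replaced by $-[\lambda]$, yields the first line of (\ref{eq:formula for the coefficients}) after swapping the order of summation over $\lambda \in \kappa_{F}$ and $\underline{j} \le \underline{i} \le \underline{k}$, while (\ref{eq:T+ formula}) applied with $\lambda = \lambda_{\nu}$ yields the second summand. Subtracting $af_{m} = \sum_{\nu \in I_m}\bigl[g_{m,\nu}^{0}, a\sum_{\underline{j}} c_{\underline{j},\nu}^{m} e_{\underline{k},\underline{j}}\bigr]$ produces the final $-a\, c_{\underline{j},\nu}^{m}$ term, and the uniqueness of expansion in the basis of Proposition~\ref{prop:basis for compact induction} finishes the verification.

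There is no conceptual obstacle here: the corollary is bookkeeping layered on top of \lemref{lem: T action on left side} and the explicit formulas for the $T^{\pm}$-images. The one mildly delicate step is the reindexing in the $T^{-}$ contribution: the bijection $\mu \mapsto ([\mu]_{m}, \lambda_{\mu})$ between $I_{m+1}$ and $I_{m} \times \{[\lambda] : \lambda \in \kappa_{F}\}$ is precisely what converts the sum of $T^{-}$-images into the double sum of the corollary, with one variable of summation absorbed into the coefficient $C_{\underline{j}, \nu}^{m}$.
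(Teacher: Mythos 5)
Your computation is correct and is exactly the intended argument: the paper omits the proof of this corollary (citing it as a straightforward generalization of Breuil's Corollaire 2.2.3), and the standard verification is precisely your regrouping by target vertex via the bijections $(\mu,\lambda)\mapsto\mu+\varpi^{m-1}\lambda$ and $\mu\mapsto([\mu]_{m},\lambda_{\mu})$, followed by substituting formulas (\ref{eq:T+ formula}) and (\ref{eq:T- formula}) (the latter with $\lambda=-[\lambda]$, turning $(-\lambda)^{\underline{i}-\underline{j}}$ into $[\lambda]^{\underline{i}-\underline{j}}$) and collecting coefficients of $e_{\underline{k},\underline{j}}$. No gap; the only implicit point worth a word is that all three terms are supported on the type-$0$ vertices of level $m$, so comparing coefficients vertex by vertex does give the stated identity.
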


\section{A Criterion for Separability}

\subsection{The main result}

We adhere to the notations of Sections \ref{subsec:Lattices} and
\ref{subsec:Formulas} and fix an embedding $\iota:F\hookrightarrow C$.
Denote
\[
S^{+}=\{\sigma\in S\mid k_{\sigma}\ne0\}\subseteq S
\]

We partition $S^{+}$ with respect to the action of $\sigma\in S^{+}$
on the residue field of $F$. More precisely, for any $l\in\{0,\ldots,f-1\}$,
we let
\[
J_{l}=\left\{ \sigma\in S^{+}\mid\sigma(\lambda)=\iota\circ\varphi_{0}^{l}(\lambda)\quad\forall\lambda\in I_{1}\right\}
\]
where $I_{1}=\left\{ [\zeta]\mid\zeta\in\kappa_{F}\right\} $. In
particular, we remark that
\[
\coprod_{l=0}^{f-1}J_{l}=S^{+},\quad\forall l\in\{0,\ldots,f-1\}\quad\left|J_{l}\right|\le e.
\]

For any integer $i\in\mathbb{Z}$, we denote by $\overline{i}$ the
unique representative of $i\mod f$ in $\{0,\ldots,f-1\}$. We also
let, for any $\sigma\in J_{l}$, $\gamma_{\sigma}:=l$ and
\[
v_{\sigma}=\inf\left\{ i\mid1\le i\le f,J_{\overline{l+i}}\ne\emptyset\right\}
\]
that is the minimal power of Frobenius $\varphi_{0}$ needed to pass
from $J_{l}$ to another nonempty $J_{k}$.

Let $a\in\mathfrak{p}_{C}$. We let
\[
\Pi_{\underline{k},a}=\frac{ind_{KZ}^{G}\underline{\rho}_{\underline{k}}}{(T-a)(ind_{KZ}^{G}\underline{\rho}_{\underline{k}})}.
\]
This is a locally algebraic representation of $G$, which can be realized
as the tensor product of an algebraic representation with a smooth
representation. More precisely, we have the following result, which
is stated in \cite{de2013existence}.
\begin{prop}
Let $u_{\sigma}=\frac{k_{\sigma}}{2}$ for any $\sigma\in S$.

$(i)$ If $a\notin\{\pm((q+1)\varpi^{\underline{u}}\}$, then $\Pi_{\underline{k},a}$
is algebraicly irreducible and
\[
\Pi_{\underline{k},a}\simeq\underline{\rho}_{\underline{k}}\otimes Ind_{B}^{G}(\mbox{nr}(\lambda_{1}^{-1})\otimes\mbox{nr}(\lambda_{2}^{-1}))
\]
where $\lambda_{1},\lambda_{2}$ satisfy
\[
\lambda_{1}\lambda_{2}=\varpi^{\underline{k}},\quad\lambda_{1}+q\lambda_{2}=a
\]

$(ii)$ If $a\in\{\pm((q+1)\varpi^{\underline{u}}\}$, then we have
a short exact sequence
\[
0\rightarrow\underline{\rho}_{\underline{k}}\otimes\mbox{St}_{G}\otimes(\mbox{nr}(\delta)\circ\det)\rightarrow\Pi_{\underline{k},a}\rightarrow\underline{\rho}_{\underline{k}}\otimes(\mbox{nr}(\delta)\circ\det)\rightarrow0
\]
where $\mbox{St}_{G}=C^{0}(\mathbf{P}^{1}(F),C)/\mbox{\{constants\}}$
denotes the Steinberg representation of $G$ and where $\delta=(q+1)/a$.
\end{prop}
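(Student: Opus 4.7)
The plan is to reduce the statement to the classical smooth Hecke classification of Barthel--Livn\'e (\cite{barthel1994irreducible}) by a tensor-product decomposition of the locally algebraic compact induction. The crucial observation is that since $\underline{\rho}_{\underline{k}}$ extends from $KZ$ to a representation of all of $G$, the $C$-linear map
\[
\Phi\colon ind_{KZ}^G \underline{\rho}_{\underline{k}} \longrightarrow \underline{\rho}_{\underline{k}} \otimes_C ind_{KZ}^G \mathbf{1}, \qquad [g,v]\longmapsto (gv)\otimes [g,1]
\]
is a well-defined $G$-equivariant isomorphism: the $KZ$-relation $[g\kappa,v]=[g,\underline{\rho}_{\underline{k}}(\kappa)v]$ is respected because $\underline{\rho}_{\underline{k}}(g\kappa)v = \underline{\rho}_{\underline{k}}(g)\underline{\rho}_{\underline{k}}(\kappa)v$ while $[g\kappa,1]=[g,1]$ by triviality of $\mathbf{1}$, and $G$-equivariance is immediate from the definitions.

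Under $\Phi$, the Hecke operator $T$ of \lemref{lem:existence of the Hecke operator} corresponds to $\mathrm{id}_{\underline{\rho}_{\underline{k}}}\otimes T_1$, where $T_1$ is the smooth spherical Hecke operator attached to $\mathbf{1}_{KZ\alpha^{-1}KZ}\in\mathcal{H}_{KZ}(\mathbf{1})$. I would verify this directly: using \lemref{lem:hecke algebra is convolution} together with the identity $\psi(x^{-1})=\underline{\rho}_{\underline{k}}(x^{-1})$ for $x^{-1}\in KZ\alpha^{-1}KZ$ (which follows from $\psi(\alpha^{-1})=U_{\underline{k}}=\underline{\rho}_{\underline{k}}(\alpha^{-1})$), one computes
\[
\Phi(T([g,v])) = \sum_{xKZ\subset KZ\alpha^{-1}KZ}(gv)\otimes [gx,1] = (gv)\otimes T_1([g,1]) = (\mathrm{id}\otimes T_1)\Phi([g,v]).
\]
This is also the abstract content of \lemref{lem:isomorphism of Hecke algebras in the abs irred case}, which tells us that $\iota_{\underline{\rho}_{\underline{k}}}$ carries $T_1$ to $T$.

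Passing to the quotient by $T-a$ now yields
\[
\Pi_{\underline{k},a} \simeq \underline{\rho}_{\underline{k}}\otimes_C \frac{ind_{KZ}^G\mathbf{1}}{(T_1-a)\,ind_{KZ}^G\mathbf{1}},
\]
at which point the problem is purely smooth. Applying the Barthel--Livn\'e classification, the right-hand factor is an irreducible unramified principal series $Ind_B^G(\mbox{nr}(\lambda_1^{-1})\otimes\mbox{nr}(\lambda_2^{-1}))$ with $\lambda_1+q\lambda_2=a$ whenever $a$ avoids the critical set, and otherwise fits into the standard Steinberg-by-character extension at the critical values. The central-character constraint inherited from $\underline{\rho}_{\underline{k}}$, together with the $(\sigma\circ\chi)^{k_\sigma/2}$-twist built into $\underline{\rho}_{\underline{k}}$, forces $\lambda_1\lambda_2=\varpi^{\underline{k}}$ and shifts the critical set from the classical $\pm(q+1)$ to $\pm(q+1)\varpi^{\underline{u}}$, yielding the character $\delta=(q+1)/a$ in case $(ii)$.

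The main obstacle is the careful bookkeeping of this $(\sigma\circ\chi)^{k_\sigma/2}$-twist, which simultaneously governs the central character and the Hecke eigenvalue across the tensor-product decomposition: once this matching is performed, the proposition reduces to the Barthel--Livn\'e input, which can be treated as a black box.
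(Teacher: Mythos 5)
Your overall strategy is the standard one (and essentially how de Ieso proves this; the paper itself only cites \cite{de2013existence} and gives no proof): the tensor identity $ind_{KZ}^{G}\underline{\rho}_{\underline{k}}\simeq\underline{\rho}_{\underline{k}}\otimes_{C}ind_{KZ}^{G}\mathbf{1}$ via $[g,v]\mapsto(gv)\otimes[g,1]$ is correct and $G$-equivariant, and reducing to Barthel--Livn\'e is the right move. The gap is in the Hecke bookkeeping, which is exactly where the content of the statement lies. You claim that $\Phi$ intertwines $T$ with $\mathrm{id}\otimes T_{1}$ on the nose, relying on the paper's display $U_{\underline{k}}=\underline{\rho}_{\underline{k}}(\alpha^{-1})$. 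That identity is false as stated (the paper's equation (\ref{eq:U is rho}) is off by a scalar): computing with formula (\ref{eq:formula for G algebraic action}), one gets $\underline{\rho}_{k}^{\sigma}(\alpha^{-1})e_{k,i}=\sigma(\varpi)^{\frac{k}{2}-i}e_{k,i}$, whereas $U_{k}^{\sigma}e_{k,i}=\sigma(\varpi)^{k-i}e_{k,i}$, so in fact $U_{\underline{k}}=\varpi^{\underline{u}}\,\underline{\rho}_{\underline{k}}(\alpha^{-1})$ with $\underline{u}=\underline{k}/2$. Consequently $\psi=\varpi^{\underline{u}}\,\underline{\rho}_{\underline{k}}\mid_{KZ\alpha^{-1}KZ}$ and your computation gives $\Phi\circ T=\varpi^{\underline{u}}\,(\mathrm{id}\otimes T_{1})\circ\Phi$, not $\mathrm{id}\otimes T_{1}$.

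This is not a harmless normalization slip for your argument: if $T$ really corresponded to $\mathrm{id}\otimes T_{1}$, then $\Pi_{\underline{k},a}\simeq\underline{\rho}_{\underline{k}}\otimes\bigl(ind_{KZ}^{G}\mathbf{1}/(T_{1}-a)\bigr)$, and since $\underline{\rho}_{\underline{k}}$ is absolutely irreducible (even on open subgroups) the reducibility locus would be the Barthel--Livn\'e critical set $a=\pm(q+1)$, with unramified parameters of trivial central character --- contradicting the critical set $\pm(q+1)\varpi^{\underline{u}}$ and the relation $\lambda_{1}\lambda_{2}=\varpi^{\underline{k}}$ you are trying to prove. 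Your final paragraph then asserts that the $(\sigma\circ\chi)^{k_{\sigma}/2}$-twist ``shifts the critical set,'' but under your own claimed compatibility nothing can shift it; the two assertions are mutually inconsistent, so the key step is not verified. The repair is to carry the scalar through: from $\Phi\circ T=\varpi^{\underline{u}}(\mathrm{id}\otimes T_{1})\circ\Phi$ one gets $\Pi_{\underline{k},a}\simeq\underline{\rho}_{\underline{k}}\otimes\bigl(ind_{KZ}^{G}\mathbf{1}/(T_{1}-a\varpi^{-\underline{u}})\bigr)$, so Barthel--Livn\'e is applied at eigenvalue $a\varpi^{-\underline{u}}$; its critical values $\pm(q+1)$ become $a=\pm(q+1)\varpi^{\underline{u}}$, and absorbing the unramified determinant twist $\prod_{\sigma}(\sigma\circ\chi)^{k_{\sigma}/2}=\mbox{nr}(\varpi^{-\underline{u}})\circ\det$ into the principal series converts the trivial-central-character parameters $\mu_{1}\mu_{2}=1$, $\mu_{1}+q\mu_{2}=a\varpi^{-\underline{u}}$ into $\lambda_{i}=\mu_{i}\varpi^{\underline{u}}$, i.e.\ $\lambda_{1}\lambda_{2}=\varpi^{\underline{k}}$ and $\lambda_{1}+q\lambda_{2}=a$, and yields $\delta=(q+1)/a$ in case $(ii)$. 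You should also record, rather than leave implicit, that tensoring with the absolutely irreducible $\underline{\rho}_{\underline{k}}$ preserves irreducibility and exactness of the Steinberg sequence, and check the sub-versus-quotient direction in the critical case against the Barthel--Livn\'e convention.
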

As in \cite{de2013existence}, we define
\[
\Theta_{\underline{k},a}=\mbox{Im}\left(ind_{KZ}^{G}\underline{\rho}_{\underline{k}}^{0}\rightarrow\Pi_{\underline{k},a}\right)
\]
which is the same as
\[
\Theta_{\underline{k},a}=\frac{ind_{KZ}^{G}\underline{\rho}_{\underline{k}}^{0}}{ind_{KZ}^{G}\underline{\rho}_{\underline{k}}^{0}\cap(T-a)(ind_{KZ}^{G}\underline{\rho}_{\underline{k}})}.
\]
This is a lattice in $\Pi_{\underline{k},a}$ and, since $ind_{KZ}^{G}\rho_{\underline{k}}^{0}$
is a finitely generated $\mathcal{O}_{C}[G]$-module, we see that
$\Theta_{\underline{k},a}$ is also a finitely generated $\mathcal{O}_{C}[G]$-module.

Now, the Breuil Schneider conjecture \ref{conj:The-following-two}
asserts that $\underline{\rho}_{\underline{k}}\otimes\pi$ admits
a $G$-invariant norm.

By \cite[Prop. 1.17]{emerton2005p}, this is equivalent to the existence
of a separated lattice, and even to any finitely generated lattice
being separated.

The following conjecture is then a restatement of the Breuil-Schneider
conjecture.
\begin{conjecture}
The $\mathcal{O}_{C}$-module $\Theta_{\underline{k},a}$ does not
contain any $C$-line (it is separated).
\end{conjecture}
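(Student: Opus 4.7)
The plan is to proceed by contradiction and to reduce the separability of $\Theta_{\underline{k},a}$ to a precise $p$-adic estimate on the coefficients of elements of $ind_{KZ}^{G}\underline{\rho}_{\underline{k}}$ in the Bruhat--Tits tree basis, under the hypotheses of Theorem \ref{thm:Main Result}. Suppose that there exists a nonzero $\bar f\in\Theta_{\underline{k},a}$ lying in $\bigcap_{n\ge 0}\varpi^{n}\Theta_{\underline{k},a}$. Lifting $\bar f$ to some $f\in ind_{KZ}^{G}\underline{\rho}_{\underline{k}}^{0}$, this means that for every $n$ one can find $h_{n}\in ind_{KZ}^{G}\underline{\rho}_{\underline{k}}$ and $f_{n}\in ind_{KZ}^{G}\underline{\rho}_{\underline{k}}^{0}$ satisfying $f=(T-a)h_{n}+\varpi^{n}f_{n}$. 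The target is to show that the $h_{n}$ can be chosen to converge $\varpi$-adically to some $h_{\infty}\in ind_{KZ}^{G}\underline{\rho}_{\underline{k}}$ with $(T-a)h_{\infty}=f$, which forces $\bar f=0$.

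Next I would expand $h_{n}$ in the basis of Proposition \ref{prop:basis for compact induction} attached to the decomposition \eqref{eq:system of representatives for G/KZ}, writing $h_{n}=\sum_{m,\mu,i}[g_{m,\mu}^{i},v_{\mu,i}^{m}]$ with $v_{\mu,i}^{m}=\sum_{\underline{0}\le\underline{j}\le\underline{k}}c_{\underline{j},\mu,i}^{m}\,e_{\underline{k},\underline{j}}$. Using the symmetry $g_{m,\mu}^{1}=\beta g_{m,\mu}^{0}w$, one reduces to analyzing the branch $B^{0}$ of the tree, where Corollary \ref{cor:definition of coefficients} turns the relation $(T-a)h_{n}=f-\varpi^{n}f_{n}$ into the explicit recurrence \eqref{eq:formula for the coefficients} among the coefficients living on three consecutive circles. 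Since $T$ is injective on $ind_{KZ}^{G}\underline{\rho}_{\underline{k}}$, each $h_{n}$ is uniquely determined once its values outside a finite ball are fixed, so the whole question reduces to bounding the $v_{F}$-valuations of the $c_{\underline{j},\mu,i}^{m}$ as $m$ grows.

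The heart of the argument is an inductive estimate on the circles of $\mathcal{T}$. The hypothesis $|S^{+}|=1$ collapses the tensor product over $S$ to a single factor, so that \eqref{eq:formula for the coefficients} becomes a finite system of linear relations on the $(k+1)$-tuples $(c_{j,\mu}^{m})_{j=0}^{k}$. Inverting the ``upper triangular'' block coming from $T^{-}$ expresses $c_{j,[\mu]_{m-1}}^{m-1}$ in terms of $\sum_{\lambda\in\kappa_{F}}c_{j,\mu+\varpi^{m}[\lambda]}^{m+1}$, the diagonal term $a\,c_{j,\mu}^{m}$, and correction terms carrying the binomials ${i \choose j}(-\lambda_{\mu})^{i-j}$ weighted by $\varpi^{k-i}$ (from $T^{+}$) versus $\varpi^{j}$ (from $T^{-}$). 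Writing $k=dq+r$, the $p$-adic valuations of these binomials combined with those powers of $\varpi$ control how much valuation a single inward or outward step can consume, and the three alternative hypotheses of Theorem \ref{thm:Main Result} are precisely what makes this loss absorbable, in each regime of $v_{F}(a)$, by the shift induced by $T-a$.

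The main obstacle is to make this valuation control uniform in both $n$ and $m$: one needs to show that, for an appropriate choice of $h_{n}$, the coefficients $c_{\underline{j},\mu,i}^{m}$ lie in $\mathcal{O}_{C}$ with valuations growing sufficiently fast as $m\to\infty$, a statement \cite{de2013existence} proved under the much more restrictive bound $k_{\sigma}+1\le p^{v_{\sigma}}$. In case $(i)$, $v_{F}(a)\in[0,1]$ makes the diagonal contribution $a\,c_{j,\mu}^{m}$ subdominant, and $r<q-d$ keeps the reduced linear system invertible over $\kappa_{C}$; in case $(ii)$, the additional $2v_{F}(a)-1\le r$ compensates the extra valuation lost when $v_{F}(a)>1$; in case $(iii)$, the bound $k\le p\cdot q-1$ together with $d-1\le r$ and $v_{F}(a)\ge d$ allow a Lucas-style analysis of ${k \choose j}\bmod p$ to handle the tail. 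In every case, a circle-by-circle descent, started from the outermost vertices of the support of $h_{n}$, yields the required uniform integrality; passing to the $\varpi$-adic limit then produces $h_{\infty}$ with $(T-a)h_{\infty}=f$, contradicting $\bar f\ne 0$ and establishing separability.
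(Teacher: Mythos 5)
Your plan correctly identifies the general shape of the argument (translate infinite divisibility of a class in $\Theta_{\underline{k},a}$ into the relation $f=(T-a)h_{n}+\varpi^{n}f_{n}$, use $\beta$-symmetry to work in $B^{0}$, and control the coefficients $c_{j,\mu}^{m}$ circle by circle via the recurrence \eqref{eq:formula for the coefficients}), but there are two genuine gaps. First, the reduction step as you state it does not work: you want the $h_{n}$ to converge $\varpi$-adically to some $h_{\infty}\in ind_{KZ}^{G}\underline{\rho}_{\underline{k}}$, but the compact induction is not $\varpi$-adically complete and the supports of the $h_{n}$ are a priori unbounded, so a coefficientwise limit lives only in a completion and need not be compactly supported. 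What makes the argument close is a \emph{uniform} statement: the paper isolates the condition that the pair $(\underline{k},a)$ is separated, namely that $(T-a)(f)\in B_{N}+\varpi^{n}ind_{KZ}^{G}\underline{\rho}_{\underline{k}}^{0}$ forces $f\in B_{N-1}+\varpi^{n-\epsilon}ind_{KZ}^{G}\underline{\rho}_{\underline{k}}^{0}$ with $\epsilon$ and the ball $B_{N-1}$ independent of $n$; this lets one truncate $h_{n}$ to the finite-rank $\mathcal{O}_{C}$-module of functions supported in a fixed ball, where the limit argument is legitimate (this is exactly the mechanism of \cite[Cor.~4.1.2]{breuil2003quelques2} combined with \cite[Prop.~1.17]{emerton2005p}). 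Your final sentence "passing to the $\varpi$-adic limit then produces $h_{\infty}$" silently assumes this uniformity; without it the contradiction is not obtained.

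Second, and more seriously, the heart of the matter is only asserted, not argued. The claim that "the three alternative hypotheses of Theorem \ref{thm:Main Result} are precisely what makes this loss absorbable" is the entire difficulty. In the paper this requires: the lemmas showing that if $\sum_{i\ge j}\binom{i}{j}c_{i}[\lambda]^{i-j}\in\varpi^{n}\mathcal{O}_{C}$ for all $0\le j\le d$ and all $\lambda\in\mathbb{F}_{q}$ then $f(x)=\sum c_{i}x^{i}$ is congruent to a multiple of $(x^{q}-x)^{d+1}$ modulo $\varpi^{n}$, hence (since $k<q(d+1)$) all $c_{i}\in\varpi^{n}\mathcal{O}_{C}$, together with the explicit reduction of $x^{t}$ modulo $(x^{q}-x)^{d+1}$ giving the relations \eqref{eq:coefficients relation modulo (x^q-x)^(d+1)}; Kummer's theorem giving $p\mid\binom{k-i}{k-j-l(q-1)}$ under $r<q-d$; the unimodularity of the binomial matrices of Lemma \ref{lem:nonsingular matrix} and its corollaries; and above all the bootstrapping induction with the four interlocking statements $\mathscr{A}_{m},\mathscr{B}_{m},\mathscr{C}_{m},\mathscr{D}_{m}$ (plus a separate argument when $r=0$, and the simpler $\epsilon=d$ induction when $v_{F}(a)\ge d$). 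The asymmetry you mention between the $\varpi^{k-i}$ weights from $T^{-}$ and the $\varpi^{j}$ weights from $T^{+}$ is precisely why a single pass through the recurrence only yields the weak bound $c_{i,\mu}^{m-1}\in\varpi^{n-2d}\mathcal{O}_{C}$; one must then return to the coefficients $C_{j+l(q-1),\mu}^{m}$, exploit the divisibility of the relevant binomials and invert the unimodular systems to upgrade the bound before descending further. Nothing in your sketch indicates how this second pass is performed, so as written the proposal is a program rather than a proof of the statement (which, as in the paper, is only being established under the hypotheses of Theorem \ref{thm:Main Result}).
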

We also recall that Breuil, in \cite{breuil2003quelques2} proves
the conjecture for $F=\mathbb{Q}_{p}$ and $k<2p-1$, and that de
Ieso, in \cite{de2013existence}, proves it when $|J_{l}|\le1$ for
all $l\in\{0,\ldots,f-1\}$ and for any $\sigma\in S^{+}$, $k_{\sigma}+1\le p^{v_{\sigma}}$.

The idea, as in \cite{breuil2003quelques2}, is to reduce the problem
to a statement which we can prove inductively, sphere by sphere.

As we shall use that idea repeatedly, we introduce a related definition.
Abusing notation, we denote by $B_{N}\subseteq ind_{KZ}^{G}\underline{\rho}_{\underline{k}}$
the set of functions supported in $B_{N}=B_{N}^{0}\coprod B_{N-1}^{1}$,
where $B_{N}^{0}=\coprod_{M\le N}S_{M}^{0}$, $B_{N}^{1}=\coprod_{M\le N}S_{M}^{1}$,
and we have defined
\[
S_{M}^{0}=I\alpha^{-M}KZ,\quad S_{M}^{1}=I\beta\alpha^{-M}KZ
\]
We also recall that $B^{0},B^{1}$ denote the sets of functions supported on 

$\bigcup_{N}B_{N}^{0},\bigcup_{N}B_{N}^{1}$, respectively.
\begin{defn}
Let $\underline{k}\in\mathbb{N}^{S}$, and let $a\in\mathcal{O}_{C}$.
We say that the pair $(\underline{k},a)$ is \emph{separated }if for
all $N\in\mathbb{Z}_{>0}$ large enough, there exists a constant $\epsilon\in\mathbb{Z}_{\ge0}$
depending only on $N,\underline{k},a$ such that for all $n\in\mathbb{Z}_{\ge0}$,
and all $f\in B^{0}$
\begin{equation}
(T-a)(f)\in B_{N}+\varpi^{n}ind_{KZ}^{G}\underline{\rho}_{\underline{k}}^{0}\Rightarrow f\in B_{N-1}+\varpi^{n-\epsilon}ind_{KZ}^{G}\underline{\rho}_{\underline{k}}^{0}\label{eq:Main Theorem implication}
\end{equation}
\end{defn}
\begin{rem}
We slightly abuse notation here, as $\varpi\notin C$, but as $v_{F}(\sigma(\varpi))=v_{F}(\varpi)=1$
for all $\sigma\in S$, one may choose any embedding $\sigma:F\hookrightarrow C$,
and consider $\sigma(\varpi)^{n}$ instead.
\end{rem}
The upshot is that we have the following result.
\begin{thm}
Let $\underline{k}\in\mathbb{N}^{S}$, let $a\in\mathcal{O}_{C}$.
If $(\underline{k},a)$ is separated, then $\Theta_{\underline{k},a}$
is separated.
\end{thm}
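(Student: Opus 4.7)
\emph{Plan.} I would prove the contrapositive. Suppose $(\underline{k},a)$ is separated but $\Theta_{\underline{k},a}$ is not; then there is $f\in ind_{KZ}^{G}\underline{\rho}_{\underline{k}}^{0}$ whose image $\bar f$ in $\Theta_{\underline{k},a}$ is nonzero yet lies in $\varpi^{n}\Theta_{\underline{k},a}$ for every $n\ge 0$. Writing $L:=ind_{KZ}^{G}\underline{\rho}_{\underline{k}}^{0}$ and $V:=ind_{KZ}^{G}\underline{\rho}_{\underline{k}}$, this means that for each $n$ there are $f_{n}\in L$ and $g_{n}\in V$ with $f=\varpi^{n}f_{n}+(T-a)g_{n}$. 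The aim is to produce $g\in V$ with $(T-a)g=f$: this forces $\bar f=0$ and yields the desired contradiction. The operator $T-a$ is injective on $V$, since $T$ is injective by the Remark following \lemref{lem:existence of the Hecke operator} and a support-radius argument (the outermost sphere of $Tg$ strictly exceeds that of $ag$) propagates this to $T-a$.

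The crucial ingredient is a uniform support bound on the $g_{n}$. Choose $M$ large enough that $f\in B_{M}$ and that the separated hypothesis is available with $N=M$, giving a loss constant $\epsilon=\epsilon(M)$. Decompose $g_{n}=g_{n}^{0}+g_{n}^{1}$ with $g_{n}^{i}\in B^{i}$. By the explicit formulas of \lemref{lem: T action on left side} and \lemref{lem:T action on right side}, the two halves of the tree interact under $T$ only across the central edge $(s_{0},\alpha s_{0})$. Restricting the identity $(T-a)g_{n}=f-\varpi^{n}f_{n}$ to a sphere $S_{n'}^{0}$ with $n'\ge 1$, the $g_{n}^{1}$ contribution vanishes; for $n'>M$ this yields $(T-a)g_{n}^{0}|_{S_{n'}^{0}}=-\varpi^{n}f_{n}|_{S_{n'}^{0}}\in\varpi^{n}L$, while on $B_{M}$ the restriction of $(T-a)g_{n}^{0}$ is automatically supported in $B_{M}$ (including its value $\psi(\alpha^{-1})(g_{n}^{0}|_{s_{0}})$ at $\alpha s_{0}\in B_{0}$). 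Hence $(T-a)g_{n}^{0}\in B_{M}+\varpi^{n}L$, so the separated hypothesis yields
\[ g_{n}^{0}\in B_{M-1}+\varpi^{n-\epsilon}L. \]
Conjugating the argument by the element $\beta$, which interchanges $B^{0}$ and $B^{1}$ and commutes with the structural data up to the obvious symmetry, produces the analogous estimate for $g_{n}^{1}$. Summing gives a decomposition $g_{n}=g_{n}'+\varpi^{n-\epsilon}r_{n}$ with $g_{n}'\in B_{M-1}$ and $r_{n}\in L$, with $M-1$ and $\epsilon$ independent of $n$.

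Substituting back, $(T-a)g_{n}'=f-\varpi^{n}f_{n}-\varpi^{n-\epsilon}(T-a)r_{n}$, so $(T-a)g_{n}'$ converges $\varpi$-adically to $f$ inside the finite-dimensional $C$-vector space $B_{M}$. The restriction of $T-a$ to the finite-dimensional $C$-vector space $B_{M-1}$ is a $C$-linear injection into $B_{M}$, so its image is a closed $C$-subspace of $B_{M}$. Consequently $f$ lies in this image, that is, $f=(T-a)g$ for some $g\in B_{M-1}\subseteq V$, contradicting $\bar f\neq 0$ and completing the proof.

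The main obstacle is the uniform support bound of the second paragraph. The separated hypothesis is phrased only for $f\in B^{0}$, and the naive splitting $g_{n}=g_{n}^{0}+g_{n}^{1}$ does not cleanly respect $(T-a)$ because of the crosstalk at $(s_{0},\alpha s_{0})$. Verifying that the crosstalk values $\psi(\alpha^{-1})(g_{n}^{0}|_{s_{0}})$ and its $B^{1}$-counterpart contribute only inside $B_{M}$, so that the hypothesis of the separated implication is satisfied verbatim, and that symmetrising by $\beta$ transports the hypothesis to $B^{1}$ with the same loss constant $\epsilon$, requires careful bookkeeping with the formulas for $T^{+}$ and $T^{-}$ at the two central vertices. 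Once this is in place, the finite-dimensional linear-algebra argument in the third paragraph is essentially formal.
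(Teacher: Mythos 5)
Your argument is correct, and it differs from the paper's own proof mainly in how much it does by hand. The paper proves this theorem in two moves: it first upgrades the separated hypothesis from $f\in B^{0}$ to arbitrary $f$, using exactly your observation that $(T-a)(f^{0})$ and $(T-a)(f^{1})$ can only share support inside $S_{0}\subseteq B_{N}$ together with $\beta$-equivariance, and it then cites the proof of Corollary 4.1.2 of Breuil (\emph{breuil2003quelques2}) for the deduction ``uniform sphere-by-sphere control $\Rightarrow$ $\Theta_{\underline{k},a}$ separated''. You reprove that cited step from scratch: starting from a nonzero $\bar f\in\bigcap_{n}\varpi^{n}\Theta_{\underline{k},a}$, writing $f=\varpi^{n}f_{n}+(T-a)g_{n}$, using the separated hypothesis (applied to $g_{n}^{0}$ and, after translating by $\beta$, to $g_{n}^{1}$) to confine $g_{n}$ to a ball of radius independent of $n$ modulo $\varpi^{n-\epsilon}ind_{KZ}^{G}\underline{\rho}_{\underline{k}}^{0}$, and then concluding $f\in(T-a)(ind_{KZ}^{G}\underline{\rho}_{\underline{k}})$ because the image of the finite-dimensional space of functions supported in a fixed ball is a finite-dimensional, hence closed, subspace. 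This buys a self-contained proof at the cost of the bookkeeping you flag; that bookkeeping is genuinely fine and is the same as in the paper. Two small remarks: injectivity of $T-a$ is not needed for the closedness step (any linear image of a finite-dimensional space is closed), although your injectivity claim is true by the outermost-sphere argument; and with the paper's Section 4 convention $B_{N}=B_{N}^{0}\coprod B_{N-1}^{1}$ the translate $\beta B_{N}$ is the ball of radius $N$ around $\alpha s_{0}$ rather than literally $B_{N}$ (the paper itself elides this), which only shifts your radius by one and is harmless since all you need is a bound uniform in $n$.
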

\begin{proof}
First, note that if (\ref{eq:Main Theorem implication}) holds for
all $f\in ind_{KZ}^{G}\underline{\rho}_{\underline{k}}$, then the
proof of \cite{breuil2003quelques2}, Corollary 4.1.2 shows that
$\Theta_{\underline{k},a}$ is separated.

Next, for an arbitary $f\in ind_{KZ}^{G}\underline{\rho}_{\underline{k}}$,
write $f=f^{0}+f^{1}$ with $f^{0}\in B^{0}$ and $f^{1}\in B^{1}$.
Then by the formulas in Lemma \ref{lem: T action on left side} and
Lemma \ref{lem:T action on right side}, it follows that
\[
\mbox{supp}\left((T-a)(f^{0})\right)\cap\mbox{supp}\left((T-a)(f^{1})\right)\subseteq S_{0}=B_{0}\subseteq B_{N}
\]
If we assume that
\[
(T-a)(f^{0})+(T-a)(f^{1})=(T-a)(f)\in B_{N}+\varpi^{n}ind_{KZ}^{G}\underline{\rho}_{\underline{k}}^{0}
\]
it follows that both $(T-a)(f^{0})\in B_{N}+\varpi^{n}ind_{KZ}^{G}\underline{\rho}_{\underline{k}}^{0}$
and $(T-a)(f^{1})\in B_{N}+\varpi^{n}ind_{KZ}^{G}\underline{\rho}_{\underline{k}}^{0}$.

Since $f^{0}\in B^{0}$ and $(\underline{k},a)$ is separated, it
follows that $f^{0}\in B_{N-1}+\varpi^{n-\epsilon}ind_{KZ}^{G}\underline{\rho}_{\underline{k}}^{0}$.

Moreover, since $T$ is $G$-equivariant, and $\varpi^{\mathbb{Z}}\cdot Id$
acts trivially, we see that
\[
\beta(T-a)(\beta f^{1})=(T-a)(f^{1})\in B_{N}+\varpi^{n}ind_{KZ}^{G}\underline{\rho}_{\underline{k}}^{0}
\]
Since $\beta$ acts by translation, it does not affect the values
of the function, and since $\beta B_{N}=B_{N}$, it follows that
\[
(T-a)(\beta f^{1})\in B_{N}+\varpi^{n}ind_{KZ}^{G}\underline{\rho}_{\underline{k}}^{0}
\]
with $\beta f^{1}\in B^{0}$. Since $(\underline{k},a)$ is separated,
we get $\beta f^{1}\in B_{N-1}+\varpi^{n-\epsilon}ind_{KZ}^{G}\underline{\rho}_{\underline{k}}^{0}$,
hence $f^{1}\in B_{N-1}+\varpi^{n-\epsilon}ind_{KZ}^{G}\underline{\rho}_{\underline{k}}^{0}$.

In conclusion
\[
f=f^{0}+f^{1}\in B_{N-1}+\varpi^{n-\epsilon}ind_{KZ}^{G}\underline{\rho}_{\underline{k}}^{0}
\]
as claimed.
\end{proof}
It therefore remains to show that certain pairs $(\underline{k},a)$
are separated.

In this section, we will prove the following theorem:
\begin{thm}
\label{thm:Main Theorem}Assume that $\left|S^{+}\right|=1$, denote
by $\sigma$ the unique element in $S^{+}$, and and let $k=k_{\sigma}=d\cdot q+r$,
with $0\le r<q$. Assume that one of the following three conditions
is satisfied:

$(i)$ $k\le\frac{1}{2}q^{2}$ with $r<q-d$ and $v_{F}(a)\in[0,1]$.

$(ii)$ $k\le\frac{1}{2}q^{2}$ with $2v_{F}(a)-1\le r<q-d$ and $v_{F}(a)\in[1,e]$.

$(iii)$ $k\le\min\left(p\cdot q-1,\frac{1}{2}q^{2}\right)$ , $d-1\le r$
and $v_{F}(a)\ge d$.

Then $(\underline{k},a)$ is separated.
\end{thm}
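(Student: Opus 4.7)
The plan is to establish the implication (\ref{eq:Main Theorem implication}) by analyzing the explicit coefficient recursion of Corollary~\ref{cor:definition of coefficients} sphere by sphere, following the strategy of \cite{breuil2003quelques2,de2013existence} tailored to the larger-weight regime. Since $|S^{+}|=1$, the tensor indices collapse and any $f\in B^{0}$ supported on $B_{M}^{0}$ has a unique expansion
\[
f=\sum_{m=0}^{M}\sum_{\mu\in I_{m}}\Bigl[g_{m,\mu}^{0},\,\sum_{i=0}^{k}c_{i,\mu}^{m}\,e_{k,i}\Bigr].
\]
The hypothesis $(T-a)(f)\in B_{N}+\varpi^{n}\,ind_{KZ}^{G}\underline{\rho}_{\underline{k}}^{0}$ translates, via (\ref{eq:formula for the coefficients}), to the condition
\[
C_{j,\mu}^{m}\in\varpi^{n}\mathcal{O}_{C}\qquad\forall\,m>N,\;\mu\in I_{m},\;0\leq j\leq k,
\]
and the task becomes to conclude that every $c_{i,\mu}^{m}$ for $m\geq N$ is $\varpi^{n-\epsilon}$-small with $\epsilon=\epsilon(N,k,a)$ independent of both $n$ and $M$.

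I would then set up a downward induction on $m$, starting from $m=M$ and ending at $m=N$. At the outermost layer $m=M+1$, the forward and diagonal terms in (\ref{eq:formula for the coefficients}) vanish, leaving for each $\mu'\in I_{M}$ and each $\lambda\in\kappa_{F}$ the constraint
\[
\varpi^{j}\sum_{i=j}^{k}\binom{i}{j}c_{i,\mu'}^{M}\,(-[\lambda])^{i-j}\in\varpi^{n}\mathcal{O}_{C}.
\]
The first substantive step is a Vandermonde-type inversion lemma: under the numerical hypotheses of the theorem, this linear system is invertible modulo a controlled power $\varpi^{\epsilon_{0}}$ depending only on $k$. The proof rests on estimating the $p$-adic valuation of the Vandermonde determinant built from the Teichm\"uller lifts $[\lambda]$, together with the diagonal factors $\varpi^{j}$; the hypothesis $k\leq\tfrac{1}{2}q^{2}$ is what ensures that this determinant is nonzero modulo $\varpi^{\epsilon_{0}}$.

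For the inductive step from sphere $m+1$ down to sphere $m-1$, the coefficient equation (\ref{eq:formula for the coefficients}) rearranges to
\[
\varpi^{j}\sum_{i=j}^{k}\binom{i}{j}c_{i,[\mu]_{m-1}}^{m-1}(-\lambda_{\mu})^{i-j}\equiv a\,c_{j,\mu}^{m}-\sum_{i=j}^{k}\varpi^{k-i}\binom{i}{j}\sum_{\lambda\in\kappa_{F}}c_{i,\mu+\varpi^{m}[\lambda]}^{m+1}[\lambda]^{i-j}\pmod{\varpi^{n}\mathcal{O}_{C}}.
\]
The left-hand side is exactly the Vandermonde form already considered, now in the unknowns $c^{m-1}_{i,[\mu]_{m-1}}$, while the right-hand side involves only quantities already controlled by the inductive hypothesis. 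Applying the inversion lemma yields explicit bounds on the $c^{m-1}_{i,\mu'}$. The crucial point is that the per-step loss must be \emph{uniformly} bounded, since otherwise iterating $M-N$ times would make $\epsilon$ blow up with $M$.

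The main obstacle, and what separates the three cases of the theorem, is precisely this uniformity. The per-step loss combines a contribution of order $\varpi^{v_{F}(a)}$ from the diagonal term $a\,c^{m}_{j,\mu}$ and contributions of order $\varpi^{k-i}$ from the forward sum, weighted against the binomial valuations $v_{F}\binom{i}{j}$ given by Kummer's theorem. In case (i), $v_{F}(a)$ is small and one must lean on the arithmetic condition $r<q-d$ to keep the forward loss bounded; in case (ii), the inequality $2v_{F}(a)-1\leq r$ is calibrated to balance the $a$-term against the Vandermonde loss; in case (iii), the strong integrality of $a$ is compensated by sharper binomial divisibilities available when $k\leq pq-1$. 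Working out the delicate valuation bookkeeping in each of these three regimes, so that a single universal $\epsilon$ suffices for all three, is the computational heart of the argument; the sphere-by-sphere induction itself is then formal.
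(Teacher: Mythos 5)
There is a genuine gap, and it sits exactly where you defer it. First, your ``Vandermonde-type inversion lemma'' is not the right tool and its stated justification is wrong: the constraints coming from (\ref{eq:formula for the coefficients}) involve only the derivative-type sums $\sum_{i\ge j}\binom{i}{j}c_{i}[\lambda]^{i-j}$ for $0\le j\le d$ at the $q$ Teichm\"uller points, i.e.\ a confluent (Hermite) interpolation problem, not a square Vandermonde system; the correct statement is that such congruences force $\sum_i c_i x^i$ into $(x^q-x)^{d+1}C[x]+\varpi^n\mathcal{O}_C[x]$, and then the degree bound $k<q(d+1)$ (automatic from $d=\lfloor k/q\rfloor$, not from $k\le\tfrac12 q^2$) gives $c_i\in\varpi^n\mathcal{O}_C$ \emph{exactly}, with no loss $\epsilon_0$ (this is Lemma~\ref{subsec:Lemma divisibility of polynomial coefficients}). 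The hypothesis $k\le\tfrac12 q^2$ is used for a different purpose (it forces $2d\le q$, so that indices of the form $j+l(q-1)$ with $l\ge1$ are $\ge 2d$, etc.), so your determinant-valuation estimate is both unnecessary where you invoke it and insufficient as a substitute for what is actually needed.

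Second, and more seriously, your own remark that the per-step loss ``must be uniformly bounded'' identifies the entire difficulty and then omits it: since $M$ is unbounded, any positive loss per sphere is fatal, so the descending induction must carry hypotheses at a \emph{fixed} level (e.g.\ $c^{m}_{i,\mu}\in\frac{\varpi^{n-d}}{a}\mathcal{O}_C$ together with sharper bounds on $c^m_{j,\mu}$ for $j\le d$ and on $c^m_{k-j,\mu}$) and recover, at each step, the full strength of these hypotheses at sphere $m-1$. The naive use of (\ref{eq:formula for the coefficients}) only yields the weaker bound $\varpi^{n-2d}$, and upgrading it back requires the bootstrapping machinery that your proposal never supplies: the auxiliary linear relations among the coefficients $c_{i+s(q-1),\mu}$ (the statements $\mathscr{C}_m$), the Kummer/Lucas divisibilities $p\mid\binom{k-i}{k-j-l(q-1)}$ which is where the arithmetic conditions $r<q-d$, $2v_F(a)-1\le r$, resp.\ $d-1\le r$ actually enter (Lemma~\ref{subsec:Lemma divisibility of binomial coefficient}), and the unimodularity of the binomial-coefficient matrices (Lemma~\ref{lem:nonsingular matrix}, Corollaries~\ref{cor:nonsingular modified matrix} and~\ref{cor:nonsingular matrix large valuation}) needed to isolate $c^{m-1}_{k-j,\mu}$ from the congruence classes mod $q-1$. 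Moreover the boundary case $r=0$, $v_F(a)\le1$ of part $(i)$ falls outside this scheme and needs a separate argument (as in the paper's last theorem). Without these ingredients the ``valuation bookkeeping'' you appeal to cannot close the induction, so the proposal as it stands does not prove the theorem.
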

\begin{cor}
Under the above conditions, $\Theta_{\underline{k},a}$ is separated,
hence $\Pi_{\underline{k},a}$ admits an invariant norm.
\end{cor}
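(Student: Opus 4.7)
The plan is to invoke the preceding theorem, which reduces separatedness of $\Theta_{\underline{k},a}$ to the statement that $(\underline{k},a)$ is a separated pair. Thus it suffices, for any $f\in B^{0}$, to prove the implication
\[
(T-a)(f)\in B_N+\varpi^n\mathrm{ind}_{KZ}^G\underline{\rho}_{\underline{k}}^0 \;\Longrightarrow\; f\in B_{N-1}+\varpi^{n-\epsilon}\mathrm{ind}_{KZ}^G\underline{\rho}_{\underline{k}}^0
\]
for some $\epsilon=\epsilon(N,\underline{k},a)$ and all sufficiently large $N$. Writing $f=\sum_{m=0}^{M}f_m$ with $f_m=\sum_{\mu\in I_m}[g_{m,\mu}^0,v_\mu^m]$ supported on $S_m^0$, and expanding $v_\mu^m=\sum_{0\le j\le k}c_{j,\mu}^m\,e_{k,j}$ in the monomial basis (which is legitimate since $|S^{+}|=1$ collapses the multi-index notation to scalars), Corollary \ref{cor:definition of coefficients} gives an explicit three-term recurrence relating the coefficient $C_{j,\mu}^m$ of $[g_{m,\mu}^0,e_{k,j}]$ in $(T-a)(f)$ to the $c^{m\pm 1}$ and $c^m$. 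The hypothesis forces $C_{j,\mu}^m\equiv 0\pmod{\varpi^n}$ for every $m\ge N+1$.

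The core argument is then a descending induction on $m$, running from the largest index $M$ (where $f_{M+1}=0$) down to $N$. Having already controlled $c_{i,\mu'}^{m'}$ for $m'>m$, one tries to extract $c_{j,\mu}^m$ from the vanishing of $C_{j,\mu+\varpi^m[\lambda]}^{m+1}$ as $\lambda$ ranges over $\kappa_F$. To invert the linear system one sums these equations weighted by $[\lambda]^s$ for appropriate exponents $s$, exploiting the identity
\[
\sum_{\lambda\in\kappa_F}[\lambda]^s\equiv\begin{cases} -1\pmod{\varpi} & \text{if } (q-1)\mid s,\ s>0\\ 0\pmod{\varpi} & \text{otherwise}\end{cases}
\]
to isolate individual coefficients $c_{j,\mu}^m$ up to a controlled valuation loss coming from the factors $\varpi^{k-i}\binom{i}{j}$ in the $T^-$-contribution and from the scalar $a$ multiplying $c_{j,\mu}^m$. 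The three cases in the theorem correspond to three regimes in which this loss can be bounded uniformly: writing $k=dq+r$, condition $(i)$ matches the low-valuation regime $v_F(a)\le 1$; condition $(ii)$ covers intermediate $v_F(a)\in[1,e]$, where the requirement $2v_F(a)-1\le r$ is precisely what guarantees that the $a$-factor does not swamp the $\varpi^{k-i}$-gain; and condition $(iii)$, valid when $v_F(a)\ge d$ and the weight is bounded by $pq-1$, exploits an opposite balance where the valuation of $a$ compensates a possibly worse combinatorial term. Once $c_{j,\mu}^m\in\varpi^{n-\epsilon}\mathcal{O}_C$ is established for every $m\ge N$, $\mu\in I_m$, $0\le j\le k$, one concludes $\sum_{m\ge N}f_m\in\varpi^{n-\epsilon}\mathrm{ind}_{KZ}^G\underline{\rho}_{\underline{k}}^0$, hence $f\in B_{N-1}+\varpi^{n-\epsilon}\mathrm{ind}_{KZ}^G\underline{\rho}_{\underline{k}}^0$.

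The main obstacle is the combinatorial control of the $p$-adic valuations of the binomial coefficients $\binom{i}{j}$ and of $\varpi^{k-i}$ when $k\ge q$. For $k$ in the small-weight range of Breuil and de Ieso, these valuations were trivially bounded and the induction ran cleanly; here, Kummer's theorem allows $v_p\binom{i}{j}$ to grow with $k$, so one must perform a delicate bookkeeping that distinguishes ranges of $i,j$ according to the base-$q$ expansion $k=dq+r$. This is where the restrictions $k\le\tfrac12 q^2$, the inequalities linking $r,d$ and $v_F(a)$, and (in case $(iii)$) the extra bound $k\le pq-1$ enter: each one is exactly what is needed to ensure that, case by case, the binomial loss in the $T^-$-formula never exceeds the gain from $\varpi^{k-i}$ plus the gain from $a$, so that a single uniform $\epsilon$ works. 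With this analysis in hand, the induction closes and the corollary on invariant norms follows directly.
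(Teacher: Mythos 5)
Your top-level reduction is the right one, and it is all the paper itself needs for this corollary: under ``the above conditions'' Theorem \ref{thm:Main Theorem} states precisely that the pair $(\underline{k},a)$ is separated, the theorem immediately preceding it states that a separated pair forces $\Theta_{\underline{k},a}$ to be separated, and the passage from separatedness of the finitely generated lattice $\Theta_{\underline{k},a}$ to a $G$-invariant norm on $\Pi_{\underline{k},a}$ is Emerton's criterion (\cite[Prop.~1.17]{emerton2005p}), already quoted earlier in the paper. That citation chain is the entire proof; note that your last clause, ``the corollary on invariant norms follows directly,'' silently uses this Emerton step together with the fact that $\Theta_{\underline{k},a}$ is a finitely generated $\mathcal{O}_{C}[G]$-lattice generating $\Pi_{\underline{k},a}$, and that should be said explicitly.

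The genuine gap is in what you do instead of citing Theorem \ref{thm:Main Theorem}: you take on the burden of proving that $(\underline{k},a)$ is separated and then give only a strategy outline. The outline does mirror the paper's method (descending induction sphere by sphere via the coefficient formula of Corollary \ref{cor:definition of coefficients}, divisibility of binomial coefficients, the power sums $\sum_{\lambda\in\kappa_F}[\lambda]^{s}$), but every decisive estimate is asserted rather than proved: the claims that the valuation loss from $\binom{i}{j}$, $\varpi^{k-i}$ and $a$ ``can be bounded uniformly,'' and that the hypotheses $r<q-d$, $2v_F(a)-1\le r$, $k\le pq-1$, $v_F(a)\ge d$ are ``exactly what is needed,'' are exactly the content the paper spends two technical subsections establishing. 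Concretely, nothing in your sketch replaces the $(x^{q}-x)^{d+1}$-divisibility results (Lemmas \ref{subsec:Lemma divisibility by x^q-x}--\ref{subsec:Lemma sum coeffs divisibility}), Lemma \ref{subsec:Lemma divisibility of binomial coefficient}, the nonsingular binomial matrices of Lemma \ref{lem:nonsingular matrix} and its corollaries, or the four inductive statements $\mathscr{A}_{m},\mathscr{B}_{m},\mathscr{C}_{m},\mathscr{D}_{m}$ with the bootstrapping Lemma \ref{lem:Main Lemma for v(a) < e case}; moreover the regime $v_F(a)\ge d$ is handled in the paper not by the power-sum identity you invoke but by invertibility mod $p$ of the matrices $\binom{k-i+1}{m+l(q-1)}$, and the power-sum argument only appears in the special case $r=0$, $v_F(a)\le 1$. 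So either cite Theorem \ref{thm:Main Theorem} (and then the corollary is immediate, modulo the Emerton step above), or supply these estimates; as written, the analytic heart of the claim is missing.
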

Since our assumptions include the fact that $\left|S^{+}\right|=1$,
we may proceed with the following notational simplifications.

We assume that $C$ contains $F$, and let $\sigma=\iota:F\hookrightarrow C$
be the natural inclusion.

We may further let $k=k_{\sigma}$ stand for the multi-index $\underline{k}$
corresponding to $k$, and similarly for all multi-indices.

\subsection{Preparation}

Before we prove the theorems, let us first prove the following useful
lemmata, which we will employ later on.
\begin{lem}
\label{subsec:Lemma divisibility by x^q-x} Let $\kappa$ be a finite
field of characteristic $p$ containing $\mathbb{F}_{q}$. Consider
a polynomial $h\in\kappa[x]$, such that
\[
h(x+\lambda)\in x^{j}\cdot\kappa[x]\quad\forall\lambda\in\mathbb{F}_{q}
\]

Then
\[
h(x)\in(x^{q}-x)^{j}\cdot\kappa[x]
\]
\end{lem}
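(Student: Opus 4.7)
The plan is straightforward and quite short. First I would translate the hypothesis via the substitution $x \mapsto x - \lambda$: the assumption $h(x+\lambda) \in x^j \cdot \kappa[x]$ is equivalent to $(x-\lambda)^j \mid h(x)$ in $\kappa[x]$, for every $\lambda \in \mathbb{F}_q$.

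Next I would invoke the classical identity
\[
x^q - x = \prod_{\lambda \in \mathbb{F}_q}(x-\lambda)
\]
in $\mathbb{F}_q[x] \subseteq \kappa[x]$, which holds because every element of $\mathbb{F}_q$ is a root of $x^q - x$ and this polynomial is monic of degree $q$. Since the distinct linear factors $(x-\lambda)$ for $\lambda \in \mathbb{F}_q$ are pairwise coprime in the UFD $\kappa[x]$, their $j$-th powers $(x-\lambda)^j$ are likewise pairwise coprime.

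Combining these two observations: if each $(x-\lambda)^j$ individually divides $h(x)$ in $\kappa[x]$, then by pairwise coprimality their product
\[
\prod_{\lambda \in \mathbb{F}_q}(x-\lambda)^j \;=\; \Bigl(\prod_{\lambda \in \mathbb{F}_q}(x-\lambda)\Bigr)^j \;=\; (x^q - x)^j
\]
also divides $h(x)$ in $\kappa[x]$, which is the desired conclusion. There is essentially no obstacle here; the only point to watch is that we are working over $\kappa$ rather than $\mathbb{F}_q$, but since $\mathbb{F}_q \subseteq \kappa$ the factorization of $x^q - x$ remains valid and the $q$ linear factors remain distinct (hence coprime) in $\kappa[x]$, so the UFD argument goes through unchanged.
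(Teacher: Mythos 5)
Your proof is correct, but it takes a genuinely different route from the paper's. You translate the hypothesis into the statement that $(x-\lambda)^j$ divides $h(x)$ for every $\lambda\in\mathbb{F}_q$, factor $x^q-x=\prod_{\lambda\in\mathbb{F}_q}(x-\lambda)$ into distinct linear factors, and conclude by pairwise coprimality of the $(x-\lambda)^j$ in the PID $\kappa[x]$ that $(x^q-x)^j\mid h(x)$; this is a one-step argument with no induction, and the substitution step ($h(x+\lambda)\in x^j\kappa[x]$ iff $(x-\lambda)^j\mid h(x)$) is handled correctly. The paper instead argues by induction on $j$: the base case $j=1$ gives $h(x)=(x^q-x)g(x)$ from the vanishing of $h$ on $\mathbb{F}_q$, and then, using that $h(x+\lambda)=(x^q-x)g(x+\lambda)$ (since $(x+\lambda)^q-(x+\lambda)=x^q-x$ for $\lambda\in\mathbb{F}_q$) together with $\gcd(x^q-x,x^j)=x$, it deduces $g(x+\lambda)\in x^{j-1}\kappa[x]$ and applies the induction hypothesis to $g$. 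The two arguments rest on the same fact, namely that $x^q-x$ has $q$ distinct roots, but yours is more direct and makes the multiplicity count transparent via unique factorization, whereas the paper's induction on $j$ is structured to parallel the inductive lifting arguments it performs later modulo powers of $\varpi_{C}$ (Lemmas \ref{lem:x^q-x modulo pi} and \ref{lem:x^q-x modulo pi^n}), where a clean factorization argument is no longer available.
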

\begin{proof}
We will prove the Lemma by induction on $j$. For $j=1$, $h(x+\lambda)\in x\cdot\kappa[x]$
implies that $h(\lambda)=0$ for all $\lambda\in\mathbb{F}_{q}$,
hence $x^{q}-x\mid h(x)$, as claimed.

In general, $h(x+\lambda)\in x^{j}\cdot\kappa[x]\subseteq x\cdot\kappa[x]$
for all $\lambda\in\mathbb{F}_{q}$, hence $h(x)=(x^{q}-x)\cdot g(x)$
for some $g(x)\in\kappa[x]$, by the $j=1$ case. But $\gcd(x^{q}-x,x^{j})=x$,
hence we get
\[
h(x+\lambda)=(x^{q}-x)\cdot g(x+\lambda)\in x^{j}\cdot\kappa[x]\Rightarrow g(x+\lambda)\in x^{j-1}\cdot\kappa[x]
\]

for all $\lambda\in\mathbb{F}_{q}$.

By the induction hypothesis, it follows that $g(x)\in(x^{q}-x)^{j-1}\cdot\kappa[x]$,
hence $h(x)\in(x^{q}-x)^{j}\cdot\kappa[x]$.
\end{proof}
\begin{lem}
\label{lem:x^q-x modulo pi}Let $k,d\in\mathbb{N}$. Let $h(x)=\sum_{i=0}^{k}c_{i}x^{i}\in\mathcal{O}_{C}[x]$
be such that for all $0\le j\le d$, and all $\lambda\in\mathbb{F}_{q}$,
we have
\[
\sum_{i=j}^{k}{i \choose j}c_{i}[\lambda]^{i-j}\in\varpi_{C}\mathcal{O}_{C}
\]
where $[\lambda]\in\mathcal{O}_{F}\hookrightarrow\mathcal{O}_{C}$
is the Teichmüller representative of $\lambda$. Then $h(x)\in(x^{q}-x)^{d+1}\cdot\mathcal{O}_{C}[x]+\varpi_{C}\cdot\mathcal{O}_{C}[x]$.
\end{lem}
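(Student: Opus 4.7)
The plan is to interpret the hypothesis as a statement about Taylor expansions of $h$ at the Teichmüller lifts $[\lambda]$, then reduce modulo $\varpi_C$ and invoke the previous Lemma.

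\smallskip

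First, I would recognize that for each $\lambda \in \mathbb{F}_q$, the expansion
\[
h(x+[\lambda]) = \sum_{j=0}^{k}\left(\sum_{i=j}^{k}\binom{i}{j}c_i[\lambda]^{i-j}\right)x^j
\]
makes the hypothesis precisely the statement that, for all $\lambda\in\mathbb{F}_q$, the first $d+1$ coefficients of $h(x+[\lambda])$ lie in $\varpi_C\mathcal{O}_C$. Equivalently,
\[
h(x+[\lambda])\in x^{d+1}\mathcal{O}_C[x]+\varpi_C\mathcal{O}_C[x] \qquad \forall \lambda\in\mathbb{F}_q.
\]

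\smallskip

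Next, I would reduce modulo $\varpi_C$. Writing $\bar h\in\kappa_C[x]$ for the reduction of $h$, and noting that $\kappa_C\supseteq\mathbb{F}_q$ (since $C$ is large enough) so that the image of $[\lambda]$ in $\kappa_C$ is $\lambda$ itself, this yields
\[
\bar h(x+\lambda)\in x^{d+1}\kappa_C[x] \qquad \forall \lambda\in\mathbb{F}_q.
\]
Now Lemma~\ref{subsec:Lemma divisibility by x^q-x}, applied with $\kappa=\kappa_C$ and $j=d+1$, gives directly
\[
\bar h(x)\in (x^q-x)^{d+1}\cdot\kappa_C[x].
\]

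\smallskip

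Finally, I would lift this divisibility back to $\mathcal{O}_C[x]$. Pick any lift $g\in\mathcal{O}_C[x]$ of the quotient $\bar h/(x^q-x)^{d+1}\in\kappa_C[x]$; since $(x^q-x)^{d+1}$ is monic in $\mathcal{O}_C[x]$, the polynomial $h(x)-g(x)(x^q-x)^{d+1}$ has coefficients in $\mathcal{O}_C$ and reduces to $0$ mod $\varpi_C$, hence lies in $\varpi_C\mathcal{O}_C[x]$. This gives the desired decomposition.

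\smallskip

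I do not anticipate any real obstacle here: the content of the lemma is essentially just the translation of the Taylor-coefficient hypothesis into the language of the previous Lemma. The only small point requiring attention is the monicity of $(x^q-x)^{d+1}$, which lets the lift to $\mathcal{O}_C[x]$ proceed without any issue.
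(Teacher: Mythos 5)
Your proposal is correct and follows essentially the same route as the paper: expand $h(x+[\lambda])$ to read the hypothesis as membership in $\left(x^{d+1},\varpi_{C}\right)$, reduce modulo $\varpi_{C}$, and apply Lemma~\ref{subsec:Lemma divisibility by x^q-x} over $\kappa_{C}$. Your explicit lifting step at the end (which the paper leaves implicit) is fine, and in fact any lift of the quotient works since $(x^{q}-x)^{d+1}$ has integral coefficients, so even the appeal to monicity is not strictly needed.
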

\begin{proof}
By our assumption, since
\[
h(x+[\lambda])=\sum_{i=0}^{k}c_{i}(x+[\lambda])^{i}=\sum_{i=0}^{k}c_{i}\sum_{j=0}^{i}{i \choose j}x^{j}[\lambda]^{i-j}=
\]
\begin{equation}
=\sum_{j=0}^{k}\left(\sum_{i=j}^{k}{i \choose j}c_{i}[\lambda]^{i-j}\right)x^{j}\label{eq:f(x+lambda)}
\end{equation}
we see that
\[
h(x+[\lambda])\in\left(x^{d+1},\varpi_{C}\right)\quad\forall\lambda\in\mathbb{F}_{q}
\]
Equivalently, considering the image in $k_{C}=\mathcal{O}_{C}/\varpi_{C}\mathcal{O}_{C}$,
we have $\overline{h}\in\kappa_{C}[x]$ of degree at most $k$, satisfying

$\overline{h}(x+\lambda)\in(x^{d+1})$ for all $\lambda\in\mathbb{F}_{q}$.

By Lemma \ref{subsec:Lemma divisibility by x^q-x}, we see that $\overline{h}(x)\in(x^{q}-x)^{d+1}\cdot\kappa_{C}[x]$,
hence 
$$h(x)\in\left((x^{q}-x)^{d+1},\varpi_{C}\right).$$
This establishes the Lemma.
\end{proof}
\begin{lem}
\label{lem:x^q-x modulo pi^n}Let $n,k,d\in\mathbb{N}$. Let $f(x)=\sum_{i=0}^{k}c_{i}x^{i}\in\mathcal{O}_{C}[x]$
be such that for all $0\le j\le d$ and all $\lambda\in\mathbb{F}_{q}$
we have
\[
\sum_{i=j}^{k}{i \choose j}c_{i}[\lambda]^{i-j}\in\varpi_{C}^{n}\mathcal{O}_{C}
\]
where $[\lambda]\in\mathcal{O}_{F}\hookrightarrow\mathcal{O}_{C}$
is the Teichmüller representative of $\lambda$. Then $f(x)\in(x^{q}-x)^{d+1}\cdot\mathcal{O}_{C}[x]+\varpi_{C}^{n}\cdot\mathcal{O}_{C}[x]$.
\end{lem}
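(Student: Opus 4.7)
The natural plan is to induct on $n$, with the base case $n = 1$ being exactly the already-proved Lemma \ref{lem:x^q-x modulo pi}. So assume the statement for $n-1$ and consider $f$ satisfying the hypothesis for $n$. Since the condition for level $n$ is stronger than the condition for level $n-1$, the induction hypothesis gives a decomposition
\[
f(x) = (x^q - x)^{d+1} G(x) + \varpi_C^{n-1} H(x)
\]
with $G, H \in \mathcal{O}_C[x]$. The goal reduces to showing that $H(x) \in (x^q - x)^{d+1}\mathcal{O}_C[x] + \varpi_C \mathcal{O}_C[x]$, for then multiplication by $\varpi_C^{n-1}$ yields the desired conclusion after absorbing $(x^q-x)^{d+1}\mathcal{O}_C[x]$ terms.

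To set up an application of Lemma \ref{lem:x^q-x modulo pi} to $H$, I would examine the low-degree part of $f(x + [\lambda])$. The key observation is that the polynomial $P_\lambda(x) := (x+[\lambda])^q - (x+[\lambda])$ vanishes at $x = 0$: indeed $P_\lambda(0) = [\lambda]^q - [\lambda] = [\lambda^q] - [\lambda] = 0$ because $\lambda \in \mathbb{F}_q$. Hence $x \mid P_\lambda(x)$ in $\mathcal{O}_C[x]$, and therefore $x^{d+1} \mid P_\lambda(x)^{d+1} G(x+[\lambda])$. Consequently
\[
[x^j]\bigl(P_\lambda(x)^{d+1} G(x+[\lambda])\bigr) = 0 \quad\text{for all } 0 \le j \le d.
\]
Combining this with the hypothesis $[x^j] f(x+[\lambda]) \in \varpi_C^n \mathcal{O}_C$ (which is exactly the content of (\ref{eq:f(x+lambda)}) applied to our hypothesis), we obtain
\[
[x^j]\bigl(\varpi_C^{n-1} H(x+[\lambda])\bigr) \in \varpi_C^n \mathcal{O}_C,
\]
hence $[x^j] H(x+[\lambda]) \in \varpi_C \mathcal{O}_C$ for all $0 \le j \le d$ and all $\lambda \in \mathbb{F}_q$.

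This is precisely the hypothesis of Lemma \ref{lem:x^q-x modulo pi} for $H$, which therefore yields $H(x) \in (x^q-x)^{d+1}\mathcal{O}_C[x] + \varpi_C \mathcal{O}_C[x]$. Substituting back gives
\[
f(x) = (x^q-x)^{d+1}\bigl(G(x) + \varpi_C^{n-1} G_2(x)\bigr) + \varpi_C^{n} H_2(x)
\]
for suitable $G_2, H_2 \in \mathcal{O}_C[x]$, completing the induction. There is no real obstacle here: the one point that requires care is the vanishing of $P_\lambda$ at $0$, which depends crucially on $\lambda$ lying in $\mathbb{F}_q$ (so that $[\lambda]^q = [\lambda]$), and this is what makes the cross term between the two summands of $f$ disappear modulo the desired degree.
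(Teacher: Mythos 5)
Your proposal is correct and follows essentially the same route as the paper's proof: induction on $n$, the decomposition $f=(x^q-x)^{d+1}G+\varpi_C^{n-1}H$ from the induction hypothesis, the observation that $(x+[\lambda])^q-(x+[\lambda])\in x\cdot\mathcal{O}_C[x]$ because $[\lambda]^q=[\lambda]$, and then the reduction of $H$ to the $n=1$ statement. The only cosmetic difference is that you cite Lemma \ref{lem:x^q-x modulo pi} for $H$ while the paper reduces modulo $\varpi_C$ and cites Lemma \ref{subsec:Lemma divisibility by x^q-x} directly; these are equivalent via the identity in (\ref{eq:f(x+lambda)}).
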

\begin{proof}
By induction on $n$. For $n=1$, this is Lemma \ref{lem:x^q-x modulo pi}.
Assume it holds for $n-1$, and let us prove it for $n$.

Since $\varpi_{C}^{n}\mathcal{O}_{C}\subseteq\varpi_{C}^{n-1}\mathcal{O}_{C}$,
the induction hypothesis implies that $f(x)\in\left((x^{q}-x)^{d+1},\varpi_{C}^{n-1}\right)$,
so we may write
\[
f(x)=(x^{q}-x)^{d+1}\cdot g(x)+\varpi_{C}^{n-1}\cdot h(x)
\]

By (\ref{eq:f(x+lambda)}), our assumption implies that
\[
f(x+[\lambda])\in\left(x^{d+1},\varpi_{C}^{n}\right)\quad\forall\lambda\in\mathbb{F}_{q}
\]
substituting in the above equation, we get
\[
\left((x+[\lambda])^{q}-(x+[\lambda])\right)^{d+1}\cdot g(x+[\lambda])+\varpi_{C}^{n-1}\cdot h(x+[\lambda])\in(x^{d+1},\varpi_{C}^{n})
\]
But
\[
(x+[\lambda])^{q}-(x+[\lambda])=\sum_{i=0}^{q}{q \choose i}[\lambda]^{q-i}x^{i}-x-[\lambda]=\sum_{i=1}^{q}{q \choose i}[\lambda]^{q-i}x^{i}-x\in x\cdot\mathcal{O}_{C}[x]
\]
since $[\lambda]^{q}=[\lambda]$ for all $\lambda\in\mathbb{F}_{q}$.
This shows that $\left((x+[\lambda])^{q}-(x+[\lambda])\right)^{d+1}\in(x^{d+1})\subseteq(x^{d+1},\varpi_{C}^{n})$,
hence
\[
\varpi_{C}^{n-1}\cdot h(x+[\lambda])\in(x^{d+1},\varpi_{C}^{n})\quad\forall\lambda\in\mathbb{F}_{q}
\]

which implies that
\[
h(x+[\lambda])\in(x^{d+1},\varpi_{C})\quad\forall\lambda\in\mathbb{F}_{q}
\]
Considering the reduction modulo $\varpi_{C}$, by Lemma \ref{subsec:Lemma divisibility by x^q-x},
it follows that $h(x)\in\left((x^{q}-x)^{d+1},\varpi_{C}\right)$,
hence
\[
f(x)\in\left((x^{q}-x)^{d+1}\right)+\varpi_{C}^{n-1}\cdot\left((x^{q}-x)^{d+1},\varpi_{C}\right)=\left((x^{q}-x)^{d+1},\varpi_{C}^{n}\right)
\]
establishing the claim.
\end{proof}
\begin{lem}
\label{lem:divisibility by x^q-x modulo pi^n}Let $n\in\mathbb{Z}$,
$k,d\in\mathbb{N}$. Let $f(x)=\sum_{i=0}^{k}c_{i}x^{i}\in C[x]$
be such that for all $0\le j\le d$ and all $\lambda\in\mathbb{F}_{q}$
we have
\[
\sum_{i=j}^{k}{i \choose j}c_{i}[\lambda]^{i-j}\in\varpi_{C}^{n}\mathcal{O}_{C}
\]
where $[\lambda]\in\mathcal{O}_{F}\hookrightarrow\mathcal{O}_{C}$
is the Teichmüller representative of $\lambda$. Then $f(x)\in(x^{q}-x)^{d+1}\cdot C[x]+\varpi_{C}^{n}\cdot\mathcal{O}_{C}[x]$.
\end{lem}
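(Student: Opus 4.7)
The plan is to reduce this statement directly to the previous lemma (Lemma~\ref{lem:x^q-x modulo pi^n}) by clearing denominators. Since $f \in C[x]$ has only finitely many coefficients $c_0,\ldots,c_k$, I would begin by choosing an integer $m \ge 0$ large enough that $\varpi_C^m c_i \in \mathcal{O}_C$ for every $i$ (so that $\varpi_C^m f \in \mathcal{O}_C[x]$) and, simultaneously, $n + m \ge 1$. Both conditions can be met simply by taking $m \ge \max\bigl(1-n,\; -v_C(c_0),\ldots,-v_C(c_k)\bigr)$.

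Set $\tilde f(x) := \varpi_C^m f(x) \in \mathcal{O}_C[x]$, with coefficients $\tilde c_i := \varpi_C^m c_i \in \mathcal{O}_C$. Multiplying the hypothesis by $\varpi_C^m$ gives, for all $0 \le j \le d$ and all $\lambda \in \mathbb{F}_q$,
$$\sum_{i=j}^{k} \binom{i}{j}\, \tilde c_i\, [\lambda]^{i-j} \;=\; \varpi_C^m \sum_{i=j}^{k} \binom{i}{j}\, c_i\, [\lambda]^{i-j} \;\in\; \varpi_C^{\,n+m}\mathcal{O}_C.$$
Since $n+m \ge 1$, Lemma~\ref{lem:x^q-x modulo pi^n} applies to $\tilde f$ with exponent $n+m$, yielding a decomposition
$$\tilde f(x) \;=\; (x^q - x)^{d+1}\, g(x) \;+\; \varpi_C^{\,n+m}\, h(x), \qquad g, h \in \mathcal{O}_C[x].$$

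Dividing this identity by $\varpi_C^m$ in $C[x]$ gives
$$f(x) \;=\; (x^q - x)^{d+1}\cdot \bigl(\varpi_C^{-m}\, g(x)\bigr) \;+\; \varpi_C^{\,n}\, h(x),$$
where the first summand lies in $(x^q-x)^{d+1}\cdot C[x]$ (since $\varpi_C^{-m}g \in C[x]$) and the second lies in $\varpi_C^n \cdot \mathcal{O}_C[x]$, which is the desired conclusion. There is essentially no obstacle here beyond verifying that $m$ can be chosen to satisfy both constraints simultaneously, and this is immediate. The only conceptual point worth noting is that the conclusion genuinely makes sense when $n < 0$: the coset $\varpi_C^n\mathcal{O}_C[x]$ is then a \emph{larger} $\mathcal{O}_C$-submodule of $C[x]$ than $\mathcal{O}_C[x]$, so the statement remains meaningful without requiring $f$ itself to have integral coefficients.
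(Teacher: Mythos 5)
Your proposal is correct and takes essentially the same route as the paper: both arguments clear denominators by rescaling $f$ by a power of $\varpi_{C}$ so that the integral Lemma~\ref{lem:x^q-x modulo pi^n} applies with a shifted exponent, and then rescale back. The only cosmetic difference is that the paper rescales by $\varpi_{C}^{-L}$ with $L=\min_{i}v_{C}(c_{i})$ and disposes of the case $n\le L$ trivially, whereas you choose the exponent $m$ large enough to guarantee both integrality and $n+m\ge1$ at once, avoiding the case split.
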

\begin{proof}
Let $L=\min_{0\le i\le k}v_{C}(c_{i})$. Consider $g(x)=\varpi_{C}^{-L}\cdot f(x)\in\mathcal{O}_{C}[x]$.
If $n\le L$, then as $f(x)\in\varpi_{C}^{L}\mathcal{O}_{C}[x]\subseteq\varpi_{C}^{n}\mathcal{O}_{C}[x]$,
we are done.

Else, $g(x)$ satisfies for all $0\le j\le d$ and all $\lambda\in\mathbb{F}_{q}$

\[
\sum_{i=j}^{k}{i \choose j}\varpi_{C}^{-L}\cdot c_{i}[\lambda]^{i-j}\in\varpi_{C}^{n-L}\mathcal{O}_{C}
\]
with $n-L\ge1$, hence by Lemma \ref{lem:x^q-x modulo pi^n}, $g(x)\in(x^{q}-x)^{d+1}\cdot\mathcal{O}_{C}[x]+\varpi_{C}^{n-L}\cdot\mathcal{O}_{C}[x]$,
hence $f(x)\in(x^{q}-x)^{d+1}\cdot C[x]+\varpi_{C}^{n}\cdot\mathcal{O}_{C}[x]$
.
\end{proof}
\begin{lem}
\label{subsec:Lemma divisibility of polynomial coefficients}Let $n\in\mathbb{Z}$
and let $k\in\mathbb{N}$. Let $d=\left\lfloor k/q\right\rfloor $.
Let $(c_{i})_{i=0}^{k}$ be a sequence in $C$ such that for all $0\le j\le d$,
and all $\lambda\in\mathbb{F}_{q}$, we have
\[
\sum_{i=j}^{k}{i \choose j}c_{i}[\lambda]^{i-j}\in\pi_{C}^{n}\mathcal{O}_{C}
\]
 where $[\lambda]\in\mathcal{O}_{F}\hookrightarrow\mathcal{O}_{C}$
is the Teichmüller representative of $\lambda$. Then $c_{i}\in\varpi_{C}^{n}\mathcal{O}_{C}$
for all $0\le i\le k$.
\end{lem}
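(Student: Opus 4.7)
The plan is to reduce the claim to the earlier Lemma \ref{lem:divisibility by x^q-x modulo pi^n} and then exploit a degree count forced by $d = \lfloor k/q \rfloor$. Applying that lemma to $f(x) = \sum_{i=0}^{k} c_i x^i$, one obtains a decomposition
\[
f(x) = (x^q - x)^{d+1} g(x) + \varpi_C^n h(x)
\]
with $g \in C[x]$ and $h \in \mathcal{O}_C[x]$. This decomposition is not unique, and a priori $h$ could have degree much larger than $k$, so we must control it.

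The first step is to normalize the decomposition by absorbing high-degree terms into the $(x^q - x)^{d+1}$ factor. Set $P(x) := (x^q - x)^{d+1}$; it is monic of degree $q(d+1)$ with coefficients in $\mathcal{O}_C$, so Euclidean division is valid inside $\mathcal{O}_C[x]$ and yields $h(x) = P(x) h_1(x) + h_2(x)$ with $h_1, h_2 \in \mathcal{O}_C[x]$ and $\deg h_2 < q(d+1)$. Substituting gives
\[
f(x) = P(x)\bigl(g(x) + \varpi_C^n h_1(x)\bigr) + \varpi_C^n h_2(x).
\]

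The second step is the degree comparison. Since $d = \lfloor k/q \rfloor$, writing $k = dq + r$ with $0 \le r < q$ yields $k \le dq + q - 1 < q(d+1) = \deg P$. Hence $\deg f < \deg P$ and $\deg h_2 < \deg P$, so the polynomial $f(x) - \varpi_C^n h_2(x)$ has degree strictly less than $\deg P$; if $g + \varpi_C^n h_1$ were nonzero, then $P \cdot (g + \varpi_C^n h_1)$ would have degree at least $\deg P$, a contradiction. By uniqueness of Euclidean division in $C[x]$, $g + \varpi_C^n h_1 = 0$, so $f(x) = \varpi_C^n h_2(x)$ with $h_2 \in \mathcal{O}_C[x]$, which gives $c_i \in \varpi_C^n \mathcal{O}_C$ for every $0 \le i \le k$.

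The main (and really only) obstacle is that the decomposition provided by Lemma \ref{lem:divisibility by x^q-x modulo pi^n} does not constrain $\deg h$, so a naive degree argument on $(x^q-x)^{d+1} g$ versus $f$ is not immediately available. Once one notices that $P$ is monic in $\mathcal{O}_C[x]$ and performs Euclidean division on the integral part $h$ rather than on $g$, the numerological inequality $k < q(d+1)$ finishes the proof essentially automatically.
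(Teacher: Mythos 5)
Your argument is correct and follows the same route as the paper: invoke Lemma \ref{lem:divisibility by x^q-x modulo pi^n} and then use the degree bound $\deg f\le k<q(d+1)$ to force $f\in\varpi_{C}^{n}\mathcal{O}_{C}[x]$. The Euclidean division of $h$ by the monic polynomial $(x^{q}-x)^{d+1}$ is just a careful justification of the step the paper states tersely with ``hence,'' so the two proofs are essentially identical.
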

\begin{proof}
By Lemma \ref{lem:divisibility by x^q-x modulo pi^n}, we see that
$f(x)=\sum_{i=0}^{k}c_{i}x^{i}\in(x^{q}-x)^{d+1}\cdot C[x]+\varpi_{C}^{n}\mathcal{O}_{C}[x]$,
but $\deg(f)\le k<q(d+1)$, hence $f(x)\in\varpi_{C}^{n}\mathcal{O}_{C}[x]$.
This establishes the Lemma.
\end{proof}
\begin{lem}
\label{subsec:Lemma sum coeffs divisibility} Let $k,d\in\mathbb{N}$.
Let $f(x)=\sum_{i=0}^{k}c_{i}x^{i}\in C[x]$ and let $n\in\mathbb{Z}$.
Assume that for all $0\le j\le d$, and all $\lambda\in\mathbb{F}_{q}$,
we have
\[
\sum_{i=j}^{k}{i \choose j}c_{i}[\lambda]^{i-j}\in\varpi_{C}^{n}\mathcal{O}_{C}
\]
 where $[\lambda]\in\mathcal{O}_{F}\hookrightarrow\mathcal{O}_{C}$
is the Teichmüller representative of $\lambda$. Then
\[
c_{i}\in\varpi_{C}^{n}\mathcal{O}_{C}\quad\forall0\le i\le d
\]
\begin{equation}
\sum_{l=d}^{\left\lfloor \frac{k-j}{q-1}\right\rfloor }{l \choose d}\cdot c_{j+l(q-1)}\in\varpi_{C}^{n}\mathcal{O}_{C}\quad\forall d+1\le j\le d+q-1\label{eq:coefficients relation modulo (x^q-x)^(d+1)}
\end{equation}

\end{lem}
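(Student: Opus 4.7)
The plan is to leverage Lemma \ref{lem:divisibility by x^q-x modulo pi^n}, which already gives $f(x) \in (x^q - x)^{d+1} \cdot C[x] + \varpi_C^n \mathcal{O}_C[x]$. Performing polynomial long division by the monic polynomial $(x^q - x)^{d+1}$, I would write $f(x) = (x^q - x)^{d+1} g(x) + r(x)$ with $g \in C[x]$ and $r \in C[x]$ of degree $< q(d+1)$. A short check---dividing any polynomial whose coefficients lie in $\varpi_C^n \mathcal{O}_C$ by a monic integral polynomial preserves $\varpi_C^n$-divisibility of both quotient and remainder---forces $r \in \varpi_C^n \mathcal{O}_C[x]$.

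The first assertion is then immediate: since $(x^q - x)^{d+1} = x^{d+1}(x^{q-1} - 1)^{d+1}$ has no terms of degree below $d+1$, the coefficient of $x^i$ in $(x^q - x)^{d+1} g(x)$ vanishes for $i \le d$, so $c_i = r_i \in \varpi_C^n \mathcal{O}_C$.

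For the second assertion, fix $j$ with $d + 1 \le j \le d + q - 1$. Because $\binom{l}{d} \in \mathbb{Z}$ and the coefficients of $r$ lie in $\varpi_C^n \mathcal{O}_C$, it is enough to verify the stronger identity
\[
\sum_{l \ge 0} \binom{l}{d} \tilde{c}_{j + l(q-1)} = 0,
\]
where $\tilde{c}_i$ is the coefficient of $x^i$ in $(x^q - x)^{d+1} g(x)$. Expanding $(x^q - x)^{d+1} = \sum_{l'=0}^{d+1} (-1)^{d+1-l'}\binom{d+1}{l'} x^{d+1 + l'(q-1)}$, writing $g = \sum_m g_m x^m$, setting $m = l - l'$ and swapping the order of summation recasts the identity as
\[
\sum_{m} g_{j-d-1+m(q-1)} \sum_{l'=0}^{d+1} (-1)^{d+1-l'} \binom{d+1}{l'} \binom{m + l'}{d} = 0.
\]

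The key (and only substantial) input is that $l' \mapsto \binom{m + l'}{d}$ is a polynomial in $l'$ of degree $d$, so its $(d+1)$-st forward difference at $0$ vanishes; this kills the inner sum for every $m \ge 0$. For $m < 0$, the hypothesis $d+1 \le j \le d+q-1$ yields $0 \le j - d - 1 \le q - 2$, whence for $m \le -1$ the index $j - d - 1 + m(q-1)$ is strictly negative and $g$ vanishes there. The only genuine obstacle I foresee is cleanly pinning down the upper range of summation (using $\deg g \le k - q(d+1)$ so that $\tilde{c}_{j+l(q-1)} = 0$ for $l > \lfloor (k-j)/(q-1) \rfloor$); apart from this and the combinatorial identity above, all steps are routine.
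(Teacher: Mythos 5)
Your proof is correct, and while it opens the same way as the paper's --- both invoke Lemma~\ref{lem:divisibility by x^q-x modulo pi^n} to write $f$ as a multiple of $(x^q-x)^{d+1}$ plus something in $\varpi_C^n\mathcal{O}_C[x]$, and both read off the first assertion from the fact that $(x^q-x)^{d+1}$ has no terms of degree $\le d$ --- you extract the relation (\ref{eq:coefficients relation modulo (x^q-x)^(d+1)}) quite differently. The paper computes the remainder of $f$ modulo $(x^q-x)^{d+1}$ explicitly: it proves by induction on $s$ the monomial reduction formula (\ref{eq:x^t mod (x^q-x)^(d+1)}), verifies the accompanying binomial identity by a direct factorial computation, and then reads the relation off the coefficient of $x^{j+d(q-1)}$ in the reduced polynomial (the $\delta_{m,l,d}$ there specialize to $\binom{m}{d}$ at $l=d$). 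You instead never compute the remainder: you show directly that the linear functional $\sum_{l}\binom{l}{d}(\cdot)_{j+l(q-1)}$ annihilates every multiple of $(x^q-x)^{d+1}$ of degree $\le k$, by expanding $(x^q-x)^{d+1}$, reindexing, and observing that the inner sum is the $(d+1)$-st forward difference of the degree-$d$ polynomial $l'\mapsto\binom{m+l'}{d}$, hence zero, while the terms with $m<0$ die because $0\le j-d-1\le q-2$ forces a negative index of $g$. This replaces the paper's explicit inductive reduction and its binomial-identity verification with one standard finite-difference vanishing, which is shorter and arguably more transparent; the price is that you only get the single relation you need rather than the full reduced polynomial the paper exhibits. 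The two loose ends you flag are genuinely routine: integrality of the remainder follows from dividing the $\varpi_C^n\mathcal{O}_C[x]$ part by the monic integral polynomial $(x^q-x)^{d+1}$ and invoking uniqueness of quotient and remainder, and the range issue is settled by noting that either $k<q(d+1)$, in which case $g=0$ and everything is trivial, or $\deg\bigl((x^q-x)^{d+1}g\bigr)\le k$, so $\tilde{c}_{j+l(q-1)}=0$ for $l>\left\lfloor (k-j)/(q-1)\right\rfloor$, and $\binom{l}{d}=0$ for $l<d$ takes care of the lower range.
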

\begin{proof}
By Lemma \ref{lem:divisibility by x^q-x modulo pi^n}, we see that
$f(x)\in(x^{q}-x)^{d+1}\cdot C[x]+\varpi_{C}^{n}\mathcal{O}_{C}[x]$.
We proceed by reducing $f(x)$ modulo $(x^{q}-x)^{d+1}$.

In order to do so, we first have to understand the reduction of a
general monomial $x^{t}$ modulo $(x^{q}-x)^{d+1}$.

We prove, by induction on $s$, that for every $0\le s\le\left\lfloor \frac{t-d-1}{q-1}\right\rfloor -d-1$
and every $t\ge q(d+1)$ we have
\begin{equation}
x^{t}\equiv\sum_{l=1}^{d+1}(-1)^{l+1}{d+1+s \choose l+s}\cdot{l+s-1 \choose s}x^{t-(l+s)(q-1)}\mod(x^{q}-x)^{d+1}\label{eq:x^t mod (x^q-x)^(d+1)}
\end{equation}

Indeed, for $s=0$, this is simply a restatement of the binomial expansion,
as
\[
x^{t}=x^{t-(d+1)\cdot q}\cdot x^{(d+1)\cdot q}\equiv x^{t-(d+1)\cdot q}\cdot\left(x^{(d+1)q}-(x^{q}-x)^{d+1}\right)=
\]
\[
=x^{t-(d+1)\cdot q}\cdot\left(x^{(d+1)q}-\sum_{l=0}^{d+1}(-1)^{l}{d+1 \choose l}\cdot\left(x^{q}\right)^{(d+1)-l}\cdot x^{l}\right)=
\]
\[
=x^{t-(d+1)\cdot q}\cdot\sum_{l=1}^{d+1}(-1)^{l+1}{d+1 \choose l}x^{(d+1)\cdot q-l(q-1)}=
\]
\[
=\sum_{l=1}^{d+1}(-1)^{l+1}{d+1 \choose l}x^{t-l(q-1)}\mod(x^{q}-x)^{d+1}
\]
Assume it holds for $s-1$, and let us prove it holds for $s$.

By the induction hypothesis
\begin{equation}
x^{t}\equiv\sum_{l=1}^{d+1}(-1)^{l+1}{d+s \choose l+s-1}{l+s-2 \choose s-1}x^{t-(l+s-1)(q-1)}\mod(x^{q}-x)^{d+1}\label{eq:induction step x^t mod (x^q-x)^(d+1)}
\end{equation}

Since $s\le\left\lfloor \frac{t-d-1}{q-1}\right\rfloor -d-1$, we
see that
\[
(q-1)(d+1+s)\le t-(d+1)\Rightarrow t-s(q-1)\ge q(d+1)
\]

This implies, by the case $s=0$, that
\[
x^{t-s(q-1)}\equiv\sum_{l=1}^{d+1}(-1)^{l+1}{d+1 \choose l}\cdot x^{t-(l+s)(q-1)}\mod(x^{q}-x)^{d+1}
\]

Substituting in (\ref{eq:induction step x^t mod (x^q-x)^(d+1)}) we
get
\begin{eqnarray*}
x^{t} & \equiv & {d+s \choose s}\cdot\sum_{l=1}^{d+1}(-1)^{l+1}{d+1 \choose l}\cdot x^{t-(l+s)(q-1)}+\\
 &  & +\sum_{l=2}^{d+1}(-1)^{l+1}{d+s \choose l+s-1}{l+s-2 \choose s-1}x^{t-(l+s-1)(q-1)}=
\end{eqnarray*}
\[
=\sum_{l=1}^{d+1}(-1)^{l+1}\left({d+s \choose s}{d+1 \choose l}-{d+s \choose l+s}{l+s-1 \choose s-1}\right)x^{t-(l+s)(q-1)}
\]

Calculation yields
\[
{d+s \choose s}{d+1 \choose l}-{d+s \choose l+s}{l+s-1 \choose s-1}=
\]
\[
=\frac{(d+s)!(d+1)!}{s!d!l!(d+1-l)!}-\frac{(d+s)!(l+s-1)!}{(l+s)!(d-l)!l!(s-1)!}=
\]
\[
=\frac{(d+s)!\cdot(d+1)\cdot(l+s)}{(l+s)\cdot s!l!(d+1-l)!}-\frac{(d+s)!\cdot s\cdot(d+1-l)}{(l+s)\cdot(d+1-l)!l!s!}=
\]
\[
=\frac{(d+s)!}{(l+s)\cdot s!l!(d+1-l)!}\cdot\left((d+1)l+(d+1)s-(d+1)s+sl\right)=
\]
\[
=\frac{(d+s+1)!}{(l+s)\cdot s!(l-1)!(d+1-l)!}=
\]
\[
=\frac{(d+1+s)!}{(l+s)!(d+1-l)!}\cdot\frac{(l+s-1)!}{s!(l-1)!}={d+1+s \choose l+s}{l+s-1 \choose s}
\]
establishing the identity (\ref{eq:x^t mod (x^q-x)^(d+1)}).

It now follows from (\ref{eq:x^t mod (x^q-x)^(d+1)}), by letting
$t=j+l(q-1)$ and $s=l-d-1$, that
\begin{equation}
\begin{split}
&f(x)=\sum_{i=0}^{k}c_{i}x^{i}=\sum_{i=0}^{d}c_{i}x^{i}+\sum_{j=d+1}^{d+q-1}\sum_{l=0}^{\left\lfloor \frac{k-j}{q-1}\right\rfloor }c_{j+l(q-1)}x^{j+l(q-1)}\equiv \\
&\equiv\sum_{i=0}^{d}c_{i}x^{i}+ \\
&+\sum_{j=d+1}^{d+q-1}\left(\sum_{l=0}^{d}c_{j+l(q-1)}x^{j+l(q-1)}+\sum_{l=d+1}^{\left\lfloor \frac{k-j}{q-1}\right\rfloor }c_{j+l(q-1)}\sum_{r=1}^{d+1}\gamma_{r,l,d}\cdot x^{j-(r-d-1)(q-1)}\right)=\\
&=\sum_{i=0}^{d}c_{i}x^{i}+\\
&\sum_{j=d+1}^{d+q-1}\sum_{l=0}^{d}\left(c_{j+l(q-1)}+\sum_{m=d+1}^{\left\lfloor \frac{k-j}{q-1}\right\rfloor }\delta_{m,l,d}\cdot c_{j+m(q-1)}\right)x^{j+l(q-1)}\mod(x^{q}-x)^{d+1}
\end{split}
\end{equation}

where
\[
\gamma_{r,l,d}=(-1)^{r+1}{l \choose r+l-d-1}\cdot{r+l-d-2 \choose l-d-1}
\]

and
\[
\delta_{m,l,d}=(-1)^{d-l}{m \choose l}{m-l-1 \choose d-l}.
\]
As this is a polynomial of degree less than $q(d+1)$, and we know
that $f(x)\in(x^{q}-x)^{d+1}\cdot C[x]+\varpi_{C}^{n}\cdot\mathcal{O}_{C}[x]$,
it follows that it must lie in $\varpi_{C}^{n}\cdot\mathcal{O}_{C}[x]$.

In particular, $c_{i}\in\varpi_{C}^{n}\mathcal{O}_{C}$ for all $0\le i\le d$,
and looking at the coefficient of $x^{j+d(q-1)}$ yields (\ref{eq:coefficients relation modulo (x^q-x)^(d+1)}),
as claimed.
\end{proof}
\begin{lem}
\label{subsec:Lemma divisibility of binomial coefficient} Let $q$
be a power of a prime number $p$. Let $k=d\cdot q+r$ be such that
$d<q$ and $0\le r<q-d$. Then for any $0\le i\le r$, any $0\le j\le d$
and any $j+1\le l\le d$, one has $p\mid{k-i \choose k-j-l(q-1)}$.
\end{lem}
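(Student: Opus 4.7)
The plan is to apply Kummer's theorem, which asserts that $v_p\binom{n}{m}$ equals the number of carries when one adds $m$ and $n-m$ in base $p$. Here $n = k-i$ and $m = k-j-l(q-1)$, so $n-m = (j-i) + l(q-1)$. Thus it suffices to exhibit at least one base-$p$ carry when adding $m$ and $n-m$.

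First, I would rewrite both summands in base $q = p^f$:
$$m = (d-l)\,q + (r+l-j), \qquad n-m = (l-1)\,q + (q-l+j-i).$$
The next step is to check that each of the four quantities $d-l$, $r+l-j$, $l-1$, $q-l+j-i$ is a legal base-$q$ digit, i.e.\ lies in $[0,q-1]$. The bounds $0 \le d-l \le d < q$ and $0 \le l-1 \le d-1 < q$ are immediate from $j+1 \le l \le d$ and $d < q$. For the lower digits, the hypotheses $l \ge j+1$, $i \le r$, $l \le d$ and $r < q-d$ give $1 \le r+l-j \le r+d < q$ and $1 \le q-l+j-i \le q-1$. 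This is the only place where the hypothesis $r < q - d$ gets used, and it is arguably the main (if mild) obstacle, since the whole argument collapses if $r+l-j$ were allowed to reach $q$.

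The key observation is then a one-line computation in the ones-place of base $q$:
$$(r+l-j) + (q-l+j-i) = q + (r-i) \;\ge\; q = p^{f}.$$
Because the sum of the two $f$-digit base-$p$ blocks representing $r+l-j$ and $q-l+j-i$ exceeds $p^f$, the base-$p$ addition of $m$ and $n-m$ must produce a carry out of position $f-1$ into position $f$. Consequently at least one base-$p$ carry occurs, and Kummer's theorem yields
$$v_p\!\binom{k-i}{\,k-j-l(q-1)\,} \;\ge\; 1,$$
which is precisely the divisibility statement of the lemma. No further work on the upper base-$q$ digits is needed, since we only need the existence of a single carry.
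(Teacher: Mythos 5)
Your proof is correct and follows essentially the same route as the paper: both rest on Kummer's theorem together with the base-$q$ decomposition $k-j-l(q-1)=(d-l)q+(r+l-j)$, with the hypothesis $r<q-d$ used exactly to keep the low digit $r+l-j$ below $q$. The only cosmetic difference is that you exhibit an explicit carry out of the lowest base-$q$ block when adding $k-j-l(q-1)$ and $(j-i)+l(q-1)$, whereas the paper phrases the same fact as the bottom digit $r+l-j$ exceeding the top digit $r-i$; the two criteria are equivalent.
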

\begin{proof}
Since $i\le r<q$, we know that $0\le r-i<q$ and $d<q$, so that
$k-i=d\cdot q+(r-i)$ is the base $q$ representation of $k-i$.

Since $j+1\le l\le d$, one has $1\le r+1\le r+l-j\le r+l\le r+d<q$
and it follows that
\[
k-j-l(q-1)=d\cdot q+r-l\cdot q+l-j=(d-l)\cdot q+(r+l-j)
\]
is the base $q$ representation of $k-j-l(q-1)$.

Finally, by Kummer's Theorem on binomial coefficients, as for any
$l\ge j+1$ and any $i,j\ge0$,
\[
r+l-j\ge r+1>r\ge r-i
\]
there is at least one digit in the base $p$ representation of $r+l-j$,
which is larger than the corresponding one in the base $p$ representation
of $r-i$, hence

\[
p\mid{k-i \choose k-j-l(q-1)}
\]
establishing the result.
\end{proof}
\begin{lem}
\label{lem:nonsingular matrix}Let $a\in\mathbb{Z}$. The matrix $A=A_{m}(a)\in\mathbb{Z}^{m\times m}$
with entries $(A_{li})_{l,i=1}^{m}={a+l \choose i-1}$ satisfies $\det A=1$.
\end{lem}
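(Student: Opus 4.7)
The plan is to compute $\det A_m(a)$ by induction on $m$, using Pascal's identity $\binom{n}{k}-\binom{n-1}{k}=\binom{n-1}{k-1}$ to clear out the first column.

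For the base case $m=1$, the matrix is the $1\times 1$ matrix $\bigl(\binom{a+1}{0}\bigr)=(1)$, so $\det A_1(a)=1$. For the inductive step, I would apply the elementary row operations $R_l\mapsto R_l-R_{l-1}$ for $l=m,m-1,\ldots,2$, performed in that order so that each subtraction uses the original row $R_{l-1}$. These operations leave the determinant unchanged. The first row remains $\bigl(\binom{a+1}{0},\binom{a+1}{1},\ldots,\binom{a+1}{m-1}\bigr)$, while for $l\ge 2$ the new $(l,i)$ entry equals
\[
\binom{a+l}{i-1}-\binom{a+l-1}{i-1}=\binom{a+l-1}{i-2}
\]
for $i\ge 2$, and equals $0$ for $i=1$. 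Thus the first column becomes $(1,0,\ldots,0)^T$.

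Expanding along the first column, I get $\det A_m(a)$ equal to the determinant of the $(m-1)\times(m-1)$ submatrix obtained by deleting the first row and first column. Reindexing via $l'=l-1$ and $i'=i-1$, the $(l',i')$ entry of this submatrix is $\binom{a+l'}{i'-1}$, which is exactly the $(l',i')$ entry of $A_{m-1}(a)$. By the inductive hypothesis, $\det A_{m-1}(a)=1$, so $\det A_m(a)=1$ as well.

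There is no real obstacle here: the only point requiring a tiny bit of care is ordering the row operations correctly (top-down subtractions using the updated rows would change the entries), and checking that the resulting submatrix is genuinely of the same form with $m$ replaced by $m-1$. Both are immediate from Pascal's identity.
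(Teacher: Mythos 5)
Your proof is correct and follows essentially the same route as the paper: induction on $m$, row operations $R_l \mapsto R_l - R_{l-1}$ justified by Pascal's identity, and expansion along the resulting first column $(1,0,\ldots,0)^T$ to reduce to $A_{m-1}(a)$. Your extra remark about performing the subtractions bottom-up (so each uses the original preceding row) is a sensible clarification of the same argument.
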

\begin{proof}
We prove it by induction on $m$. For $m=1$, this is the matrix $(1)$,
which is nonsingular.

Note that for any $2\le l\le m$, and any $1\le i\le m$, one has
\[
{a+l \choose i-1}-{a+l-1 \choose i-1}={a+l-1 \choose i-2}
\]

where ${k \choose -1}=0$.

Therefore, subtracting from each row its preceding row, we obtain
the matrix $B$, with $B_{1i}=A_{1i}$ for all $1\le i\le m$, and
$B_{li}={a+l-1 \choose i-2}$.

By the induction hypothesis, the matrix $(B_{li})_{l,i=2}^{m}$ is in fact $A_{m-1}(a)$, 
and $\det(B_{li})_{l,i=2}^{m}=1$. But, as $B_{l1}=0$
for all $l\ge2$ and $B_{11}=1$, it follows that $\det A=\det B=1$.
\end{proof}
\begin{cor}
\label{cor:nonsingular modified matrix}Let $a\in\mathbb{Z}$, $m\in\mathbb{N}$.
Let $t\in\{2,\ldots,m\}$. Consider the matrix $A\in\mathbb{Z}^{m\times m}$
with entries
\[
A_{li}=\begin{cases}
{a+l \choose i-1} & t\le i\le m\\
{a+l+1 \choose i-1} & 1\le i<t
\end{cases}\quad\forall l\in\{1,2,\ldots,m\}
\]
Then $\det A=1$.
\end{cor}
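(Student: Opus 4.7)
The plan is to express the modified matrix $A$ from the corollary as a product $A = B \cdot M$, where $B = A_m(a)$ is the matrix from Lemma \ref{lem:nonsingular matrix} (so $\det B = 1$) and $M$ is a unipotent upper-triangular matrix, so that $\det A = \det B \cdot \det M = 1$.

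The identification of $M$ comes directly from Pascal's identity
\[
{a+l+1 \choose i-1} = {a+l \choose i-1} + {a+l \choose i-2},
\]
with the convention ${a+l \choose -1} = 0$. Writing $B_{li} = {a+l \choose i-1}$, this reads $A_{li} = B_{li} + B_{l,i-1}$ when $1 \le i < t$, and $A_{li} = B_{li}$ when $t \le i \le m$. In the special case $i = 1$ both expressions collapse to the all-ones column, so no correction is needed there. Translating this column-by-column relation into the matrix equation $A = B M$, the matrix $M$ must satisfy: column $i$ of $M$ equals the standard basis vector $e_i$ for $i \ge t$ or $i = 1$, and equals $e_i + e_{i-1}$ for $2 \le i < t$. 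In other words, $M$ is the identity plus $1$'s on the superdiagonal in positions $(i-1, i)$ for $2 \le i \le t-1$, hence upper triangular with $1$'s on the diagonal.

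From this it is immediate that $\det M = 1$, and combining with Lemma \ref{lem:nonsingular matrix} one gets $\det A = \det(BM) = 1 \cdot 1 = 1$.

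There is essentially no obstacle here; the only thing that needs checking is the edge case $i = 1$, where one must observe that ${a+l+1 \choose 0} = {a+l \choose 0} = 1$ so that column $1$ of $A$ agrees with column $1$ of $B$ without needing any correction term (the Pascal identity would formally produce a spurious ${a+l \choose -1}$, but this is zero by convention). This is the reason the hypothesis restricts to $t \ge 2$ rather than $t \ge 1$, and is what makes $M$ well-defined as a genuine upper-triangular matrix rather than requiring a $0$-th column.
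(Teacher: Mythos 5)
Your proof is correct and is essentially the paper's argument: the paper performs the same Pascal-identity column additions (adding each earlier column to its successor for the first $t-1$ columns) and notes these preserve the determinant, which is exactly your factorization $A=BM$ with $M$ unipotent upper triangular. Your explicit handling of the $i=1$ column is a nice clarification but not a departure from the paper's route.
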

\begin{proof}
This matrix is obtained from the one in Lemma \ref{lem:nonsingular matrix}
by adding each of the first $t-2$ columns to its subsequent column,
since
\[
{a+l+1 \choose i-1}={a+l \choose i-1}+{a+l \choose i-2}
\]
As these operations do not affect the determinant, the result follows.
\end{proof}
\begin{cor}
\label{cor:nonsingular matrix large valuation}Let $k\in\mathbb{N}$.
Write $k=d\cdot q+r$, with $1\le d<p$, $0\le r<q$ and assume that
$d-1\le r$. Let $1\le m\le d$. Then the matrix $A\in\mathbb{F}_{p}^{m\times m}$
with entries $(A_{il})_{i,l=1}^{m}={k-i+1 \choose m+l(q-1)}$, is
nonsingular.
\end{cor}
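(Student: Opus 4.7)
The plan is to reduce the corollary to Lemma \ref{lem:nonsingular matrix} via a base-$q$ factorization of the binomial entries using Lucas' theorem. Under the hypotheses $1 \le m \le d < p$, $0 \le r < q$, and $d - 1 \le r$, I first verify that $k - i + 1 = dq + (r - i + 1)$ and $m + l(q-1) = lq + (m - l)$ are genuine base-$q$ decompositions. Indeed, $i \le m \le d \le r + 1$ forces $0 \le r - i + 1 \le r < q$, while $l \le m \le d < q$ and $0 \le m - l \le m - 1 < d < q$, so every ``digit'' lies in $\{0, \ldots, q - 1\}$.

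Since $d < p$ and $l < p$ each occupy a single base-$p$ digit, applying Lucas' theorem separately to the low and high halves of the base-$p$ expansions yields
\[
{k - i + 1 \choose m + l(q - 1)} \equiv {d \choose l} {r - i + 1 \choose m - l} \pmod{p}.
\]
Because $1 \le l \le m \le d < p$, each ${d \choose l}$ is a unit in $\mathbb{F}_p$, so extracting ${d \choose l}$ from the $l$-th column of $A$ reduces the claim to the non-singularity of the matrix $B \in \mathbb{F}_p^{m \times m}$ with entries $B_{il} = {r - i + 1 \choose m - l}$. Reversing the row order via $i \mapsto m + 1 - i$ and the column order via $l \mapsto m + 1 - l$ (the two sign changes cancel) transforms $B$ into the matrix $A_m(r - m)$ of Lemma \ref{lem:nonsingular matrix}, with entries ${(r - m) + i' \choose l' - 1}$. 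The hypothesis $r \ge d - 1 \ge m - 1$ ensures $r - m \ge -1$, so these binomial coefficients are well-defined non-negative integers and the lemma applies, giving $\det B = 1$. Consequently,
\[
\det A \equiv \prod_{l = 1}^m {d \choose l} \not\equiv 0 \pmod{p}.
\]

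The main delicate point is verifying the base-$q$ digit decomposition: each of the three hypotheses ($m \le d$, $d < p$, $d - 1 \le r$) is needed precisely to keep the relevant digits in the range $\{0, \ldots, q - 1\}$, and once this is established the remainder of the argument is a mechanical reduction to Lemma \ref{lem:nonsingular matrix} together with the observation that $d < p$ makes the column scaling factors invertible modulo $p$.
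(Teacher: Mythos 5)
Your proposal is correct and follows essentially the same route as the paper: the same base-$q$ splitting $k-i+1=dq+(r-i+1)$, $m+l(q-1)=lq+(m-l)$, Lucas' theorem to factor each entry as ${d \choose l}{r-i+1 \choose m-l}$ modulo $p$, division of the $l$-th column by the unit ${d \choose l}$, and identification of the resulting matrix with the one in Lemma \ref{lem:nonsingular matrix} after permuting rows and columns. The only difference is that you spell out the reindexing $i\mapsto m+1-i$, $l\mapsto m+1-l$ and the digit-range checks explicitly, which the paper leaves implicit.
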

\begin{proof}
For any $1\le l\le m$, we note that $m+l(q-1)=lq+(m-l)$, hence (as
$d<q$ and $r-i+1\ge r-d+1\ge0$) by Lucas' Theorem
\[
{k-i+1 \choose m+l(q-1)}={dq+r-i+1 \choose lq+(m-l)}\equiv{d \choose l}\cdot{r-i+1 \choose m-l}\mod p
\]
Since $1\le l\le d<p$, we get that the ${d \choose l}$ are nonzero
mod $p$, hence we can divide the $l$-th column by the appropriate
multiplier without affecting the singularity of $A$, call the resulting
matrix $B$.

Then $B_{il}={r-i+1 \choose m-l}$, which up to rearranging rows and
columns, is the matrix from Lemma \ref{lem:nonsingular matrix}, hence
nonsingular.
\end{proof}

\subsection{The case $v_{F}(a)\ge\left\lfloor \frac{k}{q}\right\rfloor $}

In this section, we will prove the following theorem, which will establish
$(iii)$ in Theorem \ref{thm:Main Theorem}.
\begin{thm}
Let $0\le k\le\min\left(p\cdot q-1,\frac{q^{2}}{2}\right)$. Assume
further that $k=dq+r$ with $d-1\le r<q$. Let $a\in\mathcal{O}_{C}$
be such that $v_{F}(a)\ge d$, and let $N\in\mathbb{Z}_{>0}$. There
exists a constant $\epsilon\in\mathbb{Z}_{\ge0}$ depending only on
$N,k,a$ such that for all $n\in\mathbb{Z}_{\ge0}$, and all $f\in ind_{KZ}^{G}\underline{\rho}_{k}$
\[
(T-a)(f)\in B_{N}+\varpi^{n}ind_{KZ}^{G}\underline{\rho}_{k}^{0}\Rightarrow f\in B_{N-1}+\varpi^{n-\epsilon}ind_{KZ}^{G}\underline{\rho}_{k}^{0}
\]
\end{thm}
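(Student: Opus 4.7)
The plan is to mimic the descending sphere-by-sphere induction strategy of \cite{breuil2003quelques2,de2013existence}, adapted to the large-weight setting. By the reduction of the preceding theorem, I may restrict to $f\in B^{0}$ and write $f=\sum_{m=0}^{M}f_{m}$ with $f_{m}=\sum_{\mu\in I_{m}}[g_{m,\mu}^{0},v_{\mu}^{m}]$ and $v_{\mu}^{m}=\sum_{i=0}^{k}c_{i,\mu}^{m}\,e_{k,i}$. Applying Corollary~\ref{cor:definition of coefficients}, the hypothesis $(T-a)(f)\in B_{N}+\varpi^{n}ind_{KZ}^{G}\underline{\rho}_{k}^{0}$ translates into the conditions $C_{j,\mu}^{m}\in\varpi^{n}\mathcal{O}_{C}$ for every $m\ge N+1$, $\mu\in I_{m}$ and $0\le j\le k$, together with the boundary relation $T^{+}(f_{M})\in\varpi^{n}ind_{KZ}^{G}\underline{\rho}_{k}^{0}$ at sphere $M+1$.

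I would first handle the top sphere. Since $f_{M+1}=0$, the boundary relation reduces, for each $\mu'\in I_{M}$, $\lambda\in\mathbb{F}_{q}$ and $0\le j\le k$, to
\[
\varpi^{j}\sum_{i\ge j}c_{i,\mu'}^{M}{i\choose j}(-[\lambda])^{i-j}\in\varpi^{n}\mathcal{O}_{C}.
\]
Because $k\le\tfrac{1}{2}q^{2}$ gives $d=\lfloor k/q\rfloor$ with $k<q(d+1)$, after absorbing the $\varpi^{j}$-factor (worst at $j=d$) Lemma~\ref{subsec:Lemma divisibility of polynomial coefficients} applies and yields $c_{i,\mu'}^{M}\in\varpi^{n-d}\mathcal{O}_{C}$. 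Thus $f_{M}\in\varpi^{n-d}ind_{KZ}^{G}\underline{\rho}_{k}^{0}$.

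For the inductive step, assume $f_{m+1},\dots,f_{M}$ have been shown to lie in $\varpi^{n-\epsilon}ind_{KZ}^{G}\underline{\rho}_{k}^{0}$. The relation $C_{j,\mu}^{m+1}\in\varpi^{n}\mathcal{O}_{C}$ at sphere $m+1$ rewrites as
\[
\varpi^{j}\sum_{i\ge j}c_{i,[\mu]_{m}}^{m}{i\choose j}(-\lambda_{\mu})^{i-j}\equiv a\,c_{j,\mu}^{m+1}-\bigl[T^{-}(f_{m+2})\bigr]_{j,\mu}\pmod{\varpi^{n}}.
\]
The hypothesis $v_{F}(a)\ge d$ places $a\,c_{j,\mu}^{m+1}$ in $\varpi^{n-\epsilon+d}\mathcal{O}_{C}$, while the $T^{-}$-term lies in $\varpi^{n-\epsilon}\mathcal{O}_{C}$ because each $\varpi^{k-i}$-factor has nonnegative valuation. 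Applying Lemmas~\ref{subsec:Lemma divisibility of polynomial coefficients} and~\ref{subsec:Lemma sum coeffs divisibility} to the coefficients $c_{\cdot,[\mu]_{m}}^{m}$ would then propagate the divisibility down to $f_{m}$.

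The principal obstacle is to make sure the loss $\epsilon$ depends only on $N$, $k$, $a$ and not on $M$: a naive descending induction loses an extra factor $\varpi^{d}$ per sphere, because the $i=k$ term in $T^{-}$ carries $\varpi^{0}$ and transmits the divisibility of $c_{k,\cdot}^{m+1}$ unchanged. To prevent this accumulation, I would exploit the hypotheses $d-1\le r$ and $d\le p-1$ (the latter following from $k\le pq-1$) via Corollary~\ref{cor:nonsingular matrix large valuation}: its nonsingular matrix inverts the partial relations (\ref{eq:coefficients relation modulo (x^q-x)^(d+1)}) of Lemma~\ref{subsec:Lemma sum coeffs divisibility} and recovers the top coefficients $c_{i,\cdot}^{m}$ with $i>d$ at a bounded, $m$-independent cost. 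Together with the $p$-divisibility of binomial coefficients from Lemma~\ref{subsec:Lemma divisibility of binomial coefficient}, the extra margin afforded by $v_{F}(a)\ge d$ should then stabilize the induction and produce a uniform $\epsilon=\epsilon(N,k,a)$.
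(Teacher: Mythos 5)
Your setup (sphere-by-sphere descending induction, the base case at the top sphere, and the observation that a naive induction loses $\varpi^{d}$ per sphere through the $i=k$ term of $T^{-}$) matches the paper, and you have correctly located the crux: one must show the loss does not accumulate, using $d-1\le r$, $d<p$ and $v_{F}(a)\ge d$ via Corollary~\ref{cor:nonsingular matrix large valuation}. But the proposal stops exactly where the proof begins: the bootstrap that makes $\epsilon$ uniform is only asserted ("should then stabilize the induction"), and the mechanism you sketch for it is not the one that works. You say the nonsingular matrix "inverts the partial relations (\ref{eq:coefficients relation modulo (x^q-x)^(d+1)}) of Lemma~\ref{subsec:Lemma sum coeffs divisibility} and recovers the top coefficients $c_{i,\cdot}^{m}$ with $i>d$". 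In this case the relevant linear system is not an intra-sphere relation among the coefficients of $f_{m}$ at all: the paper first gets the crude bound $c_{i,\mu}^{m-1}\in\varpi^{n-2d}\mathcal{O}_{C}$, then looks at the sphere equations $C_{j+l(q-1),\mu}^{m}\in\varpi^{n}\mathcal{O}_{C}$ for $1\le j\le d$, $1\le l\le d$. There, since $2d\le q$ (from $k\le q^{2}/2$) the $T^{+}$ contribution is killed mod $\varpi^{n}$, and $a\,c^{m}\in\varpi^{n}$ since $v_{F}(a)\ge d$; only the top $d$ coefficients of $f_{m+1}$ survive in $T^{-}$ (the others carry $\varpi^{k-i}$ with $k-i\ge d$), and $[\lambda]^{i-j-l(q-1)}=[\lambda]^{i-j}$ because the exponent is positive (this is where $d-1\le r$ enters). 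One thus obtains, for each $j$, a linear system in the quantities $x_{ij}$ defined by $\sum_{\lambda}c_{k-i,\mu+\varpi^{m}[\lambda]}^{m+1}[\lambda]^{k-i-j}=\varpi^{n-d}x_{ij}$, with coefficient matrix $\bigl({k-i\choose j+l(q-1)}\bigr)$; Corollary~\ref{cor:nonsingular matrix large valuation} makes it invertible mod $p$, giving $x_{ij}\in\varpi^{j-i}\mathcal{O}_{C}$. Feeding this improved bound on the $T^{-}$ part back into the equations $C_{j,\mu}^{m}$ for $0\le j\le d$ yields $\sum_{i\ge j}{i\choose j}c_{i,\mu}^{m-1}[\lambda]^{i-j}\in\varpi^{n-d}\mathcal{O}_{C}$ and hence, by Lemma~\ref{subsec:Lemma divisibility of polynomial coefficients}, $c_{i,\mu}^{m-1}\in\varpi^{n-d}\mathcal{O}_{C}$: the bound is stable and $\epsilon=d$ works. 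None of this quantitative chain appears in your proposal, so the uniformity of $\epsilon$ is not actually established.

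A secondary point: you invoke Lemma~\ref{subsec:Lemma divisibility of binomial coefficient}, but that lemma requires $r<q-d$, which is not part of the hypotheses here (case $(iii)$ allows $r\ge q-d$), and the paper does not use it in this case; the $p$-divisibility input is exactly the Lucas-type computation inside Corollary~\ref{cor:nonsingular matrix large valuation}, which needs $d<p$ (from $k\le pq-1$) and $d-1\le r$, not the divisibility statement of that lemma.
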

\begin{proof}
As before, we may assume that $f=\sum_{m=0}^{M}f_{m}$ where $f_{m}\in S_{N+m}^{0}$,
and denote $f_{m}=0$ for $m>M$. Looking at $S_{N+m}$, we have the
equations

\[
T^{-}(f_{m+1})+T^{+}(f_{m-1})-af_{m}\in\varpi^{n}ind_{KZ}^{G}\underline{\rho}_{k}^{0}\quad1\le m\le M+1
\]

We shall prove the theorem with $\epsilon=d$.

Assume, by descending induction on $m$, that $f_{m},f_{m+1}\in\varpi^{n-d}ind_{KZ}^{G}\underline{\rho}_{k}^{0}$.
We will show that $f_{m-1}\in\varpi^{n-d}ind_{KZ}^{G}\underline{\rho}_{k}^{0}$.

By the above equations, we immediately obtain from (\ref{eq:formula for the coefficients})
(note that $af_{m}\in\varpi^{n}ind_{KZ}^{G}\underline{\rho}_{k}^{0}$,
since $v_{F}(a)\ge d$)
\begin{equation}
\sum_{i=j}^{k}{i \choose j}c_{i,\mu}^{m-1}[\lambda]^{i-j}\in\varpi^{n-d-j}\mathcal{O}_{C}\label{eq:first approximation for divisibility}
\end{equation}
for all $\mu\in I_{m-1}$, all $\lambda\in\mathbb{F}_{q}$, and all
$0\le j\le d$.

By Lemma \ref{subsec:Lemma divisibility of polynomial coefficients},
it follows that for all $i$, $c_{i,\mu}^{m-1}\in\varpi^{n-2d}\mathcal{O}_{C}$.

Next, for any $1\le j\le d$, consider the formulas for $C_{j+l(q-1),\mu}^{m}$
for any $1\le l\le d$. Note that $j+l(q-1)\le d+d(q-1)=dq\le k$.

Since $k\le q^{2}/2$, one has
\[
d=\left[\frac{k}{q}\right]\le\frac{k}{q}\le\frac{q}{2}\Rightarrow2d\le q
\]
so that $n-2d+q\ge n$.

Therefore, we get that
\[
\varpi^{j+l(q-1)}{i \choose j+l(q-1)}c_{i,\mu}^{m-1}\in\varpi^{q}c_{i,\mu}^{m-1}\mathcal{O}_{C}\subseteq\varpi^{n-2d+q}\mathcal{O}_{C}\subseteq\varpi^{n}\mathcal{O}_{C}
\]
for all $j,l$. Since for $i\le k-d$, $\varpi^{d}\mid\varpi^{k-i}$
and $c_{i,\mu+\varpi^{m}[\lambda]}^{m+1}\in\varpi^{n-d}\mathcal{O}_{C}$,
it follows that
\begin{equation}
\begin{split}
&C_{j+l(q-1),\mu}^{m}\equiv \\
&\equiv \sum_{i=k-d+1}^{k}\varpi^{k-i}{i \choose j+l(q-1)}\sum_{\lambda\in\kappa_{F}}c_{i,\mu+\varpi^{m}[\lambda]}^{m+1}[\lambda]^{i-j-l(q-1)} \equiv \\
&\equiv0\mod\varpi^{n}\mathcal{O}_{C}
\end{split}
\end{equation}
Since $k=d\cdot q+r$, with $r\ge d$, we see that $k-d+1-d\cdot q=r+1-d\ge1$,
showing that for any $1\le l\le d$, any $k-d+1\le i\le k$, we get
$i-j-l(q-1)\ge1$, hence for any $\lambda\in\kappa_{F}$, $[\lambda]^{i-j-l(q-1)}=[\lambda]^{i-j}$.
(Had $i-j-l(q-1)$ been $0$, this is violated when $\lambda=0$!).

By the induction hypothesis, we know that $c_{k-i,\mu}^{m+1}\in\varpi^{n-d}\mathcal{O}_{C}$.
Write, for $0\le i\le d-1$ and $1\le j\le d$, $\sum_{\lambda\in\mathbb{F}_{q}}c_{k-i,\mu+\pi^{m}[\lambda]}^{m+1}[\lambda]^{k-i-j}=\varpi^{n-d}\cdot x_{ij}$
for some $x_{ij}\in\mathcal{O}_{C}$. Then the above equations for
$1\le l\le d$ yield
\begin{equation}
\sum_{i=0}^{d-1}\varpi^{i}{k-i \choose j+l(q-1)}\cdot x_{ij}\equiv0\mod\varpi^{d}\label{eq:T^(-) bootstrapping formula}
\end{equation}

Let us prove that that $x_{ij}\in\varpi^{j-i}\mathcal{O}_{C}$ for
all $1\le j\le d$ and all $0\le i\le j$. Note that for $i=j$, it
is trivial, so we will prove it for $0\le i\le j-1$.

Indeed, fix $j$. Then, looking modulo $\varpi^{j}$, and setting
$y_{ij}=\varpi^{i}x_{ij}$, one obtains the equations (for all $0\le i\le j-1$
and all $1\le l\le j$)
\[
\sum_{i=0}^{j-1}{k-i \choose j+l(q-1)}\cdot y_{ij}\equiv0\mod\varpi^{j}.
\]

By Corollary \ref{cor:nonsingular matrix large valuation}, with $m=j$,
we see that the matrix of coefficients here is nonsingular modulo
$p$, hence also invertible modulo $\varpi^{j}$, and it follows that
$y_{ij}\in\varpi^{j}\mathcal{O}_{C}$ for all $0\le i\le j-1$. But
this precisely means that
\[
x_{ij}=\varpi^{-i}y_{ij}\in\varpi^{j-i}\mathcal{O}_{C}
\]
as claimed.

Therefore,
\[
\varpi^{i}\cdot\sum_{\lambda\in\mathbb{F}_{q}}c_{k-i,\mu+\varpi^{m}[\lambda]}^{m+1}\cdot[\lambda]^{k-i-j}=\varpi^{i}\cdot\varpi^{n-d}x_{ij}\in\varpi^{n-d+j}\mathcal{O}_{C}
\]
Considering now the formulas for $C_{j,\mu}^{m}$, with $1\le j\le d$,
we get
\[
\sum_{i=j}^{k}{i \choose j}c_{i,\mu}^{m-1}[\lambda]^{i-j}\in\varpi^{n-d}\mathcal{O}_{C}.
\]

This also holds when $j=0$ trivially as a conequence of (\ref{eq:first approximation for divisibility}).

Hence, applying once more Lemma \ref{subsec:Lemma divisibility of polynomial coefficients},
\[
c_{i,\mu}^{m-1}\in\varpi^{n-d}\mathcal{O}_{C}
\]
as claimed. Therefore, in this case, taking $\epsilon=d$ suffices.
\end{proof}

\subsection{The case $0<v_{F}(a)\le e$}

In this subsection, we will prove the following theorem. Since the
case $v_{F}(a)=0$ is covered by \cite[Prop. 4.10]{de2013existence},
it establishes $(i)$ and $(ii)$ in Theorem \ref{thm:Main Theorem},
for that case.
\begin{thm}
\label{subsec:Theorem less than 1} Let $0\le k\le q^{2}/2$. Assume
further that $k=dq+r$ with $0\le r<q-d$. Let $a\in\mathcal{O}_{C}$
be such that $0<v_{F}(a)\le e$. Assume either that $0<v_{F}(a)\le1$
or that $2v_{F}(a)-1\le r$. Then $(k,a)$ is separated.
\end{thm}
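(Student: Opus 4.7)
The plan is to follow the descending-induction strategy of the preceding subsection, adapted to accommodate the fact that $a$ is no longer divisible by $\varpi^{d}$. As in that argument, I write $f = \sum_{m=0}^{M} f_{m}$ with $f_{m} \in S_{N+m}^{0}$, set $f_{m}=0$ for $m>M$, and expand each $f_{m}$ in the basis $(e_{k,i})$ to produce coefficients $c_{i,\mu}^{m} \in \mathcal{O}_{C}$. The hypothesis $(T-a)(f) \in B_{N} + \varpi^{n} ind_{KZ}^{G}\underline{\rho}_{k}^{0}$ translates, via Corollary \ref{cor:definition of coefficients} and formula (\ref{eq:formula for the coefficients}), into the vanishing of $C_{j,\mu}^{m}$ modulo $\varpi^{n}$ for every $1\le m\le M+1$, every $\mu\in I_{m}$, and every $0\le j\le k$. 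I will choose a constant $\epsilon$ depending only on $d$ and $v_{F}(a)$ such that, assuming inductively $f_{m}, f_{m+1} \in \varpi^{n-\epsilon} ind_{KZ}^{G}\underline{\rho}_{k}^{0}$, the same containment holds for $f_{m-1}$.

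To extract information about $c_{i,\mu'}^{m-1}$ (with $\mu'=[\mu]_{m-1}$) I isolate the $\varpi^{j}$-scaled middle sum in (\ref{eq:formula for the coefficients}). Using the inductive estimate to control $T^{-}(f_{m+1})$, together with the crude bound $v_{F}(a\,c_{j,\mu}^{m}) \ge v_{F}(a) + n - \epsilon$, I obtain, for $0\le j\le d$ and each $\lambda\in\mathbb{F}_{q}$,
\[
\sum_{j\le i\le k}{i \choose j} c_{i,\mu'}^{m-1} [\lambda]^{i-j} \in \varpi^{n-\epsilon'-j}\mathcal{O}_{C}
\]
for an intermediate $\epsilon'$ determined by $\epsilon$ and $v_{F}(a)$. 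Lemma \ref{subsec:Lemma divisibility of polynomial coefficients} then produces a preliminary (still too weak) estimate $c_{i,\mu'}^{m-1} \in \varpi^{n-\epsilon''}\mathcal{O}_{C}$, while Lemma \ref{subsec:Lemma sum coeffs divisibility} supplies the finer relations (\ref{eq:coefficients relation modulo (x^q-x)^(d+1)}) among the higher-index coefficients $c_{j+l(q-1),\mu'}^{m-1}$.

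The bootstrap step is where Lemma \ref{subsec:Lemma divisibility of binomial coefficient}, and hence the hypothesis $r<q-d$, plays the decisive role: it yields $p\mid{k-i \choose k-j-l(q-1)}$ for a range of $(i,j,l)$, which furnishes an extra factor of $p$ in selected terms of the $T^{-}$-contributions. Feeding these sharpened bounds back into $C_{j+l(q-1),\mu}^{m}\equiv 0\pmod{\varpi^{n}}$ and inverting the resulting linear system via Corollary \ref{cor:nonsingular modified matrix} upgrades the estimate on the $c_{i,\mu'}^{m-1}$ and closes the induction. The two regimes of the theorem correspond to two different bootstrap depths: for $v_{F}(a)\le 1$ a single improvement absorbs the loss from $a\,c_{j,\mu}^{m}$, while for $1\le v_{F}(a)\le e$ one must iterate the procedure, and the condition $2v_{F}(a)-1\le r$ is exactly the arithmetic input guaranteeing that the accumulated $p$-divisibilities from Lemma \ref{subsec:Lemma divisibility of binomial coefficient} outpace the accumulated $v_{F}(a)$ losses coming from the $a$-term.

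The main obstacle I anticipate is the bookkeeping: one must simultaneously track the $v_{F}(a)$-losses from the $a$-term, the $\varpi$-gains from the $\varpi^{k-i}$ and $\varpi^{j}$ prefactors in (\ref{eq:formula for the coefficients}), and the extra $p$-divisibilities from Lemma \ref{subsec:Lemma divisibility of binomial coefficient}, across several iterations of the bootstrap and across the range $0\le j\le d$, $1\le l\le d$. The delicate point is confirming that the resulting matrix of binomial coefficients is invertible over $\mathcal{O}_{C}$ modulo the appropriate power of $\varpi$; this ultimately rests on the nonsingularity statements of Corollaries \ref{cor:nonsingular modified matrix} and \ref{cor:nonsingular matrix large valuation}, whose applicability must be checked carefully in the low-valuation regime considered here.
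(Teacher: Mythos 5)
Your overall frame (sphere-by-sphere descending induction, translating the hypothesis into congruences on the $C_{j,\mu}^{m}$, then using Lemma \ref{subsec:Lemma divisibility of polynomial coefficients}, Lemma \ref{subsec:Lemma sum coeffs divisibility}, Lemma \ref{subsec:Lemma divisibility of binomial coefficient} and the nonsingularity results) names the right ingredients, but the induction you actually propose to run does not close, and this is not mere bookkeeping. You carry only the uniform hypothesis $f_{m},f_{m+1}\in\varpi^{n-\epsilon}\,ind_{KZ}^{G}\underline{\rho}_{k}^{0}$. In the regime of this theorem $v_{F}(a)$ may be much smaller than $d$, so the term $a\,c_{j,\mu}^{m}$ is not absorbed into $\varpi^{n}\mathcal{O}_{C}$, and a uniform bound on all coefficients of $f_{m},f_{m+1}$ gives, via (\ref{eq:formula for the coefficients}) and Lemma \ref{subsec:Lemma divisibility of polynomial coefficients}, only $c_{i,\mu}^{m-1}\in\varpi^{n-\epsilon-2d}\mathcal{O}_{C}$ or so: the loss per sphere is of size $d$, and no single $\epsilon$ independent of $m$ survives. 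The paper's proof closes the induction precisely by strengthening the inductive statement to the graded system $\mathscr{A}_{m},\mathscr{B}_{m},\mathscr{C}_{m},\mathscr{D}_{m}$: bounds of the shape $c_{j,\mu}^{m}\in\frac{\varpi^{n-j}}{a}\mathcal{O}_{C}$ for low indices, $c_{k-j,\mu}^{m}\in\frac{\varpi^{n-j}}{a}\mathcal{O}_{C}$ for high indices, a $\frac{\varpi^{n}}{a^{2}}$ bound, and, crucially, the relations $\mathscr{C}_{m}$ among coefficients in a fixed class modulo $q-1$ coming from (\ref{eq:coefficients relation modulo (x^q-x)^(d+1)}). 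These graded and relational statements are what make the $T^{+}$-contribution controllable for small $j$, the $T^{-}$-contribution controllable for large $j$, and allow the transfer (via Lemma \ref{lem:nonsingular matrix} and Corollary \ref{cor:nonsingular modified matrix}) from the coefficients $c_{k-j-l(q-1),\mu}^{m-1}$ with $l\ge j+1$ back to $c_{k-j,\mu}^{m-1}$. They cannot be recovered within a single sphere step from a uniform bound, so they must be part of what is carried; your plan omits exactly this strengthening, which is the heart of the argument (Lemma \ref{lem:Main Lemma for v(a) < e case}).

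Separately, your description of the two regimes misses the actual case split. The mechanism you sketch (and the paper's main induction) requires $\max(2v_{F}(a)-1,1)\le r$: Lemma \ref{subsec:Lemma divisibility of binomial coefficient} is applied with $k-2v_{F}(a)<i\le k$, and the inequality $k-j-l(q-1)\ge d+\max(1,2v_{F}(a)-1)-j$ used to control the $T^{+}$-term needs $r\ge1$. The remaining subcase of the theorem, $r=0$ with $0<v_{F}(a)\le1$ (i.e.\ $k=dq$), is handled in the paper by a genuinely different argument, with an inductive hypothesis (\ref{eq:contribution of outer sums}) that tracks sums $\sum_{\lambda\in\kappa_{F}}c_{k,\mu+\varpi^{m}[\lambda]}^{m+1}[\lambda]^{l}$ over the residue field and exploits Kummer-type divisibilities special to $k=dq$, such as $p\mid{dq \choose d+l(q-1)}$, together with the smallness $v_{F}(a)\le1<\frac{d+1}{2}$. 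Your proposal neither covers this subcase nor signals that a separate argument is needed; the dichotomy you describe (one bootstrap for $v_{F}(a)\le1$, iterated bootstrap for $1\le v_{F}(a)\le e$) does not correspond to the actual structure of the proof.
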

We prove the theorem by considering two cases.

We shall first prove the case where $\max(2v_{F}(a)-1,1)\le r$, and
then the case $r=0,v_{F}(a)\le1$.

Unfortunately, we have not been able to provide a proof for the case
$0\le r<2v_{F}(a)-1$.
\begin{proof}
Let $f\in ind_{KZ}^{G}\underline{\rho}_{k}$ be such that $(T-a)f\in B_{N}+\varpi^{n}ind_{KZ}^{G}\underline{\rho}_{k}^{0}$.
We may assume that $f=\sum_{m=0}^{M}f_{m}$ where $f_{m}\in S_{N+m}$,
and denote $f_{m}=0$ for $m>M$. Looking at $S_{N+m}$, we have the
equations
\begin{equation}
T^{-}(f_{m+1})+T^{+}(f_{m-1})-af_{m}\in\varpi^{n}ind_{KZ}^{G}\underline{\rho}_{\underline{k}}^{0}\quad1\le m\le M+1\label{eq:Sphere equations}
\end{equation}
Our proof will be based on descending induction on $m$, showing that
if $f_{m}$, $f_{m+1}$ are highly divisible, so must be $f_{m-1}$.

We will initially obtain some bound for the valuation of $f_{m-1}$
using $f_{m}$ and $f_{m+1}$, and then we will use that initial bound
to bootstrap and obtain better bounds on the valuation of $f_{m}$,$f_{m+1}$
and, in turn, $f_{m-1}$.

Moreover, we may assume that $f_{m}\in S_{N+m}^{0}$, using $G$-equivariance.

We refer the reader to the definition of the coefficients $c_{\underline{j},\mu}^{m}$
in Corollary \ref{cor:definition of coefficients}, and to formula
(\ref{eq:formula for the coefficients}).

As under our assumptions $\left|S^{+}\right|=1$, we will usually
replace the multi-index notation $\underline{j}$ by $j=j_{\sigma}$.

The idea of this part of the proof is as follows - the contribution
from the $T^{+}$ part (the inner vertex) has high valuation when
$j$ is large, while the contribution from the $T^{-}$ part (the
outer vertices) has high valuation when $j$ is small.

Let us introduce the statements $\mathscr{A}_{m},\mathscr{B}_{m},\mathscr{C}_{m},\mathscr{D}_{m}$
for the rest of the proof.

The assumptions $\mathscr{A}_{m}$ are made to ensure that for small
values of $j$, the contribution from $T^{+}$ is of high enough valuation,
hence we can infer something about its preimage (by the previous Lemmata).
These give us the initial bound for the valuation of $f_{m-1}$.

In the bootstrapping part, this bound shows that for large values
of $j$, the main contribution comes from $T^{-}$, whence we must
use $\mathscr{B}_{m}$ in order to obtain better bounds on the valuation
of $f_{m}$. These bounds for large values of $j$ can improve our
bounds for small values of $j$ by using the assumption $\mathscr{C}_{m}$,
which is a linear relation involving one small value of $j$, while
all the others are large.

Finally, this is used to obtain a better bound on the valuation of
$f_{m-1}$, establishing the theorem.
\[
\mathscr{A}_{m}:\quad c_{j,\mu}^{m}\in\frac{\varpi^{n-j}}{a}\cdot\mathcal{O}_{C}\quad\forall0\le j\le d,\quad c_{i,\mu}^{m}\in\frac{\varpi^{n-d}}{a}\cdot\mathcal{O}_{C}\quad\forall0\le i\le k\quad\forall\mu\in I_{m}
\]
\[
\mathscr{B}_{m}:\quad c_{k-j,\mu}^{m}\in\frac{\varpi^{n-j}}{a}\cdot\mathcal{O}_{C}\quad\forall0\le j\le d,\quad c_{i,\mu}^{m}\in\frac{\varpi^{n-d}}{a}\cdot\mathcal{O}_{C}\  \forall0\le i\le k \ \forall\mu\in I_{m}
\]
\[
\mathscr{C}_{m}:\quad\sum_{s=j}^{\left\lfloor \frac{k-i}{q-1}\right\rfloor }{s \choose j}\cdot c_{i+s(q-1),\mu}^{m}\in\frac{\varpi^{n-j}}{a}\cdot\mathcal{O}_{C}\quad\forall j+1\le i\le j+q-1,\quad\forall0\le j\le d
\]
\[
\mathscr{D}_{m}:\quad c_{i,\mu}^{m}\in\frac{\varpi^{n}}{a^{2}}\cdot\mathcal{O}_{C}\quad\forall0\le i\le k
\]
Assume, by descending induction on $m$, that $\mathscr{A}_{m},\mathscr{B}_{m},\mathscr{B}_{m+1},\mathscr{C}_{m}$
hold for all $\mu,\lambda$.

Note that, as $f_{M+1}=f_{M+2}=0$, they trivially hold for $m=M+1$.
We will prove that $\mathscr{A}_{m-1},\mathscr{B}_{m-1},\mathscr{B}_{m},\mathscr{C}_{m-1}$
hold.

For this, we make use of the subsequent Lemma \ref{lem:Main Lemma for v(a) < e case}.

We assume $\mathscr{A}_{m},\mathscr{B}_{m},\mathscr{B}_{m+1},\mathscr{C}_{m}$,
hence by Lemma \ref{lem:Main Lemma for v(a) < e case}, we know that

$\mathscr{A}_{m-1},\mathscr{C}_{m-1},\mathscr{D}_{m}$ also hold.

It remains to show that $\mathscr{B}_{m-1}$ holds. In fact, we need
only to show that $c_{k-j,\mu}^{m-1}\in\frac{\varpi^{n-j}}{a}\mathcal{O}_{C}$
for all $0\le j\le d$.

Note that since $\mathscr{B}_{m}$ holds, by applying Lemma \ref{lem:Main Lemma for v(a) < e case}
to $m-1$, we see that $\mathscr{A}_{m-2},\mathscr{C}_{m-2}$ hold
as well, and so does $\mathscr{D}_{m-1}$.

Next, we see from $\mathscr{D}_{m-1}$ that we have $c_{k-j,\mu}^{m-1}\in\frac{\varpi^{n}}{a^{2}}\mathcal{O}_{C}\subseteq\frac{\varpi^{n-j}}{a}\mathcal{O}_{C}$
for all $v_{F}(a)\le j\le d$, which we get ``for free''. Therefore,
it remains to show that $c_{k-j,\mu}^{m-1}\in\frac{\varpi^{n-j}}{a}\cdot\mathcal{O}_{C}$
for all $0\le j<\min(v_{F}(a),d)$.

Fix some $0\le j<\min(v_{F}(a),d)$.

Now, since by Lemma \ref{subsec:Lemma divisibility of binomial coefficient},
$p\mid{i \choose k-j-l(q-1)}$ for all $k-2v_{F}(a)<i\le k$ and all
$j+1\le l\le d$ (here we use $2v_{F}(a)-1\le r<q-d$), and by $\mathscr{B}_{m}$,
$c_{i,\mu}^{m}\in\frac{\varpi^{n-k+i}}{a}\mathcal{O}_{C}$ for all
$k-2v_{F}(a)<i\le k$, we get (as $\varpi^{e}\mid p$) that
\begin{equation}
\varpi^{k-i}\cdot{i \choose k-j-l(q-1)}\cdot c_{i,\mu}^{m}\in\varpi^{k-i+e}\cdot\frac{\varpi^{n-k+i}}{a}\cdot\mathcal{O}_{C}=\frac{\varpi^{n+e}}{a}\cdot\mathcal{O}_{C}\subseteq\varpi^{n}\mathcal{O}_{C}\label{eq:large indices part of T- is divisible}
\end{equation}
where the last inclusion follows from $v_{F}(a)\le e$.

Furthermore, since we have shown $\mathscr{D}_{m}$, we know that
$c_{i,\mu}^{m}\in\frac{\varpi^{n}}{a^{2}}\mathcal{O}_{C}=\varpi^{n-2v_{F}(a)}\mathcal{O}_{C}$
for all $0\le i\le k$, hence for $i\le k-2v_{F}(a)$, we get
\begin{equation}
\varpi^{k-i}\cdot c_{i,\mu}^{m}\in\varpi^{2v_{F}(a)}\cdot\varpi^{n-2v_{F}(a)}\mathcal{O}_{C}=\varpi^{n}\mathcal{O}_{C}.\label{eq:small indices part of T- is divisible}
\end{equation}

At this point we make use of the hypothesis (\ref{eq:Sphere equations}).

It then follows from equation (\ref{eq:formula for the coefficients})
for $C_{k-j-l(q-1)}^{m-1}$, and equations (\ref{eq:large indices part of T- is divisible}),
(\ref{eq:small indices part of T- is divisible}) that for all $\mu\in I_{m-1}$
\[
\varpi^{k-j-l(q-1)}\cdot\sum_{i=k-j-l(q-1)}^{k}{i \choose k-j-l(q-1)}\cdot c_{i,[\mu]_{m-2}}^{m-2}[-\lambda_{\mu}]^{i-k+j+l(q-1)}-
\]
\[
-a\cdot c_{k-j-l(q-1),\mu}^{m-1}\in\varpi^{n}\mathcal{O}_{C}.
\]
But recall that $l\le d$, so that
\[
k-j-l(q-1)=(d-l)\cdot(q-1)+(r+d-j)\ge r+d-j\ge d+\max(1,2v_{F}(a)-1)-j
\]
where in the last inequality we use our assumption that $r\ge1$.

Since we have established $\mathscr{A}_{m-2}$, we know that $c_{i,\mu}^{m-2}\in\frac{\varpi^{n-d}}{a}\cdot\mathcal{O}_{C}$,
hence $\varpi^{k-j-l(q-1)}\cdot c_{i,\mu}^{m-2}\in\frac{\varpi^{n+\max(1,2v_{F}(a)-1)-j}}{a}\cdot\mathcal{O}_{C}\subseteq\varpi^{n-j}\mathcal{O}_{C}$.

Therefore, we obtain that $a\cdot c_{k-j-l(q-1),\mu}^{m-1}\in\varpi^{n-j}\mathcal{O}_{C}$,
hence
\begin{equation}
c_{k-j-l(q-1),\mu}^{m-1}\in\frac{\varpi^{n-j}}{a}\mathcal{O}_{C}\quad\forall j+1\le l\le d.\label{eq:divisibility of coeffs in multiples of q-1}
\end{equation}

We shall now use $\mathscr{C}_{m-1}$ to infer from the divisibility
of these coefficients, the divisibility of the coefficient $c_{k-j,\mu}^{m-1}$
by $\frac{\varpi^{n-j}}{a}$ as desired. This shall be done as follows.

Let $i$ be the unique integer satisfying $j+1\le i\le j+q-1$ such
that $i\equiv k-j\mod(q-1)$, and let $l_{0}=\left\lfloor \frac{k-i}{q-1}\right\rfloor $,
so that $k-j=i+l_{0}(q-1)$. (Recall that $k-j+q-1\ge k-d+q-1>k)$.

If $i<q$, we let $A\in\mathbb{Z}^{(j+1)\times(j+1)}$ be the matrix
with entries $A_{tl}={l_{0}-l \choose t}_{t,l=0}^{j}$.

If $i\ge q$, we let $A$ be the matrix with entries
\[
A_{tl}=\begin{cases}
{l_{0}-l \choose t} & i-q<t\le j\\
{l_{0}-l+1 \choose t} & 0\le t\le i-q
\end{cases}\quad\forall l\in\{0,1,2,\ldots,j\}
\]

In each of the cases, $A\in GL_{j+1}(\mathbb{Z})$, either by Lemma
\ref{lem:nonsingular matrix} or by Corollary \ref{cor:nonsingular modified matrix}.

Therefore, there exists a non-trivial $\mathbb{Z}$-linear combination
of its rows, some $\alpha_{t}\in\mathbb{Z}$ such that for all $0\le l\le j$
\begin{equation}
\sum_{t=0}^{j}\alpha_{t}A_{tl}=\delta_{l,0}.\label{eq:linear combination of Cm}
\end{equation}

For $t>i-q$, substituting in $\mathscr{C}_{m-1}$ the value $t$
for $j$, we obtain for all $\mu\in I_{m-1}$
\[
\Xi_{t}:=\sum_{s=t}^{l_{0}}{s \choose t}\cdot c_{i+s(q-1),\mu}^{m-1}\in\frac{\varpi^{n-t}}{a}\cdot\mathcal{O}_{C}\subseteq\frac{\varpi^{n-j}}{a}\cdot\mathcal{O}_{C}.
\]

Note that indeed $t+1\le j+1\le i\le t+q-1$, as required.

For $0\le t\le i-q$, substituting in $\mathscr{C}_{m-1}$ the value
$t$ for $j$ and the value $i-(q-1)$ for $i$, we obtain for all
$\mu\in I_{m-1}$
\begin{equation}
\begin{split}
&\Xi_{t}:=\sum_{s=t-1}^{l_{0}}{s+1 \choose t}\cdot c_{i+s(q-1),\mu}^{m-1}= \\
&=\sum_{s=t}^{l_{0}}{s \choose t}\cdot c_{i+(s-1)(q-1),\mu}^{m-1}\in\frac{\varpi^{n-t}}{a}\cdot\mathcal{O}_{C}\subseteq\frac{\varpi^{n-j}}{a}\cdot\mathcal{O}_{C}.
\end{split}
\end{equation}

Note that indeed $t+1\le i-(q-1)\le j\le d-1\le q-1\le t+q-1$, as
required.

Considering the linear combination $\sum_{t=0}^{j}\alpha_{t}\Xi_{t}$,
we see that
\[
\sum_{t=0}^{i-q}\sum_{s=t-1}^{l_{0}}\alpha_{t}{s+1 \choose t}\cdot c_{i+s(q-1),\mu}^{m-1}+\sum_{t=i-q+1}^{j}\sum_{s=t}^{l_{0}}\alpha_{t}{s \choose t}\cdot c_{i+s(q-1),\mu}^{m-1}=
\]
\[
=\sum_{t=0}^{j}\alpha_{t}\Xi_{t}\in\frac{\varpi^{n-j}}{a}\cdot\mathcal{O}_{C}
\]
which, reindexing, is the same as
\begin{equation}
\begin{split}
&\sum_{l=0}^{l_{0}+1}\left(\sum_{t=0}^{i-q}\alpha_{t}{l_{0}-l+1 \choose t}\right)\cdot c_{k-j-l(q-1),\mu}^{m-1}+\\
&+\sum_{l=0}^{l_{0}}\left(\sum_{t=i-q+1}^{j}\alpha_{t}{l_{0}-l \choose t}\right)\cdot c_{k-j-l(q-1),\mu}^{m-1}=\\
&=\sum_{l=0}^{l_{0}+1}\left(\sum_{t=0}^{i-q}\alpha_{t}{l_{0}-l+1 \choose t}\right)\cdot c_{i+l_{0}(q-1)-l(q-1),\mu}^{m-1}+\\
&+\sum_{l=0}^{l_{0}}\left(\sum_{t=i-q+1}^{j}\alpha_{t}{l_{0}-l \choose t}\right)\cdot c_{i+l_{0}(q-1)-l(q-1),\mu}^{m-1}
\label{eq:expanded linear combination of Cm}
\end{split}
\end{equation}
which lies in $\frac{\varpi^{n-j}}{a}\cdot\mathcal{O}_{C}$.

Since we assumed that $r<q-d$ we have
\[
k-j-(d+1)(q-1)\le k-(d+1)(q-1)=d\cdot q+r-(dq+q-d-1)=
\]
\[
=r-(q-d-1)\le0<1\le j+1\le i=k-j-l_{0}(q-1)
\]
showing that $l_{0}\le d$, hence for every $j+1\le l\le l_{0}$,
by (\ref{eq:divisibility of coeffs in multiples of q-1}) we have
$c_{k-j-l(q-1),\mu}^{m-1}\in\frac{\varpi^{n-j}}{a}\mathcal{O}_{C}$,
so that (\ref{eq:expanded linear combination of Cm}) yields
\[
\sum_{l=0}^{j}\left(\sum_{t=0}^{j}\alpha_{t}A_{tl}\right)\cdot c_{k-j-l(q-1),\mu}^{m-1}=
\]
\[
=\sum_{l=0}^{j}\left(\sum_{t=0}^{i-q}\alpha_{t}{l_{0}-l+1 \choose t}+\sum_{t=i-q+1}^{j}\alpha_{t}{l_{0}-l \choose t}\right)\cdot c_{k-j-l(q-1),\mu}^{m-1}\in\frac{\varpi^{n-j}}{a}\cdot\mathcal{O}_{C}.
\]
Now we apply (\ref{eq:linear combination of Cm}) to see that this
is no more than $c_{k-j,\mu}^{m-1}\in\frac{\varpi^{n-j}}{a}\cdot\mathcal{O}_{C}$,
as wanted. This establishes $\mathscr{B}_{m-1}$.

At this point, we have established $\mathscr{A}_{m-1},\mathscr{B}_{m-1},\mathscr{B}_{m},\mathscr{C}_{m-1}$
from 

$\mathscr{A}_{m},\mathscr{B}_{m},\mathscr{B}_{m+1},\mathscr{C}_{m}$.
By descending induction, this shows that 

$\mathscr{A}_{m},\mathscr{B}_{m},\mathscr{B}_{m+1},\mathscr{C}_{m}$
hold for all $m$.

In particular, considering for example $\mathscr{A}_{m}$, we see
that for any $m$, $c_{i,\mu}^{m}\in\frac{\varpi^{n-d}}{a}\cdot\mathcal{O}_{C}$
for all $0\le i\le k$, showing that $f_{m}\in\varpi^{n-(d+v_{F}(a))}\cdot\mathcal{O}_{C}$
for all $m$.

Thus, we have shown that if $(T-a)f\in B_{N}+\varpi^{n}ind_{KZ}^{G}\underline{\rho}_{k}^{0}$,
then $f\in B_{N-1}+\varpi^{n-(d+v_{F}(a))}\cdot\mathcal{O}_{C}$.

Therefore, in the case $\max(2v_{F}(a)-1,1)\le r$, taking $\epsilon=d+v_{F}(a)$
suffices in order to show that $(k,a)$ is separated.
\end{proof}
\begin{lem}
\label{lem:Main Lemma for v(a) < e case}Assume that for some $m$,
$\mathscr{A}_{m},\mathscr{B}_{m+1},\mathscr{C}_{m}$ hold. 

Then $\mathscr{A}_{m-1},\mathscr{C}_{m-1},\mathscr{D}_{m}$ hold as well.
\end{lem}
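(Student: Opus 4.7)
My plan is to read the three conclusions off the single sphere equation $C_{j,\mu'}^m\in\varpi^n\mathcal{O}_C$ given by Corollary \ref{cor:definition of coefficients}. That equation decomposes as a sum of three contributions: a $T^-$ term, in which $c^{m+1}$ enters with the weight $\varpi^{k-i}$; a $T^+$ term, in which $c^{m-1}$ enters with the weight $\varpi^j$; and the diagonal term $-a\,c^m_{j,\mu'}$. The hypotheses $\mathscr{A}_m$, $\mathscr{B}_{m+1}$, $\mathscr{C}_m$ control two of the three summands, so one can invert to bound the third; doing this for $c^{m-1}$ yields $\mathscr{A}_{m-1}$ and $\mathscr{C}_{m-1}$, and then revisiting the equation with the newly available control on $c^{m-1}$ yields $\mathscr{D}_m$.

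First I would observe that $\mathscr{B}_{m+1}$ forces the $T^-$ contribution into $\frac{\varpi^n}{a}\mathcal{O}_C$: on the ``outer'' range $k-d\le i\le k$ the factor $\varpi^{k-i}$ exactly compensates the $j$-dependent clause of $\mathscr{B}_{m+1}$, while on the complementary range $0\le i<k-d$ the uniform clause combined with $\varpi^{k-i}$ leaves even an additional factor of $\varpi^{k-i-d}$. A parallel analysis using $\mathscr{A}_m$ (and, for indices of $c^m$ outside $[0,d]$, the linear relations of $\mathscr{C}_m$) bounds $a\,c^m_{j,\mu'}$ by $\varpi^{n-\min(j,d)}\mathcal{O}_C$. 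Transposing the sphere equation and dividing by $\varpi^j$, I would obtain, for each fixed $\mu\in I_{m-1}$ and each $\lambda\in\mathbb{F}_q$, an estimate of the shape
\[
\sum_{i\ge j}\binom{i}{j}c^{m-1}_{i,\mu}(-[\lambda])^{i-j}\in\varpi^{\,n-j-\max(v_F(a),j)}\mathcal{O}_C.
\]
Feeding this into Lemma \ref{subsec:Lemma sum coeffs divisibility} successively with depth parameter $d'=0,1,\dots,d$ extracts $c^{m-1}_{d',\mu}\in\frac{\varpi^{n-d'}}{a}\mathcal{O}_C$ (the first clause of $\mathscr{A}_{m-1}$) together with the linear combinations of $\mathscr{C}_{m-1}$; applying Lemma \ref{subsec:Lemma divisibility of polynomial coefficients} to the weakest simultaneous bound then gives the uniform clause of $\mathscr{A}_{m-1}$.

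For $\mathscr{D}_m$, I would reinject $\mathscr{A}_{m-1}$ into the same sphere equation. The $T^+$ sum now also lies in $\frac{\varpi^n}{a}\mathcal{O}_C$: the prefactor $\varpi^j$ exactly absorbs the $j$-graded loss of the first clause of $\mathscr{A}_{m-1}$ in the indices $i\le d$, and combines with the uniform clause in the indices $i>d$. Added to the already established bound on $T^-$, this forces $a\,c^m_{j,\mu'}\in\frac{\varpi^n}{a}\mathcal{O}_C$, which is $c^m_{j,\mu'}\in\frac{\varpi^n}{a^2}\mathcal{O}_C$ — precisely $\mathscr{D}_m$.

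The main obstacle lies in the second step: as $j$ grows past $v_F(a)$, the bound on the $T^+$ sum deteriorates from $\frac{\varpi^{n-j}}{a}$ to $\varpi^{n-2j}$, and a naive application of Lemma \ref{subsec:Lemma sum coeffs divisibility} with $d'=d$ yields only $\varpi^{n-2d}$ on the leading coefficient rather than the desired $\frac{\varpi^{n-d}}{a}$. This is where the hypotheses of Theorem \ref{subsec:Theorem less than 1} (either $v_F(a)\le1$, or $2v_F(a)-1\le r$) are crucial: they constrain the interplay between $v_F(a)$, $d$ and $r$ so that either the degradation never occurs or the indices at which it would matter are already controlled by $\mathscr{C}_m$, whose high-index linear relations tighten the bound on $a\,c^m_j$ used in the $T^+$ inversion. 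The interlocking of $\mathscr{A}_m$, $\mathscr{C}_m$, and the $p$-divisibility results (Lemma \ref{subsec:Lemma divisibility of binomial coefficient}) on binomial coefficients is what ultimately closes the valuation accounting.
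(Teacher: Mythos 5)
Your opening moves match the paper's: the sphere equation from Corollary \ref{cor:definition of coefficients}, the observation that $\mathscr{B}_{m+1}$ places the entire $T^-$ contribution in $\frac{\varpi^{n}}{a}\mathcal{O}_{C}$, and the resulting initial estimate $\sum_{i\ge j}\binom{i}{j}c^{m-1}_{i,\mu}[\lambda]^{i-j}\in\varpi^{\,n-j-\max(j,v_F(a))}\mathcal{O}_{C}$ are exactly the paper's first steps. The gap is in what comes next. You propose to extract $\mathscr{A}_{m-1}$ and $\mathscr{C}_{m-1}$ directly from this estimate and only afterwards reinject them to obtain $\mathscr{D}_{m}$; as you yourself concede, for $j>v_F(a)$ the estimate is only $\varpi^{n-2j}$, so this route yields $c^{m-1}_{j,\mu}\in\varpi^{n-2j}\mathcal{O}_{C}$ rather than the required $\frac{\varpi^{n-j}}{a}\mathcal{O}_{C}$, and your proposed repair --- invoking the hypotheses $v_F(a)\le 1$ or $2v_F(a)-1\le r$ together with Lemma \ref{subsec:Lemma divisibility of binomial coefficient} --- points to the wrong ingredients: those are used later, in the proof of Theorem \ref{subsec:Theorem less than 1}, to establish $\mathscr{B}_{m-1}$, and play no role in this lemma.

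The missing idea is that the order of deduction must be reversed: $\mathscr{D}_{m}$ is proved \emph{before} $\mathscr{A}_{m-1}$, and it comes from the large-index sphere equations, not from $\mathscr{A}_{m-1}$. Concretely, in the nontrivial case $v_F(a)<d$ (when $v_F(a)\ge d$ everything is immediate), the initial estimates for $0\le j\le d$ and Lemma \ref{subsec:Lemma divisibility of polynomial coefficients} give only the crude uniform bound $c^{m-1}_{i,\mu}\in\varpi^{n-2d}\mathcal{O}_{C}$. One then looks at $C^{m}_{j,\mu}$ for $j\ge 2d$: there the prefactor $\varpi^{j}$ fully absorbs the loss, the $T^+$ term lands in $\varpi^{n}\mathcal{O}_{C}$, and together with the $T^-$ bound this forces $a\,c^{m}_{j,\mu}\in\frac{\varpi^{n}}{a}\mathcal{O}_{C}$ for all $j\ge 2d$. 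Since $k\le q^{2}/2$ gives $q\ge 2d$, every index $j+l(q-1)$ with $1\le j\le 2d-1$ and $l\ge 1$ is at least $2d$, so the $j=0$ relations of $\mathscr{C}_{m}$ transfer this bound down to all indices, which is $\mathscr{D}_{m}$. Only then, returning to the equations for $1\le j\le d$ with the improved diagonal bound $a\,c^{m}_{j,\mu}\in\frac{\varpi^{n}}{a}\mathcal{O}_{C}$, does one get $\sum_{i\ge j}\binom{i}{j}c^{m-1}_{i,\mu}[\lambda]^{i-j}\in\frac{\varpi^{n-j}}{a}\mathcal{O}_{C}$ for all $j\le d$, from which $\mathscr{A}_{m-1}$ follows (via $\lambda=0$ and Lemma \ref{subsec:Lemma divisibility of polynomial coefficients}) and $\mathscr{C}_{m-1}$ follows (via Lemma \ref{subsec:Lemma sum coeffs divisibility}). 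Without this shift to indices $j\ge 2d$ and descent through $\mathscr{C}_{m}$, your valuation accounting does not close.
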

\begin{proof}
From (\ref{eq:Sphere equations}) and (\ref{eq:formula for the coefficients})
we see that for any $0\le j\le d$
\begin{equation}
\begin{split}
C_{j,\mu}^{m}=\sum_{i=j}^{k}\varpi^{k-i}{i \choose j}\sum_{\lambda\in\kappa_{F}}c_{i,\mu+\varpi^{m}[\lambda]}^{m+1}[\lambda]^{i-j}+\\
+\varpi^{j}\sum_{i=j}^{k}c_{i,[\mu]_{m-1}}^{m-1}{i \choose j}(-\lambda_{\mu})^{i-j}-ac_{j,\mu}^{m}\in\varpi^{n}\mathcal{O}_{C}\label{eq:valuation of spherecoeffs}
\end{split}
\end{equation}
where $\lambda_{\mu}=\frac{\mu-[\mu]_{m-1}}{\varpi^{m-1}}$.

By the hypothesis $\mathscr{B}_{m+1}$, for any $k-d<i\le k$ (and
any $\mu$), we have $c_{i,\mu}^{m+1}\in\frac{\varpi^{n-k+i}}{a}\cdot\mathcal{O}_{C}$,
hence $\varpi^{k-i}\cdot c_{i,\mu}^{m+1}\in\frac{\varpi^{n}}{a}\cdot\mathcal{O}_{C}$.

Also, for any $0\le i\le k-d$, by $\mathscr{B}_{m+1}$, we have $c_{i,\mu}^{m+1}\in\frac{\varpi^{n-d}}{a}\cdot\mathcal{O}_{C}$,
hence $\varpi^{k-i}\cdot c_{i,\mu}^{m+1}\in\frac{\varpi^{d}\cdot\varpi^{n-d}}{a}\cdot\mathcal{O}_{C}=\frac{\varpi^{n}}{a}\cdot\mathcal{O}_{C}$.

We conclude that for any $0\le i\le k$, one has
\begin{equation}
\varpi^{k-i}\cdot c_{i,\mu}^{m+1}\in\frac{\varpi^{n}}{a}\cdot\mathcal{O}_{C}.\label{eq:valuation of m+1 coeffs}
\end{equation}

This implies that the first sum in (\ref{eq:valuation of spherecoeffs})
lies in $\frac{\varpi^{n}}{a}\cdot\mathcal{O}_{C}$, hence
\begin{equation}
\varpi^{j}\sum_{i=j}^{k}c_{i,[\mu]_{m-1}}^{m-1}{i \choose j}(-\lambda_{\mu})^{i-j}-ac_{j,\mu}^{m}\in\frac{\varpi^{n}}{a}\mathcal{O}_{C}\label{eq:valuation of spherecoeffs-1}
\end{equation}

Furthermore, for any $0\le j\le d$, by $\mathscr{A}_{m}$, we know
that $c_{j,\mu}^{m}\in\frac{\varpi^{n-j}}{a}\cdot\mathcal{O}_{C}$,
hence
\begin{equation}
ac_{j,\mu}^{m}\in\varpi^{n-j}\mathcal{O}_{C}\label{eq:valuation of m coeffs}
\end{equation}

$\bullet$ If $v_{F}(a)\le j$, we see that $\varpi^{n}/a\in\varpi^{n-j}\mathcal{O}_{C}$,
so we get from (\ref{eq:valuation of m+1 coeffs}), (\ref{eq:valuation of m coeffs}),
and (\ref{eq:valuation of spherecoeffs}) that
\begin{equation}
\varpi^{j}\sum_{i=j}^{k}{i \choose j}c_{i,\mu}^{m-1}[\lambda]^{i-j}\in\varpi^{n-j}\mathcal{O}_{C}\Rightarrow\sum_{i=j}^{k}{i \choose j}c_{i,\mu}^{m-1}[\lambda]^{i-j}\in\varpi^{n-2j}\mathcal{O}_{C}\label{eq:initial estimate on first coeffs}
\end{equation}
for all $v_{F}(a)\le j\le d$, for all $\mu\in I_{m-1}$ and for all
$\lambda\in\kappa_{F}$.

$\bullet$ If $j\le v_{F}(a)$, we see that $\varpi^{n-j}\in\frac{\varpi^{n}}{a}\cdot\mathcal{O}_{C}$,
so we get from (\ref{eq:valuation of m+1 coeffs}), (\ref{eq:valuation of m coeffs}),
and (\ref{eq:valuation of spherecoeffs}) that
\begin{equation}
\varpi^{j}\sum_{i=j}^{k}{i \choose j}c_{i,\mu}^{m-1}[\lambda]^{i-j}\in\frac{\varpi^{n}}{a}\cdot\mathcal{O}_{C}\Rightarrow\sum_{i=j}^{k}{i \choose j}c_{i,\mu}^{m-1}[\lambda]^{i-j}\in\frac{\varpi^{n-j}}{a}\cdot\mathcal{O}_{C}.\label{eq:initial estimate on first coeffs-1}
\end{equation}

In particular, by Lemma \ref{subsec:Lemma divisibility of polynomial coefficients},
we see that if $v_{F}(a)\le d$, then $c_{i,\mu}^{m-1}\in\varpi^{n-2d}\mathcal{O}_{C}$
for all $0\le i\le k$, and if $v_{F}(a)\ge d$, then $c_{i,\mu}^{m-1}\in\frac{\varpi^{n-d}}{a}\cdot\mathcal{O}_{C}$
for all $0\le i\le k$.

Substituting $\lambda=0$ in (\ref{eq:initial estimate on first coeffs-1})
we get $c_{j,\mu}^{m-1}\in\frac{\varpi^{n-j}}{a}\mathcal{O}_{C}$.

Therefore, if $v_{F}(a)\ge d$, we have already established $\mathscr{A}_{m-1}$.
In this case, since $\frac{\varpi^{n-d}}{a}\in\frac{\varpi^{n}}{a^{2}}\cdot\mathcal{O}_{C}$,
$\mathscr{D}_{m}$ trivially holds.

If $v_{F}(a)<d$, we consider the coefficients $C_{2d,\mu}^{m},C_{2d+1,\mu}^{m},\ldots,C_{k,\mu}^{m}$.
By (\ref{eq:valuation of spherecoeffs}) and (\ref{eq:valuation of m+1 coeffs}),
using the fact that $c_{i,\mu}^{m-1}\in\varpi^{n-2d}\mathcal{O}_{C}$
for all $i$, we get that $ac_{j,\mu}^{m}\in\frac{\varpi^{n}}{a}\mathcal{O}_{C}$
for all $j\ge2d$.

In particular, since, by assumption, $q\ge2k/q\ge2d$, we get that
for any $1\le j\le2d-1$ and any $1\le l$,
\[
j+l(q-1)\ge1+(q-1)=q\ge2d
\]
hence $c_{j+l(q-1),\mu}^{m}\in\frac{\varpi^{n}}{a^{2}}\mathcal{O}_{C}$.

By the assumption $\mathscr{C}_{m}$ (substituting $j$ for $i$ and
$0$ for $j$), it follows also that $c_{j,\mu}^{m}\in\frac{\varpi^{n}}{a^{2}}\mathcal{O}_{C}$.
Therefore $ac_{j,\mu}^{m}\in\frac{\varpi^{n}}{a}\mathcal{O}_{C}$
for all $1\le j\le2d-1$, hence for all $0\le j\le k$, establishing
$\mathscr{D}_{m}$. Note that the case $j=0$ is given by $\mathscr{A}_{m}$.

We may now consider once more the equations for $C_{1,\mu}^{m},\ldots,C_{d,\mu}^{m}$,
and get from (\ref{eq:valuation of m+1 coeffs}), (\ref{eq:valuation of spherecoeffs})
and $\mathscr{D}_{m}$ that $\forall1\le j\le d$
\[
\varpi^{j}\sum_{i=j}^{k}{i \choose j}c_{i,\mu}^{m-1}[\lambda]^{i-j}\in\frac{\varpi^{n}}{a}\mathcal{O}_{C}\Rightarrow\sum_{i=j}^{k}{i \choose j}c_{i,\mu}^{m-1}[\lambda]^{i-j}\in\frac{\varpi^{n-j}}{a}\mathcal{O}_{C}.
\]
When $j=0$, this holds by (\ref{eq:initial estimate on first coeffs-1}).
By Lemma \ref{subsec:Lemma divisibility of polynomial coefficients},
it follows that $c_{i,\mu}^{m-1}\in\frac{\varpi^{n-d}}{a}\mathcal{O}_{C}$
for all $i$. Also, it shows that $c_{j,\mu}^{m-1}\in\frac{\varpi^{n-j}}{a}\mathcal{O}_{C}$
for $1\le j\le d$, by substituting $\lambda=0$. Therefore, we have
established $\mathscr{A}_{m-1}$ in this case as well.

Finally, for any $0\le j\le d$, and for any $0\le t\le j$
\[
\sum_{i=t}^{k}{i \choose t}c_{i,\mu}^{m-1}[\lambda]^{i-t}\in\frac{\varpi^{n-t}}{a}\cdot\mathcal{O}_{C}\subseteq\frac{\varpi^{n-j}}{a}\cdot\mathcal{O}_{C}
\]
for all $\lambda\in\kappa_{F}$. Thus, by Lemma \ref{subsec:Lemma sum coeffs divisibility},
substituting $j$ for $d$ and $i$ for $j$, we get $\mathscr{C}_{m-1}$.
\end{proof}
We now consider the case $0<v_{F}(a)\le1$ and $r=0$, using a different
argument.
\begin{thm}
Let $k=dq$, and assume $1\le d<\frac{q}{2}$ (note that this excludes
$q=2$) . Let $a\in\mathcal{O}_{C}$ be such that $0<v_{F}(a)\le1$,
and let $N\in\mathbb{Z}_{>0}$. There exists a constant $\epsilon\in\mathbb{Z}_{\ge0}$
depending only on $N,k,a$ such that for all $n\in\mathbb{Z}_{\ge0}$,
and all $f\in ind_{KZ}^{G}\underline{\rho}_{k}^{0}$:
\[
(T-a)(f)\in B_{N}+\varpi^{n}ind_{KZ}^{G}\underline{\rho}_{k}^{0}\Rightarrow f\in B_{N-1}+\varpi^{n-\epsilon}ind_{KZ}^{G}\underline{\rho}_{k}^{0}
\]
\end{thm}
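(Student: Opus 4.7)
The plan is to mirror the descending-induction-on-spheres skeleton of the proof of Theorem \ref{subsec:Theorem less than 1}. After using $G$-equivariance to reduce to $f \in B^{0}$, write $f = \sum_{m=0}^{M} f_m$ with $f_m \in S_{N+m}^{0}$ and extract the sphere equations
\[
T^{-}(f_{m+1})+T^{+}(f_{m-1})-a f_{m}\in\varpi^{n}\,ind_{KZ}^{G}\underline{\rho}_{k}^{0},\qquad 1 \le m \le M+1,
\]
together with the expansion (\ref{eq:formula for the coefficients}) for $C_{j,\mu}^{m}$. One then performs descending induction on $m$, transferring divisibility of the coefficients $c_{i,\mu}^{m}$ and $c_{i,\mu}^{m+1}$ to $c_{i,\mu}^{m-1}$.

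The point of the present degenerate case is that when $r = 0$ the quantity $k - j - l(q-1) = (d-l)q + (l-j)$ no longer carries the extra $+r$ slack used in Theorem \ref{subsec:Theorem less than 1}, and in particular Lemma \ref{subsec:Lemma divisibility of binomial coefficient} becomes vacuous. I would therefore replace the predicates $\mathscr{A}_m, \mathscr{B}_m, \mathscr{C}_m, \mathscr{D}_m$ by modified versions $\mathscr{A}'_m, \mathscr{B}'_m, \mathscr{C}'_m, \mathscr{D}'_m$ whose divisibility bounds absorb an additional loss of $\varpi^{v_{F}(a)}$ at each sphere. The saving grace is that $k = dq$ is strictly below $q(d+1)$, so the bootstrap via (\ref{eq:valuation of spherecoeffs}) and Lemma \ref{subsec:Lemma divisibility of polynomial coefficients} is particularly sharp: divisibility of the Taylor expansions of the polynomial $\sum c_{i,\mu}^{m-1}x^{i}$ at every Teichm\"uller lift $[\lambda]$, $\lambda \in \mathbb{F}_q$, for $0 \le j \le d$ propagates to every coefficient without any degree obstruction.

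The main obstacle is producing a substitute for the step (\ref{eq:divisibility of coeffs in multiples of q-1}), which in Theorem \ref{subsec:Theorem less than 1} is the only place where the hypothesis $r \ge 1$ enters. I would replace it by a direct inspection of the $T^{-}(f_m)$-contribution to the equation for $C_{k-j-l(q-1),\mu}^{m-1}$, substituting the modified bound $c_{i,\mu}^{m} \in \tfrac{\varpi^{n-d}}{a}\mathcal{O}_{C}$ supplied by $\mathscr{A}'_m$ and using the sharp estimate $\varpi^{k-j-l(q-1)} \ge \varpi^{d-j}$. The assumption $v_{F}(a) \le 1$ is exactly what bounds the cumulative loss by a single uniform constant $\epsilon$ depending only on $N, k, a$, while the strict inequality $d < q/2$ (equivalently $2d \le q-1$) ensures that the Hecke shifts by $j$ and $j + q - 1$ appearing in $\mathscr{C}'_m$ never interfere in the linear algebra provided by Corollary \ref{cor:nonsingular matrix large valuation}. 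Propagating the modified predicates from $m = M+1$ down to $m = 1$ by descending induction and then extracting $c_{i,\mu}^{m} \in \varpi^{n-\epsilon}\mathcal{O}_{C}$ from $\mathscr{A}'_m$ yields the desired inclusion $f \in B_{N-1} + \varpi^{n-\epsilon}\,ind_{KZ}^{G}\underline{\rho}_{k}^{0}$.
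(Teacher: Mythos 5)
There is a genuine gap: your proposal does not contain the idea that actually makes the $r=0$ case work, and the substitutes you offer would not close the induction. First, the quantitative scheme is off. If your modified predicates $\mathscr{A}'_m,\ldots$ really ``absorb an additional loss of $\varpi^{v_F(a)}$ at each sphere,'' the total loss after $M$ spheres is $M\,v_F(a)$, and $M$ is unbounded; the theorem demands an $\epsilon$ depending only on $N,k,a$, so the induction hypotheses must be uniform in $m$ (as they are in the paper, where $\epsilon=d+v_F(a)$), not degrading sphere by sphere. Second, your replacement for (\ref{eq:divisibility of coeffs in multiples of q-1}) is insufficient. In the equation for $C^{m-1}_{k-j-l(q-1),\mu}$ the inner ($T^+$) term now only carries the factor $\varpi^{k-j-l(q-1)}$ with $k-j-l(q-1)=(d-l)q+(l-j)\ge d-j$ (the $+r$ slack is gone), so with $c^{m-2}_{i,\mu}\in\frac{\varpi^{n-d}}{a}\mathcal{O}_C$ you only reach $\frac{\varpi^{n-j}}{a}\mathcal{O}_C$, i.e. you can only conclude $c^{m-1}_{k-j-l(q-1),\mu}\in\frac{\varpi^{n-j}}{a^2}\mathcal{O}_C$, which is strictly weaker than what the bootstrap needs and compounds. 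Worse, the outer ($T^-$) term at $i=k$ comes with factor $\varpi^{k-i}=1$, and when $r=0$ the top coefficient $c^{m}_{k,\mu}$ can only be assumed in $\frac{\varpi^{n}}{a^{2}}\mathcal{O}_C$; no individual-coefficient bound, nor Lemma \ref{subsec:Lemma divisibility of binomial coefficient} (vacuous here, as you note), controls this term to within $\varpi^{n}\mathcal{O}_C$.

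The paper's proof resolves exactly this by changing what is carried through the induction: the hypotheses (\ref{eq:contribution of outer sums}) track not $c^{m+1}_{k,\mu}$ itself but the fiber sums $\sum_{\lambda\in\kappa_F}c^{m+1}_{k,\mu+\varpi^{m}[\lambda]}[\lambda]^{l}$ for $l\in\{0,1,\ldots,d,q-1\}$, which are precisely the combinations entering the $T^-$ contribution, together with partial sums $\sum_{l}c^{m}_{j+l(q-1),\mu}$. Reproducing these at level $m-1$ uses machinery special to $k=dq$ that your outline never invokes: Kummer/Lucas divisibilities such as $p\mid{dq \choose (d-1)q+(q-j)}$, $p\mid{dq \choose d}$, $p\mid{dq \choose d+l(q-1)}$; the character-sum identity $\sum_{\lambda\in\mathbb{F}_q}[\lambda]^{i}\equiv -1$ or $0 \bmod p$; and the final step where one factors out a quantity of the form $a+\frac{1}{a}\varpi^{d}\cdot(\text{integer combination of binomials})$, whose second summand has valuation at least $d+1-v_F(a)>v_F(a)$ because $v_F(a)\le 1<\frac{d+1}{2}$ --- this is where $v_F(a)\le 1$ and $d\ge 1$ actually enter, not as a bound on cumulative losses. (Also, Corollary \ref{cor:nonsingular matrix large valuation} plays no role here; the hypothesis $d<q/2$ is used to guarantee degree separations like $k-j\ge d(q-1)\ge 2d$ and $j+l(q-1)\ge q>2d$.) Without the fiber-sum bookkeeping and these divisibility inputs, the descending induction cannot be closed along the lines you describe.
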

\begin{proof}
We may assume that $f=\sum_{m=0}^{M}f_{m}$ where $f_{m}\in S_{N+m}^{0}$,
and denote $f_{m}=0$ for $m>M$. Looking at $S_{N+m}$ , we have
the equations
\[
T^{-}(f_{m+1})+T^{+}(f_{m-1})-af_{m}\in\varpi^{n}ind_{KZ}^{G}\underline{\rho}_{k}^{0}\quad1\le m\le M+1
\]
Assume, by descending induction on $m$, that the following hold:
\begin{equation}
\begin{split}
&c_{k,\mu}^{m+1}\in\frac{\varpi^{n}}{a^{2}}\mathcal{O}_{C},\quad c_{k-j,\mu}^{m+1}\in\frac{\varpi^{n}}{a}\mathcal{O}_{C}\quad\forall0<j\le d \\
&c_{i,\mu}^{m+1}\in\frac{\varpi^{n-d}}{a}\mathcal{O}_{C}\quad\forall0\le i\le k\quad\forall\mu\in I_{m+1} \\
&c_{0,\mu}^{m}\in\frac{\varpi^{n}}{a}\mathcal{O}_{C},\quad\sum_{l=0}^{\left\lfloor \frac{k-j}{q-1}\right\rfloor }c_{j+l(q-1),\mu}^{m}\in\frac{\varpi^{n}}{a}\mathcal{O}_{C}\quad\forall1\le j\le d \\
&c_{i,\mu}^{m}\in\frac{\varpi^{n-d}}{a}\mathcal{O}_{C}\quad\forall0\le i\le k\quad\forall\mu\in I_{m} \\
&\sum_{\lambda\in\kappa_{F}}c_{k,\mu+\varpi^{m}[\lambda]}^{m+1}[\lambda]^{l}\in\frac{\varpi^{n}}{a}\mathcal{O}_{C},\quad\forall l\in\{0,1,2,\ldots,d,q-1\}\quad\forall\mu\in I_{m}
\label{eq:contribution of outer sums}
\end{split}
\end{equation}

We will show that the same formulas hold for $m-1$, hence establish
that they hold for all $0\le m\le M+1$.

First, for $\mu\in I_{m-1}$ and $\lambda\in\kappa_{F}$, consider
the formula for $C_{0,\mu+\varpi^{m-1}[\lambda]}^{m}$ , see (\ref{eq:formula for the coefficients}).
By (\ref{eq:contribution of outer sums}) with $l=d$, using the fact
that $[\lambda]^{q}=[\lambda]$ for all $\lambda\in\kappa_{F}$, we
know that
\[
\sum_{\lambda'\in\kappa_{F}}c_{k,\mu+\varpi^{m-1}[\lambda]+\varpi^{m}[\lambda']}^{m+1}[\lambda']^{dq}=\sum_{\lambda'\in\kappa_{F}}c_{k,\mu+\varpi^{m-1}[\lambda]+\varpi^{m}[\lambda']}^{m+1}[\lambda']^{d}\in\frac{\varpi^{n}}{a}\mathcal{O}_{C}
\]

which is the first summand in the first sum in (\ref{eq:formula for the coefficients})
with $j=0$.

For $i\le k-d$, since we assume $c_{i,\mu+\varpi^{m-1}[\lambda]+\varpi^{m}[\lambda']}^{m+1}\in\frac{\varpi^{n-d}}{a}\mathcal{O}_{C}$,
we see that
\[
\varpi^{k-i}\cdot\sum_{\lambda'\in\kappa_{F}}c_{i,\mu+\varpi^{m-1}[\lambda]+\varpi^{m}[\lambda']}^{m+1}[\lambda']^{i}\in\varpi^{d}\cdot\frac{\varpi^{n-d}}{a}\cdot\mathcal{O}_{C}=\frac{\varpi^{n}}{a}\cdot\mathcal{O}_{C}
\]
Also, for $k-d<i<k$, since we assume $c_{i,\mu+\varpi^{m-1}[\lambda]+\varpi^{m}[\lambda']}^{m+1}\in\frac{\varpi^{n}}{a}\mathcal{O}_{C}$,
we get
\[
\varpi^{k-i}\cdot\sum_{\lambda'\in\kappa_{F}}c_{i,\mu+\varpi^{m-1}[\lambda]+\varpi^{m}[\lambda']}^{m+1}[\lambda']^{i}\in\frac{\varpi^{n}}{a}\cdot\mathcal{O}_{C}
\]
This shows that the entire first sum in (\ref{eq:formula for the coefficients})
with $j=0$ lies in $\frac{\varpi^{n}}{a}\cdot\mathcal{O}_{C}$. In
addition, we have assumed that $c_{0,\mu+\varpi^{m-1}[\lambda]}^{m}\in\frac{\varpi^{n}}{a}\cdot\mathcal{O}_{C}$.
Therefore
\[
\sum_{i=0}^{k}c_{i,\mu}^{m-1}[\lambda]^{i}\in\frac{\varpi^{n}}{a}\mathcal{O}_{C}
\]

Next, we consider the formulas for $C_{j,\mu+\varpi^{m-1}[\lambda]}^{m}$
with $1\le j\le d$. By (\ref{eq:contribution of outer sums}) with
$l=d-j$ for $j\ne d$ and $l=q-1$ for $j=d$, using the fact that
$[\lambda]^{q}=[\lambda]$ for all $\lambda\in\kappa_{F}$, we know
that
\[
{k \choose j}\sum_{\lambda'\in\kappa_{F}}c_{k,\mu+\varpi^{m-1}[\lambda]+\varpi^{m}[\lambda']}^{m+1}[\lambda']^{dq-j}=
\]
\[
={k \choose j}\sum_{\lambda'\in\kappa_{F}}c_{k,\mu+\varpi^{m-1}[\lambda]+\varpi^{m}[\lambda']}^{m+1}[\lambda']^{l}\in\frac{\varpi^{n}}{a}\mathcal{O}_{C}\subseteq\varpi^{n-d}\mathcal{O}_{C}
\]
which is the first summand in the first sum in (\ref{eq:formula for the coefficients}).

Since for all $i$, we have $c_{i,\mu+\varpi^{m-1}[\lambda]+\varpi^{m}[\lambda']}^{m+1}\in\frac{\varpi^{n-d}}{a}\mathcal{O}_{C}$,
when considering $i<k$ we also have $1\le k-i$, hence
\[
\varpi^{k-i}{i \choose j}\sum_{\lambda'\in\kappa_{F}}c_{i,\mu+\varpi^{m-1}[\lambda]+\varpi^{m}[\lambda']}^{m+1}[\lambda']^{i-j}\in\varpi\cdot\frac{\varpi^{n-d}}{a}\mathcal{O}_{C}\subseteq\varpi^{n-d}\mathcal{O}_{C}
\]
where the last inclusion holds as $v_{F}(a)\le1$. This shows that
the entire first sum in (\ref{eq:formula for the coefficients}) lies
in $\varpi^{n-d}\mathcal{O}_{C}$.

Since we also have $c_{j,\mu+\varpi^{m-1}[\lambda]}^{m}\in\frac{\varpi^{n-d}}{a}\mathcal{O}_{C}$,
by (\ref{eq:formula for the coefficients}) we see that for all $1\le j\le d$
\[
\sum_{i=j}^{k}{i \choose j}c_{i,\mu}^{m-1}[\lambda]^{i-j}\in\varpi^{n-d-j}\mathcal{O}_{C}
\]
Therefore, by lemma \ref{subsec:Lemma divisibility of polynomial coefficients}
we have $c_{i,\mu}^{m-1}\in\varpi^{n-2d}\mathcal{O}_{C}$ for all
$i$.

Let $0<j\le d$. Looking at the formula for $C_{k-j,\mu}^{m}$, using
the fact that $k-j\ge dq-d=d(q-1)\ge2d$ (recall $q\ne2$), we see
that the second sum satisfies
\[
\varpi^{k-j}\sum_{i=k-j}^{k}{i \choose k-j}c_{i,[\mu]_{m-1}}^{m-1}[\lambda_{\mu}]^{i-(k-j)}\in\varpi^{2d}\cdot\varpi^{n-2d}\mathcal{O}_{C}=\varpi^{n}\mathcal{O}_{C}
\]
Also, we deduce from the hypothesis (\ref{eq:contribution of outer sums})
with $l=j$ that
\[
{k \choose k-j}\sum_{\lambda\in\kappa_{F}}c_{k,\mu+\varpi^{m}[\lambda]}^{m+1}[\lambda]^{k-(k-j)}=
\]
\[
={k \choose k-j}\sum_{\lambda\in\kappa_{F}}c_{k,\mu+\varpi^{m}[\lambda]}^{m+1}[\lambda]^{j}\in{k \choose k-j}\cdot\frac{\varpi^{n}}{a}\mathcal{O}_{C}\subseteq\varpi^{n}\mathcal{O}_{C}
\]

since $p\mid{k \choose k-j}={dq \choose (d-1)q+(q-j)}$ by Kummer's
theorem, and $v_{F}(a)\le1$.

For $i<k-d$, since we assume $c_{i,\mu+\varpi^{m}[\lambda]}^{m+1}\in\frac{\varpi^{n-d}}{a}\mathcal{O}_{C}$,
we see that
\[
\varpi^{k-i}{i \choose j}\cdot\sum_{\lambda\in\kappa_{F}}c_{i,\mu+\varpi^{m}[\lambda]}^{m+1}[\lambda]^{i-j}\in\varpi^{d+1}\cdot\frac{\varpi^{n-d}}{a}\cdot\mathcal{O}_{C}=\frac{\varpi^{n+1}}{a}\cdot\mathcal{O}_{C}\subseteq\varpi^{n}\mathcal{O}_{C}
\]
Also, for $k-d\le i<k$, since we assume $c_{i,\mu+\varpi^{m}[\lambda]}^{m+1}\in\frac{\varpi^{n}}{a}\mathcal{O}_{C}$,
and $1\le k-i$, we get
\[
\varpi^{k-i}{i \choose j}\cdot\sum_{\lambda\in\kappa_{F}}c_{i,\mu+\varpi^{m}[\lambda]}^{m+1}[\lambda]^{i-j}\in\varpi\cdot\frac{\varpi^{n}}{a}\cdot\mathcal{O}_{C}=\varpi^{n}\mathcal{O}_{C}
\]
This shows that both sums in (\ref{eq:formula for the coefficients})
lie in $\varpi^{n}\mathcal{O}_{C}$, hence also
\[
a\cdot c_{k-j,\mu}^{m}\in\varpi^{n}\mathcal{O}_{C}
\]

Furthermore, for any $1\le j\le d$, looking at the formulas for 
$$C_{j+(q-1),\mu}^{m},\ldots,C_{j+l(q-1),\mu}^{m},\ldots,$$
as $j+l(q-1)\ge j+q-1\ge q>2d$, by the same reasoning, we deduce
from the hypothesis (\ref{eq:contribution of outer sums}) with $l=d-j$
that
\[
a\cdot c_{j+l(q-1),\mu}^{m}\in\frac{\varpi^{n}}{a}\mathcal{O}_{C}
\]
Since $\sum_{l=0}^{\left\lfloor \frac{k-j}{q-1}\right\rfloor }c_{j+l(q-1),\mu}^{m}\in\frac{\varpi^{n}}{a}\mathcal{O}_{C}$
for $1\le j\le d$, this shows that $a\cdot c_{j,\mu}^{m}\in\frac{\varpi^{n}}{a}\mathcal{O}_{C}$
for all $0\le j\le k$.

We also note that $p\mid{dq \choose d+l(q-1)}$ for all $1\le l<d$,
by Kummer's theorem, therefore showing that
\[
a\cdot c_{d+l(q-1),\mu}^{m}\in\varpi^{n}\mathcal{O}_{C}
\]
Since $\sum_{l=0}^{d}c_{d+l(q-1),\mu}^{m}\in\frac{\varpi^{n}}{a}\mathcal{O}_{C}$,
we deduce that
\begin{equation}
c_{d,\mu}^{m}+c_{dq,\mu}^{m}\in\frac{\varpi^{n}}{a}\mathcal{O}_{C}\label{eq:d and dq}
\end{equation}
Therefore, we have established that
\[
c_{k,\mu}^{m}\in\frac{\varpi^{n}}{a^{2}}\mathcal{O}_{C},\quad c_{k-j,\mu}^{m}\in\frac{\varpi^{n}}{a}\mathcal{O}_{C}\quad\forall0<j\le d,,\quad c_{i,\mu}^{m}\in\frac{\varpi^{n-d}}{a}\mathcal{O}_{C}\quad\forall0\le i\le k
\]
Returning to the formulas for $C_{0,\mu}^{m},C_{1,\mu}^{m},\ldots,C_{d,\mu}^{m}$,
we see that for all $\lambda\in\mathbb{F}_{q}$ one has
\begin{equation}
\begin{split}
&\sum_{i=0}^{k}c_{i,\mu}^{m-1}[\lambda]^{i}\in\frac{\varpi^{n}}{a}\mathcal{O}_{C}, \\
&\sum_{i=1}^{k}ic_{i,\mu}^{m-1}[\lambda]^{i-1}\in\frac{\varpi^{n-1}}{a}\mathcal{O}_{C}, \\
\vdots& \\
&\sum_{i=d}^{k}{i \choose d}c_{i,\mu}^{m-1}[\lambda]^{i-d}\in\frac{\varpi^{n-d}}{a}\mathcal{O}_{C}
\end{split}
\end{equation}
Therefore, by Lemma \ref{subsec:Lemma divisibility of polynomial coefficients}
we have $c_{i,\mu}^{m-1}\in\frac{\varpi^{n-d}}{a}\mathcal{O}_{C}$
for all $i$. Moreover, we see that
\begin{equation}
\begin{split}
&c_{0,\mu}^{m-1}\in\frac{\varpi^{n}}{a}\mathcal{O}_{C},\quad\sum_{l=0}^{\left\lfloor \frac{k-j}{q-1}\right\rfloor }c_{j+l(q-1),\mu}^{m}\in\frac{\varpi^{n}}{a}\mathcal{O}_{C}\quad\forall1\le j\le d \\
&c_{i,\mu}^{m-1}\in\frac{\varpi^{n-d}}{a}\mathcal{O}_{C}\quad\forall0\le i\le k
\end{split}
\end{equation}
It remains to establish (\ref{eq:contribution of outer sums}) for
$m$. Looking at the equation for $C_{d,\mu}^{m}$, we see that for
all $\mu$ we have
\[
\varpi^{d}\cdot\sum_{i=d}^{k}{i \choose d}c_{i,[\mu]_{m-1}}^{m-1}[\lambda_{\mu}]^{i-d}-a\cdot c_{d,\mu}^{m}\in\varpi^{n}\mathcal{O}_{C}
\]
since $p\mid{k \choose d}={dq \choose d}$. Fixing $\mu\in I_{m-1}$
and summing over all $\lambda\in\mathbb{F}_{q}$, we get
\[
a\cdot\sum_{\lambda\in\mathbb{F}_{q}}c_{d,\mu+\varpi^{m-1}[\lambda]}^{m}[\lambda]^{l}-\varpi^{d}\cdot\sum_{i=d}^{k}{i \choose d}c_{i,\mu}^{m-1}\sum_{\lambda\in\mathbb{F}_{q}}[\lambda]^{i+l-d}\in\varpi^{n}\mathcal{O}_{C}
\]
for any $l\in\{0,1,2,\ldots,d,q-1\}$.

However, as
\[
\sum_{\lambda\in\mathbb{F}_{q}}[\lambda]^{i}\equiv\begin{cases}
-1 & q-1\mid i,\qquad i\ne0\\
0 & else
\end{cases}\mod p
\]
we obtain
\[
a\cdot\sum_{\lambda\in\mathbb{F}_{q}}c_{d,\mu+\varpi^{m-1}[\lambda]}^{m}[\lambda]^{l}-\varpi^{d}\cdot\sum_{h=1}^{\left[\frac{k-d+l}{q-1}\right]}{d-l+h(q-1) \choose d}c_{d-l+h(q-1),\mu}^{m-1}\in\varpi^{n}\mathcal{O}_{C}
\]
Fix some $l\in\{0,1,\ldots,d\}$. Note that for all $h\le d-l$, one
has
\[
p\mid{d-l+h(q-1) \choose d}={h\cdot q+(d-l-h) \choose d}
\]
This means we have
\begin{equation}
a\cdot\sum_{\lambda\in\mathbb{F}_{q}}c_{d,\mu+\varpi^{m-1}[\lambda]}^{m}[\lambda]^{l}-\varpi^{d}\cdot\sum_{h=d-l+1}^{d}{d-l+h(q-1) \choose d}c_{d-l+h(q-1),\mu}^{m-1}\in\varpi^{n}\mathcal{O}_{C}\label{eq:gather from all lambdas}
\end{equation}
For $l=0$, this already implies
\[
\sum_{\lambda\in\mathbb{F}_{q}}c_{d,\mu+\varpi^{m-1}[\lambda]}^{m}\in\frac{\varpi^{n}}{a}\mathcal{O}_{C}
\]
hence by (\ref{eq:d and dq})
\[
\sum_{\lambda\in\mathbb{F}_{q}}c_{dq,\mu+\varpi^{m-1}[\lambda]}^{m}\in\frac{\varpi^{n}}{a}\mathcal{O}_{C}
\]
For arbitary $l$, we proceed as follows.

Consider the formulas for $C_{0,\mu}^{m-1},C_{1,\mu}^{m-1},\ldots,C_{d,\mu}^{m-1}$.
We obtain as before that $c_{i,\mu}^{m-2}\in\varpi^{n-2d}\mathcal{O}_{C}$
for all $i$.

We may now consider the formulas for $C_{dq-l,\mu}^{m-1},C_{(d-1)q-l+1,\mu}^{m-1},\ldots,C_{(d-l+1)q-1,\mu}^{m-1}$.
Since
\[
(d-l+1)q-1\ge q-1\ge2d
\]
we get
\[
{dq \choose d-l+h(q-1)}\cdot\sum_{\lambda\in\mathbb{F}_{q}}c_{dq,\mu+\varpi^{m-1}[\lambda]}^{m}[\lambda]^{l}+a\cdot c_{d-l+h(q-1),\mu}^{m-1}\in\varpi^{n}\mathcal{O}_{C}
\]
for all $d-l+1\le h\le d$. Substituing back in (\ref{eq:gather from all lambdas}),
we get

\begin{equation}
A \cdot \sum_{\lambda\in\mathbb{F}_{q}}c_{dq,\mu+\varpi^{m}[\lambda]}^{m}[\lambda]^{l}\in\varpi^{n}\mathcal{O}_{C}
\end{equation}
where 
\begin{equation}
A = \left(a+\frac{1}{a}\sum_{h=d-l+1}^{d}\varpi^{d}\cdot{dq \choose d-l+h(q-1)}\cdot{d-l+h(q-1) \choose d}\right)
\end{equation}

Note that $p\mid{dq \choose d-l+h(q-1)}={dq \choose (h-1)q+q+d-l-h}$,
hence
\[
v_{F}\left(\frac{\varpi^{d}}{a}\cdot{dq \choose d-l+h(q-1)}\cdot{d-l+h(q-1) \choose d}\right)\ge d+1-v_{F}(a)
\]
But, as $v_{F}(a)\le1<\frac{1+d}{2}$, it follows that $v_{F}(a)<d+1-v_{F}(a)$,
so that we must have
\[
a\cdot\sum_{\lambda\in\mathbb{F}_{q}}c_{dq,\mu+\varpi^{m}[\lambda]}^{m}[\lambda]^{l}\in\varpi^{n}\mathcal{O}_{C}
\]
as claimed.

Finally, looking at the formulas for $C_{dq}^{m-1},\ldots,C_{d+q-1}^{m-1}$,
we have
\[
{dq \choose d+h(q-1)}\cdot\sum_{\lambda\in\mathbb{F}_{q}}c_{dq,\mu+\varpi^{m-1}[\lambda]}^{m}[\lambda]^{q-1}+a\cdot c_{d+h(q-1),\mu}^{m-1}\in\varpi^{n}\mathcal{O}_{C}
\]
for all $1\le h\le d-1$, and
\[
\sum_{\lambda\in\mathbb{F}_{q}}c_{dq,\mu+\varpi^{m-1}[\lambda]}^{m}+a\cdot c_{dq,\mu}^{m-1}\in\varpi^{n}\mathcal{O}_{C}
\]
Substituting in (\ref{eq:gather from all lambdas}), and recalling
that $\sum_{h=0}^{d}c_{d+h(q-1)}^{m-1}\in\frac{\varpi^{n}}{a}\mathcal{O}_{C}$,
we obtain
\[
\left(a+\frac{1}{a}\cdot\varpi^{d}\sum_{h=0}^{d-1}{d+h(q-1) \choose d}{dq \choose d+h(q-1)}\right) \sum_{\lambda\in\mathbb{F}_{q}}c_{dq,\mu+\varpi^{m-1}[\lambda]}^{m}[\lambda]^{l}\in\varpi^{n}\mathcal{O}_{C}
\]
since $\varpi^{2}\mid\varpi^{d}\cdot{dq \choose d}$.

Since $v_{F}(a)\le1<\frac{d+1}{2}$, this is only possible if $\sum_{\lambda\in\mathbb{F}_{q}}c_{dq,\mu+\varpi^{m-1}[\lambda]}^{m}[\lambda]^{q-1}\in\frac{\varpi^{n}}{a}\mathcal{O}_{C}$.
Therefore, we are done, and $\epsilon=d+v_{F}(a)$ suffices.
\end{proof}

\newpage

\bibliographystyle{amsalpha}
\bibliography{arxivinv}

\newcommand{\etalchar}[1]{$^{#1}$}
\providecommand{\bysame}{\leavevmode\hbox to3em{\hrulefill}\thinspace}
\providecommand{\MR}{\relax\ifhmode\unskip\space\fi MR }
\providecommand{\MRhref}[2]{%
  \href{http://www.ams.org/mathscinet-getitem?mr=#1}{#2}
}
\providecommand{\href}[2]{#2}
\begin{thebibliography}{CEG{\etalchar{+}}13}

\bibitem[BL{\etalchar{+}}94]{barthel1994irreducible}
Laure Barthel, Ron Livne, et~al., \emph{Irreducible modular representations of
  $\mathrm{GL}_2$ of a local field}, Duke Mathematical Journal \textbf{75}
  (1994), no.~2, 261--292.

\bibitem[Bre03a]{breuil2003quelques1}
Christophe Breuil, \emph{Sur quelques repr{\'e}sentations modulaires et
  $p$-adiques de {$GL_2 (\mathbb{Q}_p)$: I}}, Compositio Mathematica
  \textbf{138} (2003), no.~2, 165--188.

\bibitem[Bre03b]{breuil2003quelques2}
\bysame, \emph{Sur quelques representations modulaires et $p$-adiques de {$GL_2
  (\mathbb{Q}_p)$. II}}, JOURNAL-INSTITUTE OF MATHEMATICS OF JUSSIEU \textbf{2}
  (2003), no.~1, 23--58.

\bibitem[BS07]{breuil2007first}
Christophe Breuil and Peter Schneider, \emph{First steps towards $p$-adic
  langlands functoriality}, Journal f{\"u}r die reine und angewandte Mathematik
  (Crelles Journal) \textbf{2007} (2007), no.~610, 149--180.

\bibitem[CDP13]{colmez2013p}
Pierre Colmez, Gabriel Dospinescu, and Vytautas Paskunas, \emph{The $p$-adic
  local langlands correspondence for {$GL_2 (\mathbb{Q}_p)$}}, arXiv preprint
  arXiv:1310.2235 (2013).

\bibitem[CEG{\etalchar{+}}13]{caraiani2013patching}
Ana Caraiani, Matthew Emerton, Toby Gee, David Geraghty, Vytautas Paskunas, and
  Sug~Woo Shin, \emph{Patching and the $p$-adic local langlands
  correspondence}, arXiv preprint arXiv:1310.0831 (2013).

\bibitem[CF00]{colmez2000construction}
Pierre Colmez and Jean-Marc Fontaine, \emph{Construction des
  repr{\'e}sentations $p$-adiques semi-stables}, Inventiones mathematicae
  \textbf{140} (2000), no.~1, 1--43.

\bibitem[Col10]{colmez2010representations}
Pierre Colmez, \emph{Repr{\'e}sentations de {$GL_2 (\mathbb{Q}_p)$ et ($\phi$,
  $\Gamma$)-modules}}, Ast{\'e}risque \textbf{330} (2010), no.~281, 509.

\bibitem[DI13]{de2013existence}
Marco De~Ieso, \emph{Existence de normes invariantes pour {$GL_2$}}, Journal of
  Number Theory \textbf{133} (2013), no.~8, 2729--2755.

\bibitem[E{\etalchar{+}}05]{emerton2005p}
Matthew Emerton et~al., \emph{$ p $-adic {$ L $}-functions and unitary
  completions of representations of $ p $-adic reductive groups}, Duke
  Mathematical Journal \textbf{130} (2005), no.~2, 353--392.

\bibitem[Eme10]{emerton2010local}
Matthew Emerton, \emph{Local-global compatibility in the $p$-adic langlands
  programme for {$GL_2(\mathbb{Q}_p)$}}, preprint (2010).

\bibitem[Hu09]{hu2009normes}
Yongquan Hu, \emph{Normes invariantes et existence de filtrations admissibles},
  Journal f{\"u}r die reine und angewandte Mathematik (Crelles Journal)
  \textbf{2009} (2009), no.~634, 107--141.

\bibitem[Pa{\v{s}}13]{pavskunas2013image}
Vytautas Pa{\v{s}}k{\=u}nas, \emph{The image of colmez montreal functor},
  Publications math{\'e}matiques de l'IH{\'E}S \textbf{118} (2013), no.~1,
  1--191.

\bibitem[Ser80]{serre1980trees}
Jean~Pierre Serre, \emph{Trees}, Springer, 1980.

\bibitem[Sor13]{sorensen2013proof}
Claus~M Sorensen, \emph{A proof of the breuil-schneider conjecture in the
  indecomposable case}, Annals of Mathematics \textbf{177} (2013), 1--16.

\bibitem[ST02]{schneider2002locally}
Peter Schneider and Jeremy Teitelbaum, \emph{Locally analytic distributions and
  $p$-adic representation theory, with applications to {$GL_2$}}, Journal of
  the American Mathematical Society \textbf{15} (2002), no.~2, 443--468.

\bibitem[ST03]{schneider2003algebras}
\bysame, \emph{Algebras of $p$-adic distributions and admissible
  representations}, Inventiones mathematicae \textbf{153} (2003), no.~1,
  145--196.

\bibitem[STUS06]{schneider2006banach}
Peter Schneider, Jeremy Teitelbaum, Das Uferlose, and das~Leben St{\"u}rmende,
  \emph{Banach-hecke algebras and $p$-adic galois representations}, Doc. Math
  (2006), 631--684.

\end{thebibliography}

\end{document}